\newcommand{\excise}[1]{}
\def\graybox(#1,#2){
\x=#1 \y=#2 
\z=\x \t=\y
\advance\z by 10 
\advance\t by 10 
\psframe[fillstyle=solid,fillcolor=lightgray,linewidth=0pt](\x,\y)(\z,\t) 
\psline[linewidth=.5pt](\x,\y)(\x,\t)(\z,\t)(\z,\y)(\x,\y)}
\def\emptygraybox(#1,#2){
\x=#1 \y=#2 
\z=\x \t=\y
\advance\z by 10 
\advance\t by 10 
\psframe[fillstyle=solid,fillcolor=lightgray,linewidth=0pt,linecolor=lightgray](\x,\y)(\z,\t)}
\def\blankbox(#1,#2){
\x=#1 \y=#2 
\z=\x \t=\y
\advance\z by 10 
\advance\t by 10 
\psframe[linewidth=.5pt](\x,\y)(\z,\t)}
\def\whitebox(#1,#2){
\x=#1 \y=#2 
\z=\x \t=\y
\advance\z by 10 
\advance\t by 10 
\psframe[fillstyle=solid,fillcolor=white,linewidth=0pt](\x,\y)(\z,\t) 
\psline[linewidth=.5pt](\x,\y)(\x,\t)(\z,\t)(\z,\y)(\x,\y)}
\def\whiteboxb(#1,#2){
\x=#1 \y=#2 
\z=\x \t=\y
\advance\z by 10 
\advance\t by 10 
\psframe[fillstyle=solid,fillcolor=white,linewidth=0pt](\x,\y)(\z,\t)}
\newcommand{\isom}{\cong}
\newcommand{\exterior}{\textstyle\bigwedge}
\renewcommand{\tilde}{\widetilde}
\renewcommand{\hat}{\widehat}
\renewcommand{\bar}{\overline}
\newcommand{\ZZ}{\mathbb{Z}}
\newcommand{\PP}{\mathbb{P}}
\newcommand{\bS}{\mathbb{S}}
\newcommand{\OO}{\mathcal{O}}
\newcommand{\Fl}{{Fl}}
\newcommand{\Gr}{{Gr}}
\newcommand{\bFl}{\mathbf{Fl}}
\newcommand{\bGr}{\mathbf{Gr}}
\newcommand{\ee}{\mathrm{e}}
\newcommand{\cL}{\mathcal{L}}
\newcommand{\cE}{\mathcal{E}}
\newcommand{\cF}{\mathcal{F}}
\newcommand{\cV}{\mathcal{V}}
\newcommand{\ba}{\bm{a}}
\newcommand{\bb}{\bm{b}}
\newcommand{\bp}{\bm{p}}
\newcommand{\bq}{\bm{q}}
\newcommand{\bOmega}{\bm{\Omega}}
\newcommand{\bW}{\bm{W}}
\newcommand{\pr}{\pi_1}
\newcommand{\prr}{{\pi_2}}
\DeclareMathOperator{\codim}{codim}
\DeclareMathOperator{\rk}{rk}
\DeclareMathOperator{\Pic}{Pic}
\DeclareMathOperator{\ch}{ch}
\DeclareMathOperator{\sgn}{sgn}
\let\originalleft\left
\let\originalright\right
\renewcommand{\left}{\mathopen{}\mathclose\bgroup\originalleft}
\renewcommand{\right}{\aftergroup\egroup\originalright}
\newtheorem{thm}{Theorem}
\newtheorem{theorem}{Theorem}[section]
\newtheorem{lemma}[theorem]{Lemma}
\newtheorem{proposition}[theorem]{Proposition}
\newtheorem{corollary}[theorem]{Corollary}
\newtheorem*{thm*}{Theorem}
\newtheorem*{lem*}{Lemma}
\newtheorem*{prop*}{Proposition}
\newtheorem*{cor*}{Corollary}
\theoremstyle{definition}
\newtheorem{remark}[theorem]{Remark}
\newtheorem{example}[theorem]{Example}
\newtheorem*{defn*}{Definition}
\newtheorem*{rmk*}{Remark}
\font\co=lcircle10
\def\petit#1{{\scriptstyle #1}}
\def\jr{\smash{\raise2pt\hbox{\co \rlap{\rlap{\char'005} \char'007}}
               \raise6pt\hbox{\rlap{\vrule height5pt}}
               \raise2pt\hbox{\rlap{\hskip4pt \vrule height0.4pt depth0pt
                width5.7pt}}
               \raise2pt\hbox{\rlap{\hskip-9.5pt \vrule height.4pt depth0pt
                width6.2pt}}
               \lower6pt\hbox{\rlap{\vrule height4.5pt}}}}
\def\rj{\smash{\raise2pt\hbox{\co \rlap{\rlap{\char'004} \char'006}}
               \raise6pt\hbox{\rlap{\vrule height5pt}}
               \raise2pt\hbox{\rlap{\hskip4pt \vrule height0.4pt depth0pt
                width5.7pt}}
               \raise2pt\hbox{\rlap{\hskip-9.5pt \vrule height.4pt depth0pt
                width6.2pt}}
               \lower6pt\hbox{\rlap{\vrule height4.5pt}}}}
\def\je{\smash{\raise2pt\hbox{\co \rlap{\rlap{\char'005}
                \phantom{\char'007}}}\raise6pt\hbox{\rlap{\vrule height5pt}}
               \raise2pt\hbox{\rlap{\hskip-9.5pt \vrule height.4pt depth0pt
                width6.2pt}}}}
\def\ej{\smash{\raise2pt\hbox{\co \rlap{\rlap{\char'004}\phantom{\char'006}}}
               \raise2pt\hbox{\rlap{\hskip-9.5pt \vrule height.4pt depth0pt
                width6.2pt}}
               \lower6pt\hbox{\rlap{\vrule height4.5pt}}}}
\def\er{\smash{\raise2pt\hbox{\co \rlap{\rlap{\phantom{\char'005}} \char'007}}
               \raise2pt\hbox{\rlap{\hskip4pt \vrule height0.4pt depth0pt
                width5.7pt}}
               \lower6pt\hbox{\rlap{\vrule height4.5pt}}}}
\def\re{\smash{\raise2pt\hbox{\co \rlap{\rlap{\phantom{\char'004}} \char'006}}
               \raise6pt\hbox{\rlap{\vrule height5pt}}
               \raise2pt\hbox{\rlap{\hskip4pt \vrule height0.4pt depth0pt
                width5.7pt}}}}
\def\+{\smash{\lower6pt\hbox{\rlap{\vrule height17pt}}
                \raise2pt
                \hbox{\rlap{\hskip-9pt \vrule height.4pt depth0pt
                width18.7pt}}}}
\def\hor{\smash{\raise2pt\hbox{\rlap{\hskip-9.5pt \vrule height.4pt depth0pt
                width19.2pt}}}}
\def\ver{\smash{\lower6pt\hbox{\rlap{\vrule height17pt}}}}
\def\ho{\smash{\hbox{\rlap{\vrule height5pt}}
                \raise2pt
                \hbox{\rlap{\hskip-9pt \vrule height.4pt depth0pt
                width18.7pt}}}}
\def\perm#1#2{\hbox{\rlap{$\petit {#1}_{\scriptscriptstyle #2}$}}%
                \phantom{\petit 1}}
\def\textcross{\ \smash{\lower4pt\hbox{\rlap{\hskip4.15pt\vrule height14pt}}
                \raise2.8pt\hbox{\rlap{\hskip-3pt \vrule height.4pt depth0pt
                width14.7pt}}}\hskip12.7pt}
\def\textelbow{\ \hskip.1pt\smash{\raise2.75pt%
               \hbox{\co \hskip 4.15pt\rlap{\rlap{\char'005} \char'007}
                \lower6.8pt\rlap{\vrule height3.5pt}
                \raise3.6pt\rlap{\vrule height3.5pt}}
                \raise2.8pt\hbox{%
                  \rlap{\hskip-7.15pt \vrule height.4pt depth0pt width3.5pt}%
                  \rlap{\hskip4.05pt \vrule height.4pt depth0pt width3.5pt}}}
                \hskip8.7pt}
\begin{document}

\title[K-classes of Brill-Noether loci and a determinantal formula]{K-classes of Brill-Noether loci\\ and a determinantal formula}


\author{Dave Anderson}
\address{Dave Anderson
\newline \indent Department of Mathematics, The Ohio State University  
\newline \indent Columbus, OH 43210}
\email{anderson.2804@math.osu.edu}

\author{Linda Chen}
\address{Linda Chen
\newline \indent Department of Mathematics and Statistics, Swarthmore College 
\newline \indent  Swarthmore, PA 19081}
\email{lchen@swarthmore.edu}

\author{Nicola Tarasca}
\address{Nicola Tarasca
\newline \indent Department of Mathematics \& Applied Mathematics
\newline \indent Virginia Commonwealth University, Richmond, VA 23284}
\email{tarascan@vcu.edu}

\thanks{DA was partially supported by NSF Grant DMS-1502201.}

\subjclass[2010]{14H51, 14M15 (primary),  19E20, 05E05 (secondary)}
\keywords{Brill-Noether loci, K-theory classes, determinantal formulas, Young diagrams, \linebreak \indent double Schubert and Grothendieck polynomials}

\begin{abstract}
We compute the Euler characteristic of the structure sheaf of the Brill-Noether locus of linear series with special vanishing at up to two marked points.  
When the Brill-Noether number $\rho$ is zero, we recover the Castelnuovo formula for the number of special linear series on a general curve; when $\rho=1$, we recover the formulas of Eisenbud-Harris, Pirola, and Chan-Mart\'in-Pflueger-Teixidor for the arithmetic genus of a Brill-Noether curve of special divisors.

These computations are obtained as applications of a new determinantal formula for the K-theory class of certain degeneracy loci.  
Our degeneracy locus formula also specializes to new determinantal expressions for the double Grothendieck polynomials corresponding to 321-avoiding permutations, and gives double versions of the {flagged skew Grothendieck polynomials} recently introduced by Matsumura.  Our result extends the formula of Billey-Jockusch-Stanley expressing Schubert polynomials for 321-avoiding permutations as generating functions for flagged skew tableaux.
\end{abstract}

\maketitle



Given a smooth projective curve $C$ of genus $g$ over an algebraically closed field, the classical {\it Brill-Noether theorem} describes the locus of special line bundles
\[
  W^r_d(C) = \left\{L \in \Pic^d(C) \,|\, h^0(C,L)\geq r+1 \right\}.
\]
A parameter count --- reviewed at the end of this introduction --- estimates the dimension of $W^r_d(C)$ as $\rho = \rho(g,r,d) := g - (r+1)(g-d+r)$, and the Brill-Noether theorem states that when $C$ has general moduli, the locus $W^r_d(C)$ is in fact nonempty of dimension $\rho$ whenever $\rho\geq 0$.  
A connection with degeneracy loci for maps of vector bundles was implicit in the original work by Brill and Noether, and was brought into focus by Kleiman and Laksov in one of the several modern proofs of the theorem given in the 1970s.

In this article we prove two main theorems.  The first gives a formula for the holomorphic Euler characteristic (that is, the arithmetic genus) 
of the Brill-Noether locus --- and in fact, for the generalized Brill-Noether loci parametrizing linear series having specific vanishing profiles at one or two points.  Our results extend the classical computation by Castelnuovo, who studied the zero-dimensional case $\rho=$~$0$; Eisenbud-Harris \cite{eh} and Pirola \cite{p}, who studied the case $\rho=1$; and Chan-Mart\'in-Pflueger-Teixidor \cite{clpt}, whose remarkable computation uses the combinatorics of tableaux and the geometry of limit linear series to treat the case when the two-pointed locus is one-dimensional.  As a by-product of our formulas, we obtain a new proof of an existence criterion for special linear series, originally due to Osserman.

Our genus formulas are deduced from the second main theorem of the article: a new \textit{determinantal} formula for the K-theory class of a certain type of degeneracy loci.  These loci arise naturally not only from the Brill-Noether problem, but also in combinatorics --- they are built from a  class of permutations called {\it 321-avoiding permutations}.  As another application of our degeneracy locus formula, we find new determinantal formulas for families of polynomials occurring in algebraic combinatorics known as the {\it double Schubert} and {\it double Grothendieck polynomials}.  These results extend recent work of Matsumura \cite{matsumura}, Hudson-Matsumura \cite{hm}, and Hudson-Ikeda-Matsumura-Naruse \cite{himn, himn2}.

Another goal of this work is to highlight the connection between recent developments in Schubert calculus and the geometry of curves.  The results of this paper expand on the fruitful interactions which led to the growth of both subjects, as discussed extensively  in \cite{acgh}.  On one hand, an approach to linear series via degeneracy loci unifies, and perhaps simplifies, several results in Brill-Noether theory --- for example, one may compute the Euler characteristic of a one-pointed Brill-Noether locus by applying the determinantal formula of \cite{himn}.  On the other hand, constructions arising in the study of linear series led us to the geometric proof of the general determinantal formula presented in \S\ref{determinantalF}.  It seems natural to expect that further progress can be made in both subjects by exploiting this bridge.

We now turn to more precise statements of the main results.  The locus $W^r_d(C)$ of special line bundles on a smooth curve $C$ has a canonical desingularization by the variety of linear series $G^r_d(C)$, which parametrizes pairs $\ell=(L,V)$ with $L\in\Pic^d(C)$ and $V\subseteq H^0(C,L)$ an $(r+1)$-dimensional subspace.  For a given linear series $\ell$ and a point $P\in C$, the {\it vanishing sequence} of $\ell$ at $P$ is the sequence
\[
  \ba^\ell(P) = \left(0\leq a^\ell_0(P)<a^\ell_1(P)<\cdots< a^\ell_r(P)\leq d \right)
\]
of distinct orders of vanishing of sections in $V$ at $P$.

The {\it two-pointed Brill-Noether locus} is defined as follows.  Fix two  points $P$ and $Q$ on a smooth curve $C$.  Given sequences of integers
\begin{align*}
  \bm{a} &= (0\leq a_0<a_1<\cdots <a_{r}\leq d) \quad \text{and} \\
  \bm{b} &= (0\leq b_0<b_1<\cdots <b_{r}\leq d),
\end{align*}
we wish to parametrize linear series $\ell$ of projective dimension $r$ and degree $d$ on $C$ with $\ba^\ell(P)$ dominating $\ba$, and $\ba^\ell(Q)$ dominating $\bb$.  That is,
\begin{align*}
 G^{\bm{a},\bm{b}}_d(C,P,Q) := \left\{ \ell \in G^r_d(C)\,|\, a^\ell_i(P) \geq a_i \text{ and } a^\ell_i(Q) \geq b_i \text{ for all } 0\leq i\leq r \right\}.
\end{align*}

We will require the following nontrivial fact about curves as input.
The {\em two-pointed Brill-Noether theorem} says that
for a \textit{general} two-pointed curve $(C,P,Q)$ of genus $g$, the Brill-Noether locus $G^{\bm{a},\bm{b}}_d(C,P,Q)$ is either empty or has dimension equal to the {\em two-pointed Brill-Noether number}:
\[
  \rho:=\rho(g,r,d,\bm{a},\bm{b}) 
    = g - \sum_{i=0}^{r} (g-d + a_{i}+b_{r-i}). 
\]
This was first proved by Eisenbud and Harris using limit linear series and a construction on a singular curve \cite[\S1]{eh}.  More recently, explicit examples of smooth two-pointed curves satisfying the two-pointed Brill-Noether theorem in any genus have been constructed, by studying curves on decomposable elliptic ruled surfaces \cite[\S 2]{ft}.  In contrast to the situation with $G^r_d(C)$, the condition $\rho\geq 0$ is not sufficient to guarantee that the pointed locus $G^{\bm{a},\bm{b}}_d(C,P,Q)$ is nonempty.  A numerical criterion for nonemptiness was given by Osserman \cite{o}, and also follows from our results, see Proposition \ref{non-emptiness}.

Our first main theorem computes the holomorphic (sheaf) Euler characteristic of the locus $G^{\bm{a},\bm{b}}_d(C,P,Q)$ when this has expected dimension $\rho$.  To state it, we need some more notation.  Given sequences $\bm{a}$ and $\bm{b}$ as above, we define two partitions $\lambda$ and $\mu$ by setting
\begin{align*}
 \lambda_i &= n+ a_{r+1-i}-(r+1-i)  , \;\text{ and }\\
 \mu_i &= n-b_{i-1}+i-1 -g+d-r
\end{align*}
for $1\leq i\leq r+1$, where $n$ is a fixed, sufficiently large nonnegative integer.

Partitions are commonly represented as {\it Young diagrams}, so $\lambda$ is a collection of boxes with $\lambda_i$ boxes in the $i$-th row.  When $\mu_i\leq \lambda_i$ for all $i$, one has $\mu\subseteq \lambda$, and one represents the sequence $\lambda_i-\mu_i$ as a {\it skew Young diagram} $\lambda/\mu$ (the complement of $\mu$ in $\lambda$).  Borrowing this notation, we will write $|l/m| = \sum_{i=1}^{r+1} (l_i-m_i)$ for any sequences of integers $l$ and $m$ of length $r+1$, regardless of whether $l_i-m_i\geq 0$.

\begin{thm}\label{t.BN}
Let $(C,P,Q)$ be a smooth two-pointed curve of genus $g$.
If $G:=G^{\bm{a},\bm{b}}_d(C,P,Q)$ has dimension equal to $\rho$, then its Euler characteristic is
\begin{align}
\label{e.t-main}
  \chi\left(\OO_{G}\right)= \sum_{ l, m } \left( \prod_{i=1}^{r+1} \binom{\mu_i}{\mu_i-m_i}\binom{-\lambda_i}{l_i-\lambda_i} \right) g! \left| \frac{1}{(l_i-m_j +j-i)!} \right|_{1\leq i,j\leq r+1}
\end{align}
the sum being taken over all nonnegative integer sequences $l$ and $m$ such that $m_i \leq   \mu_i$ and $l_i \geq  \lambda_i$ for all $i$, and such that $|l/m| = |\lambda/\mu|+\rho$.
\end{thm}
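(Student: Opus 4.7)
The plan is to realize $G = G^{\bm{a},\bm{b}}_d(C,P,Q)$ as a degeneracy locus in a Grassmann bundle over $\Pic^d(C)$ and then apply the K-theoretic determinantal formula for 321-avoiding loci from \S\ref{determinantalF}. Choose an effective divisor $D$ on $C$ of sufficiently large degree containing $P$ and $Q$ with high multiplicity, and let $\cU = \pi_*\mathcal{P}(D)$ be the pushforward of the twisted Poincaré bundle to $\Pic^d(C)$; for $\deg D$ large this is a vector bundle, and $\pi_*(\mathcal{P}(D)|_D)$ provides a quotient whose jets at $P$ and $Q$ supply two filtrations of $\cU$. Inside $\bGr(r+1,\cU)$, the variety $G^r_d(C)$ is the locus where the tautological subbundle $\mathcal{S}\subset\cU$ maps to zero in the quotient, and the conditions $a^\ell_i(P)\geq a_i$ and $a^\ell_i(Q)\geq b_i$ refine this into rank inequalities between $\mathcal{S}$ and a decreasing flag coming from $P$ (encoding $\bm{a}$) and an increasing flag coming from $Q$ (encoding $\bm{b}$). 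With the indexing chosen so that $\lambda$ and $\mu$ record the jumps of these flags, the incidence conditions are exactly those defining the degeneracy locus for a 321-avoiding permutation of shape $\lambda/\mu$.

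The determinantal formula of \S\ref{determinantalF} then expresses $[\OO_G]$ in $K(\bGr(r+1,\cU))$ as a determinant whose entries are universal K-theoretic polynomials in the Chern classes of $\mathcal{S}$, $\cU$, and the two flags. Pulling back to $\Pic^d(C)$ and pushing forward to a point, I would use the classical computation $\ch(\cU) = r_0 \, e^{-\theta}$ of rank $r_0=d-g+1+\deg D$ (with $\theta$ the theta class on $\Pic^d(C)$), so that each Chern class of $\cU$ becomes an explicit polynomial in $\theta$; the Chern character of $\pi_*(\mathcal{P}(D)|_D)$ is trivial since this bundle pulls back from $D$. Because $\dim G=\rho$, only the codimension-$g$ component of $[\OO_G]$ survives, and using $\int_{\Pic^d(C)}\theta^g/g!=1$ extracts a single rational number from each term of the determinant expansion.

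Expanding the determinant entry-by-entry and collecting powers of $\theta$ produces the asserted sum. The factorial-inverse determinant $|1/(l_i-m_j+j-i)!|$ arises from the Jacobi--Trudi-type part of the formula after the substitution of $e^{-\theta}$ Chern characters, while the binomial coefficients $\binom{\mu_i}{\mu_i-m_i}$ and $\binom{-\lambda_i}{l_i-\lambda_i}$ record the contributions of the $(1-e^{-\theta})^{\pm k}$ factors that appear in the K-theoretic Chern class expansions; the constraint $|l/m|=|\lambda/\mu|+\rho$ is precisely the condition that a given monomial has total degree $g$ in $\theta$, so that it survives the pushforward.

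The main obstacle is to set up the degeneracy locus so that it falls exactly into the 321-avoiding framework of Theorem~\ref{determinantalF}: the two-sided filtrations at $P$ and $Q$ must be transverse in the appropriate sense, and $[\OO_G]$ must agree with the formal class produced by the determinantal formula. The latter is a Cohen--Macaulayness issue that follows from the assumption $\dim G = \rho$. Once this is in place, the remaining bookkeeping of Chern character expansions on the Jacobian, while delicate, is essentially mechanical and generalizes the classical Castelnuovo and Eisenbud--Harris calculations to the K-theoretic setting.
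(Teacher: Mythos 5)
Your strategy is the same as the paper's: realize $G^{\ba,\bb}_d(C,P,Q)$ as a degeneracy locus $\Omega_{\bp,\bq}$ in a Grassmann bundle over $\Pic^d(C)$ (the paper takes the subbundle flag $\cE_j = \prr_*(\cL\otimes\pr^*\OO_C(nP - b_{j-1}Q))$ and the quotient flag $\cF_i = \prr_*(\cL\otimes\pr^*\OO_{(n+a_{r+1-i})P})$, packaged into $\cV=\cE\oplus\cF$ so that Theorem~\ref{t.deg} applies verbatim), then push forward to a point via Hirzebruch--Riemann--Roch using the triviality of the Todd class of the Jacobian and $\int\theta^g=g!$. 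However, two points in your outline are wrong or missing as stated.

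First, the classical computation is $\ch(\cE)=\rk(\cE)-\theta$, equivalently $c_k(-\cE)=\theta^k/k!$; it is not $\rk(\cE)\cdot e^{-\theta}$, which would introduce nonzero higher Chern character components and the wrong multiple of $\theta$ in degree one, and would not reproduce the factorial determinant. Second, and more substantively, the determinantal formula expresses the class as a polynomial in the \emph{K-theoretic} Chern classes $c^K_k(\cF_i-\cE_j)$, so before applying HRR one must prove that $\ch\bigl(c^K_k(-\cE_j)\bigr)=\theta^k/k!$ exactly; this is the content of Lemmas~\ref{l.ch} and~\ref{l.powers}, which exploit the vanishing of $\ch_i(\cE_j)$ for $i>1$ together with Newton's identities, and it is precisely the step at which the $(1-e^{-\theta})$ factors you mention collapse to plain powers of $\theta$. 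Consequently your attribution of the binomial coefficients to those factors is a misidentification: in the paper they arise instead from expanding the index-shift operators $(1-\beta T)^{-\lambda_i+\mu_j}$ in $\Delta_{\lambda/\mu}(c;\beta)$ at $\beta=-1$, i.e.\ from applying $(1+T)^{\mu_j}$ to the columns and $(1+T)^{-\lambda_i}$ to the rows of the cohomological matrix. With these two corrections the bookkeeping goes through exactly as you describe, and the constraint $|l/m|=|\lambda/\mu|+\rho$ is indeed the selection of the degree-$g$ component surviving the integral over $\Pic^d(C)$.
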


The proof is given in \S\ref{Euler}.  
In the statement, the binomial coefficients for a negative integer $-s$ are given by $ \binom{-s}{k}=\frac{-s(-s-1)\cdots(-s-k+1)}{k!}=(-1)^k\binom{s+k-1}{k}$, for $k\geq 0$.
Also, the sequences $l$ and $m$ need not be partitions, and even when they are, $l/m$ need not be a skew Young diagram --- indeed, $\lambda/\mu$ itself may not be skew. However, with a more detailed combinatorial analysis, one can rewrite the formula so that terms where $l$ and $m$ are partitions are the only ones which contribute to the sum --- see Theorem~\ref{t.euler-two-pointed-tableau}.

\medskip

We now turn to the degeneracy locus formulas.  Hudson-Ikeda-Matsu\-mu\-ra-Naruse gave a determinantal formula for the K-theory class of the structure sheaf of a Schubert variety in a Grassmann bundle \cite{himn}, and this formula may be applied to obtain the one-pointed case of Theorem~\ref{t.BN}.  The formula of \cite{himn} was subsequently refined in \cite{a} and \cite{hm}, but the loci considered by these authors are not sufficient to compute the class of a two-pointed Brill-Noether variety --- so we require a new determinantal formula in K-theory, which is of independent interest.  A special case of our formula is related algebraically to a formula of Matsumura \cite{matsumura}.

Here is the general setup.  Given decreasing sequences of integers $\bp$ and $\bq$, consider vector bundles
\[
E_{p_t}\hookrightarrow \cdots \hookrightarrow E_{p_1} \xrightarrow{\varphi} F_{q_1} \twoheadrightarrow \cdots \twoheadrightarrow F_{q_t}
\]
on a nonsingular variety $X$, with the ranks indicated by subscripts.  The degeneracy locus is
\[
   W_{\bp,\bq} = \left\{ x\in X\,|\, \dim\ker( E_{p_j} \to F_{q_i}) \geq 1+i-j \text{ for all }i,j\right\}.
\]
From the data $\bp,\bq$, we define partitions $\lambda$ and $\mu$ by
\begin{align*}
  \lambda_i &= q_i-t+i, &
  \mu_j &= p_{j}-(t+1-j).
\end{align*}
These partitions are related to the ones associated to the Brill-Noether loci; see the discussion at the end of this introduction for a special case, and \S\ref{GandW} for more detail.

In order for the rank conditions defining $W_{\bp,\bq}$ to be feasible and nontrivial, one should require $\lambda_i\geq \mu_i$, so that $\lambda/\mu$ forms a skew Young diagram.  The expected codimension of the locus $W_{\bp,\bq}$  equals  $|\lambda/\mu|$.

We compute the class of $W_{\bp,\bq}$ as a variation of a skew Schur determinant.  Given partitions $\lambda = (\lambda_1\geq \cdots \geq \lambda_t)$ and $\mu = (\mu_1\geq \cdots \geq \mu_t)$, and doubly indexed series $c(i,j)=\sum_{m\geq 0}c_m(i,j)$, for $1\leq i,j\leq t$, let us define the determinant
\[
  \Delta_{\lambda/\mu}(c;\beta) := \left| \sum_{k\geq 0} \binom{\lambda_i-\mu_j+k-1}{k}\beta^k c_{\lambda_i-\mu_j+j-i+k}(i,j) \right|_{1\leq i,j\leq t}.
\]
The notation for the entries of this determinant can be condensed by using the operator $T$ which raises the index of $c(i,j)$, so $T^k\cdot c_m(i,j) = c_{m+k}(i,j)$.  Then
\[
  \Delta_{\lambda/\mu}(c;\beta) = \left| (1-\beta T)^{-\lambda_i+\mu_j} c_{\lambda_i-\mu_j+j-i}(i,j) \right|_{1\leq i,j\leq t}.
\]
When $\beta=0$ and $c(i,j) = \prod_{k=1}^n (1-x_k)^{-1}$ for all $i,j$, this is the classical Jacobi-Trudi formula for the skew Schur function $s_{\lambda/\mu}(x)$.

\begin{thm}\label{t.degloci}
Assume that $\lambda_i-\mu_i\geq 0$ for all $i$, and that $W:=W_{\bp,\bq}$ has codimension $|\lambda/\mu|$.  The class of $W$ in the Grothendieck group of coherent sheaves $K_\circ(X)$ is
\[
  \left[\OO_W\right] = \Delta_{\lambda/\mu}( c;-1 )\cdot [\OO_X],
\]
where $c(i,j) = c^K(F_{q_i}-E_{p_{j}})$ is the K-theoretic Chern class.
\end{thm}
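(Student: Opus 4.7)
The plan is to compute $[\OO_W]$ by constructing a Kempf--Laksov-style resolution $\pi \colon Z \to W = W_{\bp,\bq}$ as an iterated tower of Grassmann bundles over $X$, and then pushing the structure sheaf forward via K-theoretic Gysin maps. Concretely, $Z$ parametrizes nested subbundles $K_1 \supseteq K_2 \supseteq \cdots \supseteq K_t$ with $K_j \subset E_{p_j}$ of rank prescribed by the skew shape $\lambda/\mu$, each mapping to zero in an appropriate quotient of $F_{q_1}$; the nested rank conditions cut out $Z$ as a smooth subvariety of a tower of relative Grassmannians, and the projection $\pi \colon Z \to W$ is birational and surjective.

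A standard Kempf--Laksov argument shows that $\pi$ is a rational resolution: its fibers over $W$ are (products of) smaller Grassmannians, whose higher structure-sheaf cohomology vanishes, so $R\pi_* \OO_Z = \OO_W$ and hence $[\OO_W] = \pi_*[\OO_Z]$ in $K^\circ(X)$. Since $[\OO_Z]$ is expressible in terms of the tautological subbundles $K_j$ and the pullbacks of $E_{p_j}$ and $F_{q_i}$, the pushforward can be computed iteratively, one Grassmann bundle at a time. At each step one invokes the K-theoretic Gysin formula for a Grassmann bundle --- developed by Buch and refined in \cite{himn} and \cite{hm} --- to produce a determinantal residue in the Chern roots.

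The main obstacle is the combinatorial identification of the resulting iterated expression with the single Jacobi--Trudi-type determinant $\Delta_{\lambda/\mu}(c;-1)$. Each Grassmann pushforward step contributes a geometric correction of the form $\sum_{k \geq 0} \binom{\lambda_i - \mu_j + k - 1}{k}(-T)^k = (1+T)^{-\lambda_i + \mu_j}$ indexed by a row $i$ and column $j$ of the shape, and these corrections must align so that the $(i,j)$-entry of the final determinant reads $(1-\beta T)^{-\lambda_i + \mu_j}\, c_{\lambda_i - \mu_j + j - i}(i,j)$ at $\beta = -1$, with $c(i,j) = c(F_{q_i} - E_{p_j})$. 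In the $\beta = 0$ specialization this reduces to the classical Kempf--Laksov/Lascoux--Pragacz formula for vexillary degeneracy loci in cohomology; the present K-theoretic refinement extends the one-sided (Grassmann bundle) strategy of Matsumura and Hudson--Matsumura to the two-sided chain of maps considered here, and the careful bookkeeping of these higher Chern-class corrections across the tower is the crux of the argument.
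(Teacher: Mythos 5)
Your overall strategy --- resolve the locus, push the structure sheaf forward, and organize the answer as a Jacobi--Trudi-type determinant --- is the same one the paper follows, but the proposal has a genuine gap exactly where you place ``the crux of the argument'': the derivation of $\Delta_{\lambda/\mu}(c;-1)$ from the iterated pushforward is asserted, not carried out, and it is not clear the route you describe can produce it. The Grassmann-bundle Gysin formulas you invoke (Buch, \cite{himn}, \cite{hm}) handle \emph{one-sided} flag conditions; the whole difficulty of $W_{\bp,\bq}$ is the interleaving of the subbundle conditions on the $E_{p_j}$ with the quotient conditions on the $F_{q_i}$, which is precisely why the paper says the existing formulas ``are not quite sufficient.'' The paper's proof (\S\ref{determinantalF}) does not perform an iterated two-sided bookkeeping at all: it lifts $W$ to a locus $\Omega$ in a single Grassmann bundle $\bGr(t,E\oplus F)$, factors $\Omega=\Omega'\cap\pi_2^{-1}\Omega_\lambda$ with $\Omega_\lambda$ carrying all the $F$-conditions and $\Omega'$ all the $E$-conditions, quotes the known one-sided determinantal formula of \cite{a} for $[\Omega_\lambda]$ in terms of $c(F_{q_i}-\bS)$, and then pushes down through a tower of \emph{projective} bundles resolving $\Omega'$, using a formal determinantal identity to replace $\bS$ by $\bS_{t+1-j}$ in the $j$-th column so that each column can be integrated independently; this is what converts the exponent $-\lambda_i$ into $-\lambda_i+\mu_j$ entry by entry. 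Some device of this kind is required for the corrections in each entry to close up into $(1-\beta T)^{-\lambda_i+\mu_j}c_{\lambda_i-\mu_j+j-i}(i,j)$, and your sketch supplies none.

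A secondary issue: for an arbitrary (Cohen--Macaulay) base $X$, the claims that $\pi\colon Z\to W$ is birational and that $R\pi_*\OO_Z=\OO_W$ do not follow from cohomology vanishing on Grassmannian fibers. One must first establish everything in the universal case --- where $W$ is a Schubert variety in a flag bundle and hence has rational singularities, and where the paper also needs a flatness argument plus Elkik's theorem to show the lifted locus has rational singularities (Proposition~\ref{p.fibers}) --- and then transfer both the scheme structure and the K-class to $X$ by a pullback lemma proved with a Koszul-complex/Tor computation (Lemma~\ref{l.pullback}). Without that reduction, nothing guarantees that the generic point of each component of $W\subseteq X$ lies in the open stratum, so the birationality of your resolution is not automatic even when $W$ has the expected codimension.
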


This is proved in \S\ref{determinantalF}, as part (\ref{t.deg-part2}) of Theorem~\ref{t.deg}.  Part (\ref{t.deg-part1}) of Theorem~\ref{t.deg} provides a more general statement needed for the proof of Theorem~\ref{t.BN}.  In fact, all the formulas we prove take place in the {\em connective K-theory} of $X$, a module over $\ZZ[\beta]$ which interpolates K-theory (at $\beta=-1$) and Chow groups (at $\beta=0$).  So our formulas also specialize directly to cohomology.  Moreover, in Theorem~\ref{t.deg} we remove the assumption that $X$ is smooth, allowing rational singularities.

There is a general correspondence between degeneracy loci and permutations, as explained in \cite{fulton-flags}, for example.  Our loci $W_{\bp,\bq}$ are exactly those corresponding to {\it 321-avoiding permutations}, i.e., permutations with no decreasing subsequence of length three.  Under this correspondence, the formulas for general degeneracy loci are related to the (double) Schubert polynomials and Grothendieck polynomials of Lascoux and Sch\"utzenberger.  Our K-theoretic results therefore give new determinantal formulas for the double Grothendieck polynomials of 321-avoiding permutations, extending work by Matsumura \cite{matsumura}.  Specializing to cohomology, we recover formulas of Billey-Jockusch-Stanley \cite{bjs} and Chen-Li-Louck \cite{cll}, giving new proofs via geometry.  The details, including the correspondence between $(\bp,\bq)$ and 321-avoiding permutations, are described in \S\ref{Schub}.

In \S\ref{Exs}, we explain how our results can be phrased in terms of the combinatorics of tableaux.  A \emph{row semi-standard Young tableau} on a skew diagram $\lambda/\mu$ is a filling of the boxes of $\lambda/\mu$ whose entries are strictly increasing along rows and weakly decreasing down columns.  A \emph{strict Young tableau} is a filling whose entries are strictly increasing across each row and down each column.  A \emph{standard Young tableau} is a strict Young tableau using the numbers $1,\ldots,|\lambda/\mu|$.

The number of standard Young tableaux  on a skew shape $\lambda/\mu$ is denoted by $f^{\lambda/\mu}$.  We will use $\alpha^{\lambda/\mu}$ to denote the number of row semi-standard Young tableaux on $\lambda/\mu$ whose entries in row $i$ are in $\{1,\ldots,\lambda_i\}$; and $\zeta^{\lambda/\mu}$ for the number of strict Young tableaux whose entries in row $i$ are in $\{1,\ldots,\lambda_i-1\}$.

\begin{thm}
\label{t.euler-two-pointed-tableau} If $\dim G^{\bm{a},\bm{b}}_d(C,P,Q)=\rho$, then the Euler characteristic is
\begin{align*}
\chi\left(\mathcal{O}_{G^{\bm{a},\bm{b}}_d(C,P,Q)}\right) 
&=\sum_{\lambda^+,\mu^-} (-1)^{|\lambda^+/\lambda|} \cdot \alpha^{\mu/\mu^-} \cdot \zeta^{\lambda^+/\lambda} \cdot f^{\lambda^+/\mu^-}
\end{align*}
where the sum is  over partitions $\mu^-\subseteq \mu$ and $\lambda^+\supseteq \lambda$ of length $r+1$ such that $|\lambda^+/\mu^-|=|\lambda/\mu|+\rho$. 
\end{thm}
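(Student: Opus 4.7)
The plan is to deduce Theorem \ref{t.euler-two-pointed-tableau} from Theorem \ref{t.BN} by reindexing the sum over arbitrary integer sequences $l,m$ by pairs of partitions $\lambda^+,\mu^-$, and then evaluating the resulting coefficients via Jacobi-Trudi-type identities.

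I would first verify, directly from the definitions of $\lambda,\mu$, that $|\lambda/\mu|+\rho = g$ (the terms involving $n$, $r$, and $d$ cancel). By Aitken's determinantal formula, $g!\det\bigl(1/(\lambda^+_i-\mu^-_j+j-i)!\bigr) = f^{\lambda^+/\mu^-}$ for any partitions $\lambda^+,\mu^-$ of length $r+1$ with $|\lambda^+/\mu^-|=g$, the right-hand side vanishing if $\mu^-\not\subseteq\lambda^+$. Now set $L_i=l_i-i$ and $M_j=m_j-j$: the determinant in Theorem \ref{t.BN} vanishes unless the $L_i$ and the $M_j$ are each pairwise distinct, and then unique permutations $\tau_L,\tau_M$ sort them in strictly decreasing order, giving partitions $\lambda^+,\mu^-$. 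By the antisymmetry of the determinant under row/column permutations, together with the identity $\binom{-\lambda_i}{l_i-\lambda_i}=(-1)^{l_i-\lambda_i}\binom{l_i-1}{l_i-\lambda_i}$ and the fact that $\sum l_i=|\lambda^+|$ depends only on the sorted sequence, one factors out a global sign $(-1)^{|\lambda^+/\lambda|}$ and regroups. This transforms Theorem \ref{t.BN} into
\[
\chi(\OO_G) = \sum_{\lambda^+,\mu^-}(-1)^{|\lambda^+/\lambda|}\, A(\lambda^+)\, B(\mu^-)\, f^{\lambda^+/\mu^-},
\]
where the sum runs over partitions of length $r+1$ with $|\lambda^+/\mu^-|=g$, and the coefficients are
\[
A(\lambda^+) = \det\!\left(\binom{\lambda^+_j+i-j-1}{\lambda^+_j-\lambda_i+i-j}\right)_{\!i,j},\quad B(\mu^-) = \det\!\left(\binom{\mu_j}{\mu_j-\mu^-_i+i-j}\right)_{\!i,j}.
\]

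The main combinatorial step is then to identify $A(\lambda^+)=\zeta^{\lambda^+/\lambda}$ and $B(\mu^-)=\alpha^{\mu/\mu^-}$. These are flagged Jacobi-Trudi-type identities, to be proved by the Lindstr\"om-Gessel-Viennot lemma: each binomial $\binom{n}{k}$ counts lattice paths with $k$ North-steps in a rectangle of width determined by the flag, so each determinant is the signed sum over tuples of paths from fixed sources to fixed sinks, and the standard sign-reversing involution on path crossings cancels all crossing configurations. The surviving non-crossing tuples match exactly the tableaux counted by $\alpha$ and $\zeta$: the weakly decreasing column condition for $\alpha^{\mu/\mu^-}$ arises from the unrestricted flag $\{1,\ldots,\mu_i\}$, while the strictly increasing column condition for $\zeta^{\lambda^+/\lambda}$ is forced by the shifted flag $\{1,\ldots,\lambda^+_i-1\}$.

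The hard part is this final step: selecting the correct LGV source/sink configurations for each tableau count and verifying that the non-crossing condition produces precisely the intended column condition (weak for $\alpha$, strict for $\zeta$). The shift by $-1$ in the flag for $\zeta$ is exactly what toggles between the two settings, so particular care is needed in the path setup to track it. Once these two Jacobi-Trudi-type identities are in hand, combining them with the reorganization above yields Theorem \ref{t.euler-two-pointed-tableau}.
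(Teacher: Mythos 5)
Your proposal is correct and follows essentially the same route as the paper: rewrite $\binom{-\lambda_i}{l_i-\lambda_i}=(-1)^{l_i-\lambda_i}\binom{l_i-1}{l_i-\lambda_i}$, sort $l$ and $m$ into partitions $\lambda^+\supseteq\lambda$ and $\mu^-\subseteq\mu$ using the antisymmetry of the determinant, and collect terms into the binomial determinants (your $A(\lambda^+)$ and $B(\mu^-)$ are transposes of the paper's $\zeta^{\lambda^+/\lambda}$ and $\alpha^{\mu/\mu^-}$). For the final identification of those determinants with tableau counts, the paper simply cites Gessel--Viennot (Theorem 14 of that reference) together with the column-index shift turning semistandard fillings with row-$i$ entries in $\{-\lambda_i,\dots,-1\}$ into strict fillings with entries in $\{1,\dots,\lambda^+_i-1\}$, rather than re-deriving the lattice-path argument you sketch.
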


Special cases of  Theorem~\ref{t.euler-two-pointed-tableau} include Castelnuovo's formula and the Eisen\-bud-Harris-Pirola formula.  Its proof is given in \S\ref{Exs}, along with a discussion of other special cases and further connections to the combinatorics of tableaux.  We also establish the one-pointed case of a conjecture of Chan and Pflueger, expressing $\chi(\OO_{G_d^{\ba}(C,P)})$ as an enumeration of {\em set-valued tableaux}.\footnote{Chan and Pflueger have now proved their conjecture in the general two-pointed case, expressing $\chi\left(\OO_{G_d^{\ba, \bb}(C,P,Q)}\right)$ as an enumeration of set-valued \textit{skew} tableaux in \cite{cp}. }

\medskip

To conclude this introduction, we briefly sketch the argument for the \textit{classical} case of our main theorem, describing the Euler characteristic of the locus $W^r_d(C) \subseteq \Pic^d(C)$.  The construction of $W^r_d(C)$ as a degeneracy locus is standard; see \cite{kl}, \cite[\S VII]{acgh}, or \cite[(14.4.5)]{fulton-it}.

Fix a point $P$ on a smooth curve $C$, and let $\cL$ be a Poincar\'e bundle on $C\times \Pic^d(C)$, normalized so that $\cL|_{\{P\}\times \Pic^d(C)}$ is trivial.  Choose a nonnegative integer $n$ large enough so that all divisors of degree $n+d$ are non-special; any nonnegative $n\geq 2g-1-d$ will do.  Writing $\pr$ and $\prr$ for the projections from $C\times \Pic^d(C)$ to $C$ and $\Pic^d(C)$, respectively, let $\cE = \prr_*(\cL\otimes \pr^*\OO_C(nP))$ and $\cF = \prr_*(\cL\otimes \pr^*\OO_{nP})$.  Then the exact sequence on $C$
\[
  0 \to \OO_C \to \OO_C(nP) \to \OO_{nP} \to 0
\]
transforms via $\prr_*(\cL\otimes \pr^*(\cdot))$ into an exact sequence
\[
  0 \to \prr_*\cL \to \cE \xrightarrow{\varphi} \cF
\]
on $\Pic^d(C)$.  The Brill-Noether variety $W^r_d(C)$ is thereby identified with the locus in $\Pic^d(C)$ where $\dim\ker(\varphi)\geq r+1$.

Since $L(nP)$ is non-special for all $L$ in $\Pic^d(C)$, Riemann-Roch shows that the sheaf $\cE$ is locally free of rank equal to $h^0(C,L(nP))$; that is,
\[
  \rk(\cE) = n+d-g+1.
\]
The sheaf $\cF$ is also locally free, of rank
\[
  \rk(\cF)  = n,
\]
and, in fact, $\cF$ has a filtration $\cF=\cF_n\twoheadrightarrow \cF_{n-1}\twoheadrightarrow \cdots \twoheadrightarrow \cF_{1}=\OO_C$ with $\textrm{Ker}(\cF_{i}\twoheadrightarrow \cF_{i-1})$ trivial for all $i$ --- to see this, apply $\prr_*(\cL\otimes \pr^*(\cdot))$ to the exact sequence
\[
  0 \to \OO_{P} \to \OO_{nP} \to \OO_{(n-1)P} \to 0
\]
and use that $\prr_*(\cL\otimes \pr^*(\OO_{P}))=\OO_C$, from the normalization of $\cL$.
This means the Chern classes of $\cF$ are trivial, so $c(\cF-\cE)=c(-\cE)$.

The Brill-Noether dimension estimate comes from a basic fact about matrices: the locus of $q\times p$ matrices having kernel of dimension at least $t$ has codimension $t(q-p+t)$ inside the affine space of all matrices.  (Take $t=r+1$, $p=n+d-g+1$, and $q=n$ to get the Brill-Noether number.)  Applying the K-theoretic Giambelli formula of \cite{a} yields
\[
  \left[\OO_{W^r_d(C)}\right] = \left| \sum_{k\geq 0} \binom{g-d+r+k-1}{k}(-1)^k c_{g-d+r+j-i+k}(-\cE) \right|_{1\leq i,j\leq r+1}
\]
in $K(\Pic^d(C))$, whenever $\dim W^r_d(C) = \rho(g,r,d)$.  The Euler characteristic formula is then deduced from Hirzebruch-Riemann-Roch and some linear algebra (see \S\ref{Exs}).

\bigskip
\noindent
{\it Acknowledgements.}  Our initial motivation for this project came from studying \cite{clpt} and \cite{himn}, and we thank these authors for their inspiring work.  This collaboration began at the Fields Institute Thematic Program on Combinatorial Algebraic Geometry, and we are grateful to the organizers and the Institute for providing a stimulating working environment. We would like to thank Melody Chan for pointing us to   \cite{lenart}, and Allen Knutson for discussion about intersecting Schubert varieties. For \S\ref{Schub} we borrow the latex code for displaying pipe dreams from \cite{kmy}, whose  original version is credited there to Nantel Bergeron. We are grateful to the anonymous referees for their careful reading and for numerous suggestions which have resulted in a much improved manuscript.

\section{Background and preliminaries}
\label{background}

We begin by reviewing some of the basic facts we will need in proving Theorem~\ref{t.degloci}.

\subsection{Connective K-theory}

Our main theorem about degeneracy loci gives formulas in the connective K-homology of an algebraic variety $X$.  Foundational facts about this theory can be found in \cite{cai,dl}, and briefer digests are in \cite{himn}, \cite{himn2},  and \cite[Appendix~A]{a}.  The main features we will require are the following:

\medskip

\begin{enumerate}[(a)]
\item The connective K-homology $CK_*(X)$ is a graded module over $\ZZ[\beta]$, with $\deg \beta=1$.

\medskip

\item There are Chern classes operators for vector bundles; for a vector bundle $E$ on $X$, if $\alpha\in CK_*(X)$, then $c_k(E)\cdot \alpha \in CK_{*-k}(X)$.

\medskip

\item Specializing $\beta=0$ and $\beta=-1$ induces natural isomorphisms
\[
  CK_*(X)/(\beta=0) \isom A_*(X) \quad \text{ and } \quad CK_*(X)/(\beta=-1) \isom K_\circ(X)
\]
with Chow homology and the Grothendieck group of coherent sheaves, respectively.

\medskip

\item There are fundamental classes $[Z]\in CK_*(X)$ for closed subvarieties $Z\subseteq X$, specializing to $[Z]\in A_*(X)$ and $\left[\OO_Z\right]\in K_\circ(X)$.
\end{enumerate}

\medskip
A degeneracy locus inherits its scheme structure by pullback from a universal degeneracy locus.  In exploiting this, the key statement we need is this (cf.~\cite[Lemma, p.~108]{fulton-pragacz} and \cite[Theorem~7.4]{dl}):

\begin{lemma}\label{l.pullback}
Let $f$ be a morphism from a pure-dimensional Cohen-Macaulay scheme $X$ to a nonsingular variety $Z$.  Suppose $Y \subseteq Z$ is a Cohen-Macaulay subscheme of pure codimension $d$.  Then $W=f^{-1}Y$ has codimension $\leq d$.  If $W$ has pure codimension $d$ in $X$, then it is Cohen-Macaulay and $[W]=f^*[Y]$ in $CK_*(X)$.
\end{lemma}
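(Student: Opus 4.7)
The plan is to reduce to the case of a regular embedding by factoring $f$ through its graph. Write $f$ as the composition $X \xrightarrow{\Gamma_f} X\times Y \xrightarrow{p_2} Y$, where $p_2$ is the second projection. Because $Y$ is nonsingular, $\Gamma_f$ is a regular embedding of codimension $\dim Y$. The scheme $W=f^{-1}Z$ is then identified with $\Gamma_f^{-1}(X\times Z)$, i.e., with the scheme-theoretic intersection of $\Gamma_f(X)$ and $X\times Z$ inside the ambient $X\times Y$. Note that $X\times Z$ is Cohen-Macaulay of pure codimension $d$ in $X\times Y$, since $X$ and $Z$ are, and $X\times Y$ is itself Cohen-Macaulay.

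Next I would handle the codimension bound and Cohen-Macaulay property of $W$. Locally around any point of $W$, the ideal of $Z$ in $Y$ has height exactly $d$, and its pullback to $X$ defines $W$ by the same number of equations; Krull's height theorem gives $\codim_X W \leq d$. When equality holds, the pulled-back equations form a regular sequence at each local ring of $X$ (using that $X$ is Cohen-Macaulay), so $W$ is Cohen-Macaulay. Equivalently, under the isomorphism $X \isom \Gamma_f(X)$, this says that the local defining sequence of the regular embedding $\Gamma_f$ cuts a regular sequence on the Cohen-Macaulay scheme $X\times Z$, the standard criterion for a proper intersection with a regular embedding to preserve Cohen-Macaulayness.

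For the class formula, I would invoke the refined Gysin pullback $\Gamma_f^!$ on connective K-homology associated to the regular embedding $\Gamma_f$, whose foundational properties are set up in \cite{cai,dl}. Since $p_2$ is flat, one has $p_2^*[Z] = [X\times Z]$ in $CK_*(X\times Y)$, and (the definition of) $f^*$ for a morphism from a Cohen-Macaulay scheme to a nonsingular variety is $\Gamma_f^! \circ p_2^*$. When the intersection is proper of the expected codimension $d$, the refined pullback of the Cohen-Macaulay class $[X\times Z]$ equals the fundamental class of its scheme-theoretic preimage, giving $f^*[Z] = \Gamma_f^![X\times Z] = [W]$ in $CK_*(X)$.

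The main obstacle is checking that all the classical intersection-theoretic compatibilities used above — flat pullback of fundamental classes, the refined Gysin homomorphism for a regular embedding, and the identification of the refined pullback with the preimage class in proper intersections — go through unchanged in connective K-homology. This amounts to invoking the formalism of \cite{cai,dl}; in particular, \cite[Theorem~7.4]{dl} already establishes a closely related pullback statement, and I expect the present lemma to follow either directly from that result or by the same techniques.
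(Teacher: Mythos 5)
Your reduction is the same one the paper uses: factor $f$ through its graph, identify $W$ with $\Gamma_f\cap(X\times Z)$, and get the codimension bound and Cohen--Macaulayness from the fact that the local equations of the regular embedding $\Gamma_f$ cut a regular sequence on the Cohen--Macaulay scheme $X\times Z$ when the intersection is proper. Up to that point your argument matches the paper (which outsources these parts to the Lemma on p.~108 of Fulton--Pragacz).

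The gap is in the final step. You write that ``when the intersection is proper of the expected codimension $d$, the refined pullback of the Cohen--Macaulay class $[X\times Z]$ equals the fundamental class of its scheme-theoretic preimage,'' and treat this as a known compatibility of the formalism of \cite{cai,dl}. But this assertion \emph{is} the content of the lemma beyond the Chow-level statement: in K-theory, $f^*[\OO_Z]$ is by definition the alternating sum $\sum_i(-1)^i\,[Tor^Y_i(\OO_X,\OO_Z)]$, so what must be shown is that the higher Tor sheaves vanish, leaving only $[\OO_X\otimes_{\OO_Y}\OO_Z]=[\OO_W]$. That vanishing is not a formal property of refined Gysin maps; it is exactly what the regular-sequence observation buys you, and you never cash it in. The paper's proof does so explicitly: the Koszul complex $K_\bullet(y)$ on the local equations $y_1,\ldots,y_m$ of $\Gamma_f$ resolves $\OO_{\Gamma_f}$, and since $\bar y_1,\ldots,\bar y_m$ form a regular sequence on the Cohen--Macaulay scheme $X\times Z$ (by the codimension count), $K_\bullet(y)\otimes\OO_{X\times Z}$ remains exact, which is precisely the statement that $Tor_i^{\OO_{X\times Y}}(\OO_{\Gamma_f},\OO_{X\times Z})=0$ for $i>0$. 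Your proposal should close this loop by running the Koszul-complex argument (or by verifying that the cited results of \cite{dl} actually contain it for possibly singular Cohen--Macaulay $X$); ``I expect the lemma to follow by the same techniques'' leaves the one genuinely new step unproved.
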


\begin{proof}
Everything except the last statement is contained in \cite[Lemma, p.~108]{fulton-pragacz}, and the equality $[W]=f^*[Y]$ is also proved there for cohomology (or Chow) classes.  So it suffices to prove this equality for K-theory, which we do by a slight refinement of the standard argument for cohomology.  Let $\Gamma_f\subseteq X\times Z$ be the graph of $f$, so $W$ is identified with $\Gamma_f \cap (X\times Y)$ via the first projection.

If $\dim Z=m$, then the graph $\Gamma_f \subseteq X\times Z$ is locally cut out by a regular sequence $z_1,\ldots,z_m$; that is, the Koszul complex $K_\bullet(z)$ is exact and resolves $\OO_{\Gamma_f}$.  Indeed, there is an exact sequence
\[
  0 \leftarrow \OO_{\Gamma_f} \leftarrow T \leftarrow \exterior^2 T \leftarrow \cdots \leftarrow \exterior^m T \leftarrow 0,
\]
where $T = pr_2^*\,T^\vee_{Z}$ is the cotangent bundle of $Z$, pulled back to $X\times Z$.

Since $X\times Y$ is Cohen-Macaulay and $W\isom \Gamma_f \cap (X\times Y)$ has codimension $d+m$ in $X\times Z$, the restrictions $\bar{z}_1,\ldots,\bar{z}_m$ to $X\times Y$ also form a regular sequence.  This means the Koszul complex $K_\bullet(\bar{z}) = K_\bullet(z)\otimes \OO_{X\times Y}$ is also exact, so by restricting the above resolution to $X$ via the graph morphism, we obtain an exact sequence
\[
    0 \leftarrow \OO_{W} \leftarrow T\otimes \OO_{X} \leftarrow \exterior^2 T \otimes \OO_{X}\leftarrow \cdots \leftarrow \exterior^m T\otimes \OO_{X} \leftarrow 0.
\]
Since $f^*[\OO_{Y}] =\sum_i (-1)^i\, [\mathrm{Tor}^Z_i(\OO_X,\OO_{Y})]$ by definition, we see that $f^*[\OO_{Y}]=\allowbreak [\OO_X \otimes_{\OO_{Z}} \OO_{Y}] \allowbreak=[\OO_W]$, since exactness of the above sequence shows that the higher Tor terms vanish.
\end{proof}

\begin{remark}\label{r.technical1}
In \cite{himn}, formulas are proved in connective K-cohomology $CK^*(X)$, under the hypothesis that $X$ is smooth.  The relationship with our more general setup is best described in the framework of the {\it operational cohomology theory} associated to a (generalized oriented Borel-Moore) homology theory \cite{ap,gk}.  One can define $CK^*$ to be the operational cohomology ring associated to the homology theory $CK_*$, so that $CK^*(X)$ is defined for any scheme.  This is a graded algebra over $\ZZ[\beta]$, where now $\beta$ has degree $-1$, and $CK_*(X)$ is a module for $CK^*(X)$, with $c\in CK^i(X)$ acting as a homomorphism $CK_*(X) \to CK_{*-i}(X)$.

Specializing at $\beta=0$ and $\beta=-1$ produces natural isomorphims
\[
  CK^*(X)/(\beta=0) \isom A^*(X) \quad \text{ and }\quad CK^*(X)/(\beta=-1) \isom \mathrm{op}K^\circ(X),
\]
where $A^*(X)$ is the Fulton-MacPherson operational Chow ring, and $\mathrm{op}K^\circ \!(X)$ is the operational K-theory developed in \cite{ap}.  When $X$ is smooth, Poincar\'e isomorphisms show that the operational $CK^*(X)$ agrees with the connective K-cohomology used in \cite{himn}, and that $CK^*(X) \isom CK_{\dim X - *}(X)$.
\end{remark}

\begin{remark}\label{r.technical2}
Higher connective K-groups are defined and studied in \cite{cai} and \cite{dl}.  The two versions coincide in the part corresponding to the Grothendieck group $K_\circ$, but they diverge in general.  Both work in the category of quasi-projective schemes.  For an explanation of how to extend results to general schemes, see \cite{descent}.  In particular, one can construct Chern classes using the projective bundle formula and Grothendieck's method, as in \cite{cai} (and \cite{a}).
\end{remark}

\subsection{The degeneracy loci \texorpdfstring{$W_{\bp,\bq}$}{W} and \texorpdfstring{$\Omega_{\bp,\bq}$}{Omega}}
\label{ss.321}

\medskip
Now we turn to the degeneracy locus setup.  We have a sequence of vector bundles
\[
E_{p_t}\hookrightarrow \cdots \hookrightarrow E_{p_1} = E \xrightarrow{\varphi} F = F_{q_1} \twoheadrightarrow \cdots \twoheadrightarrow F_{q_t}
\]
on a (now possibly singular) variety $X$, where subscripts indicate rank, so that
\[
  0< p_t<\cdots<p_1 \quad \text{ and } \quad q_1>\cdots>q_t>0.
\]
It will be convenient to assume that the flag $E_{p_t}\hookrightarrow \cdots \hookrightarrow E_{p_1}$ extends to a \textit{full} flag $E_{1}\hookrightarrow \cdots \hookrightarrow E_{p_1}$ of sub-bundles of $E$ defined on $X$, and similarly, the flag $F_{q_1} \twoheadrightarrow \cdots \twoheadrightarrow F_{q_t}$ extends to a \textit{full} flag $F_{q_1} \twoheadrightarrow \cdots \twoheadrightarrow F_{1}$ of quotients of $F$ defined on $X$. This is harmless, as there exists always a full flag extending a given  partial flag, possibly after replacing $X$ with $X'$ such that $X'\rightarrow X$ is a tower of projective bundles, so that $CK_*(X)\hookrightarrow CK_*(X')$.

Let $V:=E\oplus F$.  The vector bundle $V$ includes isomorphic copies of the sub-bundles $E_\bullet$ via the graph $E_{\varphi}$ of~$\varphi$:
\[
  E_{p_t} \subset \cdots \subset E_{p_1} = E_{\varphi} \subseteq V,
\]
and it also comes with natural projections $V \twoheadrightarrow F_{q_i}$ for all $i$.

Our degeneracy loci lie in $X$, and in a Grassmann bundle over $X$:
\begin{equation}\label{e.deg-diagram}
\begin{tikzcd}
  \Omega_{\bp,\bq} \arrow[hookrightarrow]{r}  \arrow[rightarrow]{d} & \bGr(t,V) \arrow[rightarrow]{d}{\pi} \\
   W_{\bp,\bq}   \arrow[hookrightarrow]{r} &  X.
\end{tikzcd}
\end{equation}
The locus $W_{\bp,\bq}\subseteq X$ is defined by the conditions
\[
  \dim\ker( E_{p_{j}} \to F_{q_i} )\geq 1+i-j \qquad\mbox{for all $i,j$,}
\]
where here we usually assume
\renewcommand{\theequation}{$*$}
\begin{equation}\label{e.skew}
  q_i \geq p_{i} -1   \quad\text{ for all }i.
\end{equation}
(Evidently, it suffices to require these conditions only for $j\leq i$.  Later we will see that the ones for $j=i$ are enough.)

\renewcommand{\theequation}{\arabic{equation}}
\addtocounter{equation}{-1}

To define the locus $\Omega_{\bp,\bq}\subseteq \bGr(t,V)$, let $\bS\subseteq V$ be the tautological rank $t$ sub-bundle on $\bGr(t,V)$.  (Here $V$ should be understood as $\pi^*V$ --- following a common abuse, we omit notation for such pullbacks.)  Using the inclusions and projections $E_{p_j} \hookrightarrow V \twoheadrightarrow F_{q_i}$ described above, $\Omega_{\bp,\bq}\subseteq \bGr(t,V)$ is defined by the conditions
\begin{align}
\label{Sconditions}
  \dim( \bS \cap E_{p_j} ) \geq t+1-j \quad \text{ and }\quad  \dim\ker( \bS \to F_{q_i} ) \geq i \qquad\mbox{for all $1\leq i,j\leq t$.}
\end{align}
No restrictions on $\bp$ and $\bq$ are needed here.  Note that $\Omega_{\bp,\bq'} \subseteq \Omega_{\bp,\bq}$ if $q'_i\geq q_i$ for all $i$.  (And likewise, $\Omega_{\bp',\bq} \subseteq \Omega_{\bp,\bq}$ if $p'_j \leq p_j$ for all $j$.)  From the definition, the fiber of $\Omega_{\bp,\bq} \to X$ over any point $x\in X$ is an intersection of two Schubert varieties in the Grassmannian $\Gr(t,V|_x)$.

\subsection{321-avoiding permutations}
In analyzing the relationship between $\Omega_{\bp,\bq}$ and $W_{\bp,\bq}$, we will need some combinatorics of permutations and Schubert varieties.  For any permutation $v$, there is a rank function
\[
  r_v(a,b) := \#\{ i\leq a \,|\, v(i)>b \}.
\]
These define Schubert varieties in the flag variety (or a degeneracy locus on any variety with flagged vector bundles), by imposing conditions
\[
  \dim \ker ( E_a \to F_b ) \geq r_v(a,b) \qquad \mbox{for all $(a,b)$.}
\]
The Bruhat order on permutations describes containment of Schubert varieties; equivalently, given two permutations $u$ and $v$, one has $u\leq v$ if and only if $r_u(a,b) \leq r_v(a,b)$ for all $(a,b)$.

In fact, the above conditions are redundant, and one can find a much shorter list of conditions.  Suppose one has a collection of pairs $S=\{(a_i,b_i)\}_i$ and corresponding integers $k_i$ such that the set of permutations $u$ satisfying $r_u(a,b)\geq k_i$ has a unique minimum $v$ in Bruhat order.  Then the Schubert variety (or degeneracy locus) corresponding to $v$ is determined (scheme-theoretically) by the conditions 
\[
  \dim \ker (E_{a_i} \to F_{b_i}) \geq k_i \qquad \mbox{for $(a_i,b_i)$ in $S$.}
\]

One choice of $S$ is given by Fulton's essential set for the permutation $v$ \cite[Lemma 3.10]{fulton-flags}.  For the permutations arising in our situation, we will use a different choice.

For any $\bp,\bq$, we define an associated permutation $w$ by setting
\[
  w(p_i) := \max\{ q_{i}+1, \, p_i \}  \quad \text{ for } 1\leq i\leq t,
\]
and then filling in the remaining entries minimally with unused numbers in increasing order.  For example, if $\bp=(5,4,1)$ and $\bq=(5,2,1)$ then
\begin{equation}
\label{eq:wex}
  w = {\bf2}\;1\;3\;{\bf4}\;{\bf6}\;5.
\end{equation}
Given $\bp,\bq$, let us also define a sequence $\bq'$ by
\[
 q'_i = \max\{ q_i,\, p_i-1\}.
\]
The associated permutations for $\bp,\bq$ and  for $\bp,\bq'$ are the same, but note that $w(p_i) = q_i'+1$ for all $i$.  The new pair $\bp,\bq'$ satisfies \eqref{e.skew} by definition.  

In fact, the permutation $w$ is a {\it 321-avoiding permutation} (i.e., there are no $a<b<c$ such that $w(c)> w(b)>w(a)$), and all 321-avoiding permutations arise this way for some $\bp,\bq$, since any such permutation is a shuffle of two increasing subsequences (see e.g., \cite{el}).  More precisely, $w$ is obtained by shuffling $(q'_t+1,\dots, q'_1+1)$  with the sequence of left-over numbers.  We have $w(p_i) = q_i'+1 \geq p_i$ for all $i$, and $w(p)\leq p$ for all $p$ not among the $p_i$.  It follows that
\begin{equation}\label{e.w-ess}
 r_w(p_i,q'_i)=1 \quad \text{and} \quad r_w(p_{i+1},q'_i)=0,
\end{equation}
which will be useful later, in the proof of Proposition~\ref{p.fibers}.

From the construction, one sees that $w$ may also be characterized as the (unique) minimal element among all $u$ such that $r_u(p_i,q'_i) \geq 1$, for $\bp,\bq'$ as above.

\begin{lemma}\label{l.w}
The permutation $w$ is the unique minimal one in Bruhat order such that
\[
  \#\{p\leq p_{i}\,|\, w(p)>q_{i} \} \geq 1 \qquad\mbox{for all $i$.}
\]
Its length is equal to $\sum_{i=1}^t(q'_i-p_{i}+1)$.
\end{lemma}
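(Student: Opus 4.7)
The proof plan breaks into three stages: verify the rank inequalities for the constructed $w$, compute its length, and establish minimality and uniqueness.

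\emph{Stage 1: Verification.} Fix $i \geq j$ (the inequality is trivial otherwise). Among the $i - j + 1$ positions $p_j > p_{j+1} > \cdots > p_i$, all are $\leq p_j$, and for $j \leq k \leq i$ one has $w(p_k) = \max\{q_k + 1,\, p_k\} \geq q_k + 1 > q_i$. Each such $p_k$ therefore contributes to $\#\{p \leq p_j : w(p) > q_i\}$, giving the required bound.

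\emph{Stage 2: Length.} I exploit the structural description of $w$: the values $w(p_i) = q'_i + 1$ are strictly decreasing as $p_i$ decreases (since $q'_1 > \cdots > q'_t$), while $w$ restricted to the complement $\bar P := \{1,\ldots,n\} \setminus \{p_1,\ldots,p_t\}$ is the order-preserving bijection onto $\{1,\ldots,n\} \setminus \{q'_k+1\}_k$. Consequently no inversion has both endpoints in $P := \{p_1,\ldots,p_t\}$ or both in $\bar P$. Grouping the remaining inversions by their $P$-endpoint $p_i$ and counting via the explicit bijection on $\bar P$ yields exactly $q'_i - p_i + 1$ inversions involving $p_i$; summing gives $\ell(w) = \sum_i (q'_i - p_i + 1) = |\lambda'/\mu|$.

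\emph{Stage 3: Minimality and uniqueness.} The hypothesis rewrites as $\rk_{w'}(p_j, q_i) \leq p_j - 1 - i + j$, where $\rk_{w'}(p,q) := \#\{p' \leq p : w'(p') \leq q\}$. By the general theory of matrix Schubert varieties \cite{fulton-flags}, the set of permutations whose rank function is pointwise bounded above by a prescribed matrix forms an upper order ideal in Bruhat order, hence has a unique minimum. To identify this minimum with our $w$, I verify that the essential cells of the Rothe diagram of $w$ — read off from its $321$-avoiding structural description — all lie among the prescribed positions $(p_j, q_i)$, so that no additional rank conditions are needed to pin down $w$.

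The main obstacle is Stage 3: producing the essential-set identification in a self-contained way, rather than as a downstream consequence of Schubert calculus. A cleaner alternative would be a direct Bruhat exchange argument — given any $w' \neq w$ satisfying the inequalities, produce a shorter $w'' \leq w'$ still satisfying them — which would deliver minimality, uniqueness, and the length formula in a single stroke.
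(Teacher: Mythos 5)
Your Stages 1 and 2 are sound: the verification that $w$ satisfies the rank inequalities is complete, and the inversion count is correct — the key unstated point being that every position of $\bar P$ to the left of $p_i$ carries a value smaller than $w(p_i)=q_i'+1$ (which holds because $q_i'+1\geq p_i$), so the inversions at $p_i$ are counted by $|\{b\in\bar P: w(b)<q_i'+1\}|-|\{a\in\bar P: a<p_i\}| = q_i'-p_i+1$. These parts align with what the paper leaves implicit (its entire proof is the one-line remark that the lemma ``can be proved using Fulton's essential set'').

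The genuine gap is in Stage 3, which is the actual content of the lemma. The inference ``the set of permutations whose rank function is bounded above by a prescribed matrix is an upper order ideal in Bruhat order, hence has a unique minimum'' is false: an upper order ideal in Bruhat order can have several minimal elements, and this already happens for intersections of rank conditions. For instance, in $S_3$ the conditions $r_v(1,1)\leq 0$ and $r_v(2,2)\leq 1$ are satisfied exactly by $\{231,312,321\}$, which has the two minimal elements $231$ and $312$. So uniqueness of the minimum cannot be obtained for free from the order-ideal structure; it is precisely what Fulton's essential-set machinery is needed for. The correct closing move — which you name but explicitly defer as ``the main obstacle'' — is to compute the essential set of this particular $321$-avoiding $w$, check that every essential cell is of the form $(p_j,q_i)$ with $r_w(p_j,q_i)=p_j-1-i+j$ (so the prescribed conditions imply the essential conditions $r_v(p,q)\leq r_w(p,q)$ for all $(p,q)\in \mathrm{Ess}(w)$), and then invoke Fulton's criterion that these conditions characterize $\{v: v\geq w\}$. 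Combined with Stage 1 this gives that $w$ is the unique minimum of the set, and Stage 2 gives its length. As written, the minimality and uniqueness assertions — the heart of the lemma — are not established.
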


The length of $w$ is defined to be $\#\{a<b\,|\,w(a)>w(b)\}$; it is the codimension of the corresponding Schubert variety in the flag variety.  The lemma implies that the conditions specified by $\bp,\bq$ are equivalent to those given by $\bp,\bq'$, that is, $W_{\bp, \bq}= W_{\bp, \bq'}$. Its proof is fairly straightforward, since the condition is trivial whenever $q_i\leq p_i-1$. 

It follows that among the conditions
\[
 \dim \ker( E_{p_j} \to F_{q_i} ) \geq 1+i-j
\]
defining $W_{\bp,\bq}$, those with $i=j$ are sufficient.

\subsection{Schubert varieties in flag bundles}

Next we consider a special case of the degeneracy locus problem (which turns out to be the universal situation).  We have a variety $Y$, with a vector bundle $V_Y$ of rank $p_1+q_1$ and quotient bundles $V_Y \twoheadrightarrow F_{q_i}$ of ranks $q_i$.  Let $X = \bFl(\bp,V_Y) \to Y$ be the flag bundle, and let $E_\bullet$ be the tautological flag of sub-bundles.  As usual, we suppress notation for pullbacks, writing $V=V_Y$ and $F_\bullet$ for the corresponding bundles on $X$.

In this setting, the diagram \eqref{e.deg-diagram} takes the form
\begin{equation}\label{e.deg-diagram2}
\begin{tikzcd}
  \bOmega_{\bp,\bq} \arrow[hookrightarrow]{r}  \arrow[rightarrow]{d} &  \bFl(\bp,V_Y) \times_Y \bGr(t,V_Y) \arrow[rightarrow]{d}{\hat\pi} \\
   \bW_{\bp,\bq}   \arrow[hookrightarrow]{r} &  X=\bFl(\bp,V_Y).
\end{tikzcd}
\end{equation}
The degeneracy locus $\bW_{\bp,\bq}$ is a Schubert variety in the flag bundle $X$, so it corresponds to a permutation; in fact, this is the permutation $w$ of Lemma~\ref{l.w}.

\begin{proposition}\label{p.fibers}
Assume the above situation, so $X=\bFl(\bp,V_Y)$.  Given $\bp,\bq$, let $\bq'$ be defined as before, that is, $q'_i=\max\{q_i,\,p_i-1\}$.  Then $\hat\pi(\bOmega_{\bp,\bq})=\bW_{\bp,\bq'}=\bW_{\bp,\bq}$.
\end{proposition}

\begin{proof}
The statement is local, so in proving it we may reduce to the case where $Y$ is a point. In this case, $X=\Fl(\bp,V)$ is a partial flag variety, and $\Omega_{\bp,\bq} \subseteq \Fl(\bp,V) \times \Gr(t,V)$.  Let $\pi\colon \Fl(\bp,V) \times \Gr(t,V) \to \Fl(\bp,V)$ and $\phi\colon \Fl(\bp,V) \times \Gr(t,V) \to \Gr(t,V)$ be the projections.

The conditions \eqref{Sconditions} defining $\Omega_{\bp,\bq}$ imply that after forgetting the $t$-dimensional subspace $S\subseteq V$, one has
\begin{equation}\label{forgetS}
 \dim \ker( E_{p_j} \to F_{q_i} ) \geq 1+i-j \quad \text{for all }i,j.
\end{equation}
By Lemma~\ref{l.w}, these conditions with $i=j$ imply the rank conditions given by the permutation $w$ associated to $\bp,\bq$, and thus
define the Schubert variety $W_{\bp,\bq'} \subseteq \Fl(\bp,V)$.  So it follows that $\pi(\Omega_{\bp,\bq}) \subseteq W_{\bp,\bq'}$.

On the other hand, the projection $\pi\colon \Omega_{\bp,\bq} \to W_{\bp,\bq'}$ is $B$-equivariant, for the standard action on $\Fl(\bp,V)\times \Gr(t,V)$ of a Borel subgroup $B\subseteq GL(V)$ fixing the flag $F_\bullet$.  To show that $\pi$ is surjective, it suffices to show that the fiber is nonempty over a general flag $A_\bullet$ in $W_{\bp,\bq'}$, i.e., a flag $A_\bullet$ such that
\[
  K_{i,i} := \ker( A_{p_i} \to F_{q'_i} ) \quad \mbox{satisfies $\dim K_{i,i}=1$ for all $i$,}
\]
 and
\[
  K_{i+1,i} := \ker( A_{p_{i+1}} \to F_{q'_i} ) = 0 \quad \mbox{for all $i$.}
\]
The dimensions  here follow from \eqref{e.w-ess}.  We see that the vector spaces $K_{i,i}$  give independent lines, since $K_{i+1,i} = 0$ means $K_{i,i} \cap A_{p_{i+1}}=0$.  So
\[
  S = K_{1,1} \oplus K_{2,2} \oplus \cdots \oplus K_{t,t} \subseteq A_{p_1}
\]
has dimension $t$, and $S\cap A_{p_j} = K_{j,j} \oplus \cdots \oplus K_{t,t}$ has dimension $t+1-j$.  Since there is a surjection $F_{q_i'} \twoheadrightarrow F_{q_i}$, one sees $\ker(S \to F_{q_i})$ contains $K_{1,1} \oplus \cdots \oplus K_{i,i}$, and therefore has dimension at least $i$.  We conclude that $\pi(\Omega_{\bp,\bq}) = W_{\bp,\bq'}$.
\end{proof}

Next we turn to the singularities and dimensions of our degeneracy loci.  At this point it will help to use partition notation.  As in the introduction, we define partitions $\lambda$ and $\mu$ by
\[
  \lambda_i = q_i-t+i \quad \text{ and } \quad \mu_j=p_{j}-(t+1-j).
\]
The condition \eqref{e.skew} is equivalent to requiring that $\lambda_i\geq \mu_i$ for all $i$, i.e., $\lambda/\mu$ is a skew shape.  We use the notation $\lambda'$ to denote the partition given by $\lambda'_i = q'_i-t+i$.  

\begin{proposition}\label{p.fibers2}
Assuming $X=\bFl(\bp,V_Y)$, the locus $\bOmega_{\bp,\bq}$ is reduced (or is Cohen-Macaulay, or has rational singularities) if $Y$ is reduced (resp., is Cohen-Macaulay, has rational singularities).  The same is true of $\bW_{\bp,\bq'}$.  

The dimensions of these loci are $\dim \bOmega_{\bp,\bq} = \dim X -|\lambda|+|\mu|$ and $\dim \bW_{\bp,\bq'} = \dim X - |\lambda'/\mu|$.
\end{proposition}

\begin{proof}
Again the statements are local on $Y$ (and preserved by products), so we will assume $Y$ is a point and show that the varieties in question have rational singularities (which implies Cohen-Macaulay and reduced).  Since $W_{\bp,\bq'}$ is a Schubert variety, it has rational singularities; its codimension is the length of $w$, which was calculated in Lemma~\ref{l.w} and is equal to $|\lambda'/\mu|$.  We focus on $\Omega_{\bp,\bq}$, using a description which will be useful later.

Recall that $\phi\colon \Fl(\bp,V) \times \Gr(t,V) \to \Gr(t,V)$ is the second projection.  Then
\[
  \Omega_{\bp,\bq} = \Omega' \cap \phi^{-1}\Omega_\lambda,
\]
where
\begin{align*}
  \Omega_\lambda &:= \{ S\,|\, \dim\ker( S \to F_{q_i} ) \geq i \text{ for all }i\} \subseteq \Gr(t,V), \quad \text{and} \\
  \Omega' &:= \{ (A_\bullet,S)\,|\,\dim( S \cap A_{p_j} ) \geq t+1-j \text{ for all }j\} \subseteq \Fl(\bp,V) \times \Gr(t,V).
\end{align*}
Restricting the projections $\pi$ and $\phi$ to $\Omega'$ produces flat morphisms (which we will denote by the same letter).  In fact, they are locally trivial fiber bundles, and we can describe their fibers explicitly.

The fiber of the first projection $\pi\colon \Omega' \to \Fl(\bp,V)$ over a flag $A_\bullet$ is a Schubert variety $\Omega_\nu(A_\bullet) \subseteq \Gr(t,V)$.  Here $\nu=\mu^\vee$ is the complementary partition to $\mu$ inside the $t\times (p_1+q_1-t)$ rectangle; specifically, $\nu_j = p_1+q_1-t-p_{t+1-j}+j$.  It follows that
\[
  \dim \Omega' = \dim \Fl(\bp,V) + |\mu|,
\]
which we will use again in the proof of Theorem~\ref{t.deg}.

The fiber of the second projection $\phi\colon \Omega' \to \Gr(t,V)$ is a Schubert variety in $\Fl(\bp,V)$.  Its corresponding permutation is the inverse of the Grassmannian permutation for the partition $\nu$.  In particular, intersecting with $\phi^{-1}\Omega_\lambda$, the morphism
\[
  \phi \colon \Omega_{\bp,\bq} \to \Omega_\lambda
\]
is again flat, and both the base and fibers have rational singularities.  By \cite[Th\'eor\`eme 5]{elkik}, we conclude that $\Omega_{\bp,\bq}$ has rational singularities.  The formula for $\dim \Omega_{\bp,\bq}$ also follows.
\end{proof}

We conclude this section with the following statement:

\begin{proposition}\label{p.bir}
Assume  $X=\bFl(\bp,V_Y)$.  Given $\bp,\bq$, let $\bq'$ be defined as  $q'_i=\max\{q_i,\,p_i-1\}$.  When restricted to $\bOmega_{\bp,\bq'}\subseteq \bOmega_{\bp,\bq}$, the map $\hat\pi\colon \bOmega_{\bp,\bq'} \to \bW_{\bp,\bq'}$ is birational.
\end{proposition}

\begin{proof}
As the statement is local on $Y$, we can reduce to the case where $Y$ is a point.
The argument in the proof of Proposition \ref{p.fibers} to construct an element $S$ in the fiber of $\pi\colon \Omega_{\bp,\bq} \to W_{\bp,\bq'}$ over a generic point $A_\bullet$ of $W_{\bp,\bq'}$
 shows that $\ker(S \to F_{q'_i}) = K_{1,1} \oplus \cdots \oplus K_{i,i}$, so $S\in \Omega_{\bp,\bq'}$ is uniquely determined.  It follows that $\pi\colon \Omega_{\bp,\bq'} \to W_{\bp,\bq'}$ is generically bijective.

Being a Schubert variety, $W_{\bp,\bq'} \subseteq \Fl(\bp,V)$ is the closure of a Schubert cell $W^\circ$, which in turn is a principal homogeneous space for a certain subgroup of $B$.  By Proposition~\ref{p.fibers2}, $\Omega_{\bp,\bq'}$ is reduced.  Since $\pi$ is $B$-equivariant, it follows that the restriction $\pi^{-1}(W^\circ) \to W^\circ$ is an isomorphism.
\end{proof}

\section{A determinantal formula in K-theory}
\label{determinantalF}

Now we can state and prove the main theorem.  Given doubly indexed series $c(i,j)=\sum_{m\geq 0} c_m(i,j)$ for $1\leq i,j\leq t$, we define the determinant
\begin{align}
\begin{split}
\label{eq:Delta}
  \Delta_{\lambda/\mu}(c;\beta) &= \left| (1-\beta T)^{-\lambda_i+\mu_j} c_{\lambda_i-\mu_j+j-i}(i,j) \right|_{1\leq i,j\leq t}\\
  &= \left| \sum_{k\geq 0} \binom{\lambda_i-\mu_j+k-1}{k}\beta^k c_{\lambda_i-\mu_j+j-i+k}(i,j) \right|_{1\leq i,j\leq t}
\end{split}
\end{align}
as in the introduction.  In the statement of the theorem, we return to the general setting, no longer requiring $X$ to be a flag bundle.

\begin{theorem}\label{t.deg}
Let $X$ be a variety with rational singularities, and let $c(i,j) = c^K(F_{q_i}-E_{p_{j}})$ be the K-theoretic Chern classes.
\medskip
\begin{enumerate}[(i)]
\item
\label{t.deg-part1} 
The locus $\Omega_{\bp,\bq}\subseteq \bGr(t,V)$ has $\codim \Omega_{\bp,\bq} \leq |\lambda/\mu|+t(p_1+q_1-t)$.  If equality holds, then $\Omega_{\bp,\bq}$ is Cohen-Macaulay, and
\[
  \pi_*[\Omega_{\bp,\bq}] = \Delta_{\lambda/\mu}(c;\beta) \cdot [X] \qquad\mbox{in $CK_*(X)$.}
\]

\item
\label{t.deg-part2} 
Assume $\lambda_i\geq \mu_i$ for all $i$.  Then $W_{\bp,\bq}\subseteq X$ has $\codim W_{\bp,\bq} \leq |\lambda/\mu|$.  If equality holds, then $W_{\bp,\bq}$ is Cohen-Macaulay, and
\[
  [W_{\bp,\bq}] = \Delta_{\lambda/\mu}(c;\beta) \cdot [X] \qquad\mbox{in $CK_*(X)$.}
\]
\end{enumerate}
\end{theorem}

\noindent
The statement in (\ref{t.deg-part2}) specializes to Theorem~\ref{t.degloci} from the introduction.

In the course of the proof, we require some formulas from \cite{a}.  The first is the determinantal formula for a Grassmannian degeneracy locus.  (This appeared originally in \cite{himn}, in a slightly different form.)  Let $\Omega_\lambda$ be a Grassmannian degeneracy locus, defined by conditions \mbox{$\dim \ker ( E_t \to F_{q_i} ) \geq i$} for all $i$, where $\lambda_i=q_i-t+i$ as above.  Then
\begin{equation}\label{e.a1}
  [\Omega_\lambda] =  \left| (1-\beta T)^{-\lambda_i} c_{\lambda_i+j-i}\left(F_{q_i}-E_t\right) \right|_{1\leq i,j\leq t}.
\end{equation}
Next, the formal determinantal identity used in proving the ``general case'' of \cite[Theorem~1]{a} shows that
\begin{equation}\label{e.a2}
 \left| (1-\beta T)^{-\lambda_i} c_{\lambda_i+j-i}\left(F_{q_i}-E_t\right) \right|_{1\leq i,j\leq t} = \left| (1-\beta T)^{-\lambda_i} c_{\lambda_i+j-i}\left(F_{q_i}-E_{t+1-j}\right) \right|_{1\leq i,j\leq t}.
\end{equation}
Finally, suppose we have a tower of projective bundles
\begin{equation}\label{e.tower}
\PP\left(E_{p_1}/\bS_{t-1}\right) \xrightarrow{\tilde\pi^{(t)} }\cdots \xrightarrow{\tilde\pi^{(3)}} \PP\left(E_{p_{t-1}}/\bS_1\right) \xrightarrow{\tilde\pi^{(2)}} \PP\left(E_{p_t}\right) \xrightarrow{\tilde\pi^{(1)}} X,
\end{equation}
where $\bS_{j+1}/\bS_{j}\subset E_{p_{t-j}}/\bS_{j}$ is the tautological line bundle on the projective bundle $\PP\left(E_{p_{t-j}}/\bS_{j}\right)$ (suppressing notation for pullbacks of bundles under the natural projections $\tilde\pi^{(i)}$, as usual).  Then
\begin{equation}\label{e.a3}
 \tilde\pi^{(j)}_* \left( (1-\beta T)^{-m} c_m\left( F - \bS_{t+1-j} \right) \right) 
  = (1-\beta T)^{-m+p_j-(t+1-j)} c_{m-p_j+t+1-j}\left( F - E_{p_j} \right)
\end{equation}
for any bundle $F$ on $X$ and all $m$.  (This is \cite[Eq.~(5)]{a}.)

In broad strokes, the idea of the proof is simple.  As before, we first treat the case where $X=\bFl(\bp,V)$ is a flag bundle.  There we use the description of $\Omega_{\bp,\bq}$ as an intersection $\Omega' \cap \phi^{-1}\Omega_\lambda$, together with a resolution of $\Omega'$ and the determinantal formula \eqref{e.a1} to produce the desired formula.  This case is universal, and we deduce the general case by pullback, using Lemma~\ref{l.pullback}.

Now we turn to the details.

\begin{proof}[Proof of Theorem~\ref{t.deg}]
As before, we first suppose $X=\bFl(\bp,V_Y) \to Y$ is a flag bundle over a variety $Y$ with rational singularities, so $\bGr(t,V) = \bFl(\bp,V_Y)\times_Y \bGr(t,V_Y)$.  In this case, we have already seen in Proposition~\ref{p.fibers2} that the degeneracy loci have the expected codimensions:
\[
\codim \Omega_{\bp,\bq} = |\lambda/\mu|+t(p_1+q_1-t)
\]
and (when the pair $\bp,\bq$ satisfies \eqref{e.skew})
\[
\codim W_{\bp,\bq} = |\lambda/\mu|
\]
in $\bGr(t,V)$ and $X$, respectively.  In Proposition~\ref{p.fibers2}, we also saw that $\Omega_{\bp,\bq}$ and $W_{\bp,\bq}$ have rational singularities, hence they are in particular Cohen-Macaulay.

Recall from the proof of Proposition~\ref{p.fibers2} that $\Omega_{\bp,\bq} = \Omega' \cap \phi^{-1}\Omega_\lambda$, and this intersection is proper: the codimensions of $\Omega'$ and 
$\phi^{-1}\Omega_\lambda$ add to that of $\Omega_{\bp,\bq}$.  In particular, $[\Omega_{\bp,\bq}]=[\Omega']\cdot \phi^*[\Omega_\lambda]$.

The locus $\Omega'$ admits a desingularization by the variety $\tilde\Omega'$ parametrizing flags of sub-bundles $S_1\subset S_2\subset \cdots \subset S_t$ such that $\mathrm{rank}(S_j)=j$ and $S_{t+1-j}\subseteq E_{p_j}$.  In the tower of projective bundles \eqref{e.tower} this is $\tilde\Omega' = \PP(E_{p_1}/\bS_{t-1})$.  The rank $t$ bundle $\bS_t\subseteq E_{p_1} \subset V$ on $\tilde\Omega'$ defines a map
\[
 f\colon \tilde\Omega' \to \Omega' \subseteq \bGr(t,V), \qquad \left( S_1\subset S_2\subset \cdots \subset S_t \right) \mapsto S_t
\]
which is a desingularization.  (This is one of the standard desingularizations of Grassmannian Schubert varieties, going back to Kempf and Laksov.)  Write $\pi'\colon \tilde\Omega' \to X$ for the composition $\pi'=\pi\circ f$.

Since $\Omega'$ has rational singularities, $f_*[\tilde\Omega'] = \left[\Omega'\right]$.  By the projection formula, we obtain
\[
  f_*f^* [\Omega_{\lambda}] = [\Omega']\cdot \phi^*[\Omega_{\lambda}] = [\Omega_{\bp,\bq}].
\]
Now we can compute the pushforward as
\[
  \pi_*[\Omega_{\bp,\bq}] = \pi_*f_*f^* \phi^*[\Omega_{\lambda}] = \pi'_*f^* \phi^*[\Omega_{\lambda}].
\]
The formula \eqref{e.a1} for $[\Omega_\lambda]$ is preserved under pullback, and we can rewrite it using the identity \eqref{e.a2}:
\begin{align*}
  f^*\phi^*[\Omega_{\lambda}] &= \left| (1-\beta T)^{-\lambda_i} c_{\lambda_i+j-i}\left(F_{q_i}-\bS\right) \right|_{1\leq i,j\leq t} \\
  &= \left| (1-\beta T)^{-\lambda_i} c_{\lambda_i+j-i}\left(F_{q_i}-\bS_{t+1-j}\right) \right|_{1\leq i,j\leq t}.
\end{align*}
Finally, applying \eqref{e.a3} to the entries of the determinant gives
\begin{align*}
  \pi_*[\Omega_{\bp,\bq}] &= \pi'_*\left(  \left| (1-\beta T)^{-\lambda_i} c_{\lambda_i+j-i}\left(F_{q_i}-\bS_{t+1-j}\right) \right|_{1\leq i,j\leq t} \right) \\ 
   &= \left| (1-\beta T)^{-\lambda_i+p_{j}-(t+1-j)} c_{\lambda_i-p_{j}+(t+1-j)+j-i}\left(F_{q_i}-E_{p_{j}}\right) \right|_{1\leq i,j\leq t} \\
   &= \left| (1-\beta T)^{-\lambda_i+\mu_j} c_{\lambda_i-\mu_j+j-i}\left(F_{q_i}-E_{p_{j}}\right) \right|_{1\leq i,j\leq t},
\end{align*}
so we have the asserted formula for this locus.  When $\bp,\bq$ satisfy \eqref{e.skew}, that is, $q_i\geq p_i-1$ for all~$i$, then by Proposition~\ref{p.bir} the map $\pi\colon \Omega_{\bp,\bq} \to W_{\bp,\bq}$ is birational.  Since both loci have rational singularities, it follows that $\pi_*[\Omega_{\bp,\bq}]=[W_{\bp,\bq}]$, and the theorem is proved in this case.

Now we turn to the general situation where $X$ is an arbitrary variety with rational singularities.  We have the vector bundle $V=E_{p_1}\oplus F_{q_1}$ on $X$, as usual, and we form the flag bundle $\bFl = \bFl(\bp,V)\to X$ and Grassmann bundle $\bGr=\bGr(t,V) \to X$.  On $\bFl$, we have the tautological flag
\[
\mathbb{U}_\bullet: \mathbb{U}_{p_t} \subset \cdots \subset \mathbb{U}_{p_1} \subset V
\]
and the flag $E_\bullet$ on $X$ determines a section $\sigma\colon X \to \bFl$ such that $\sigma^{*}(\mathbb{U}_\bullet) = E_\bullet$.  The universal loci
\[
  \bW_{\bp,\bq} \subseteq \bFl \quad \text{and}\quad \bOmega_{\bp,\bq} \subseteq \bFl\times_X \bGr
\]
are defined by the same conditions defining $W_{\bp,\bq}$ and $\Omega_{\bp,\bq}$, respectively, using $\mathbb{U}_\bullet$ in place of $E_\bullet$.  The situation and notation are summarized in Figure \ref{fig:WO}.

\begin{figure}[htb]
\begin{center}
\begin{tikzcd}[back line/.style={densely dotted}, row sep=1.9em, column sep=6em]
& \bOmega_{\bp,\bq}  \ar[hookrightarrow]{rr} \ar[back line]{dd} 
  & & \bFl \times_X \bGr \ar{dd}{\hat\pi}  \\
\Omega_{\bp,\bq} \ar[hookrightarrow]{ur}  \ar[hookrightarrow, crossing over]{rr} \ar{dd} 
  & & \bGr  \ar[hookrightarrow, bend left=0]{ur}[sloped]{\hat\sigma} \\
& \bW_{\bp,\bq} \ar[hookrightarrow, back line]{rr} 
  & & \bFl   \\
W_{\bp,\bq} \ar[hookrightarrow]{rr} \ar[hookrightarrow, back line]{ur}   & & X \ar[crossing over, leftarrow]{uu}[swap, near end]{\pi} \ar[hookrightarrow]{ur}[sloped]{\sigma}
\end{tikzcd}
\end{center}
 \caption{The universal loci $\bW_{\bp,\bq} $ and $\bOmega_{\bp,\bq}$.}
   \label{fig:WO}
\end{figure}

The just-proved case of the theorem applies to these universal loci; in particular, they have rational singularities (so they are Cohen-Macaulay) and we have the asserted formulas for $\hat\pi_*[\bOmega_{\bp,\bq}]$ and $[\bW_{\bp,\bq}]$.  By construction, we have
\[
  W_{\bp,\bq} = \sigma^{-1}(\bW_{\bp,\bq}) \quad \text{and} \quad \Omega_{\bp,\bq} = \hat\sigma^{-1}(\bOmega_{\bp,\bq}).
\]
Since the loci $\bOmega_{\bp,\bq}$ and $\bW_{\bp,\bq}$ are Cohen-Macaulay, we may apply Lemma~\ref{l.pullback} to deduce the general statement of the theorem.
\end{proof}

\begin{remark}\label{r.CM}
In fact, one can further relax the hypothesis on $X$, requiring only that it be Cohen-Macaulay. Instead of using the flag bundle $\bFl$ as in the proof of Theorem~\ref{t.deg},
 after possibly replacing $X$ by $X'$ for an affine bundle $X' \to X$, one can assume that the vector bundles $E_\bullet$ and $F_\bullet$ are pulled back from a product of \textit{flag varieties} 
\[
  Z = \Fl\left(\bp,\mathbb{C}^N\right) \times \Fl\left(\mathbb{C}^N,\bq\right)
\]
for some sufficiently large $N$; here $\Fl\left(\mathbb{C}^N,\bq\right)$ is the flag variety parametrizing \textit{quotients} of $\mathbb{C}^N$.  (See for example \cite{graham-diag}.)  The loci $W_{\bp,\bq}$ and $\Omega_{\bp,\bq}$ are then pulled back from $Z$ and a Grassmann bundle over $Z$, respectively. Since $Z$ is nonsingular, the formula on $Z$ is then given by Theorem~\ref{t.deg},  so we can deduce it on $X$ via pullback, using Lemma~\ref{l.pullback} as in the proof of Theorem~\ref{t.deg} (we require $X$ to be Cohen-Macaulay to apply Lemma~\ref{l.pullback}).
\end{remark}

In the rest of the paper, we will discuss some  applications of the degeneracy locus formula.  One of them is a direct generalization of Kleiman and Laksov's proof of the existence theorem \cite{kl}: by standard intersection theory, one can deduce a criterion for non-emptiness of a degeneracy locus from a formula for its class.

\begin{corollary}\label{c.nonempty}
Let $\bp,\bq$, and $\bq'$ be as above, so the pair $\bp,\bq'$ satisfies \eqref{e.skew}, and let $\lambda'/\mu$ be the skew diagram corresponding to $\bp,\bq'$.  If $\Delta_{\lambda'/\mu}(c;0)\cdot[X] \neq 0$ in $A_*(X)$, then $W_{\bp,\bq'}$ is nonempty, and so $\Omega_{\bp,\bq}$ is also nonempty.

The converse holds when $X$ is projective: If $\codim W_{\bp,\bq'}\geq |\lambda'/\mu|$ and $W_{\bp,\bq'}$ (or equivalently, $\Omega_{\bp,\bq}$) is nonempty, then $\codim W_{\bp,\bq'}=|\lambda'/\mu|$ and $\Delta_{\lambda'/\mu}(c;0)\cdot[X]$ is nonzero in $A_*(X)$.
\end{corollary}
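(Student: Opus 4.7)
The plan is to deduce both implications of the corollary from Theorem~\ref{t.deg}(\ref{t.deg-part2}), specialized to Chow homology by setting $\beta=0$, together with the surjection $\pi(\Omega_{\bp,\bq}) = W_{\bp,\bq'}$ from Proposition~\ref{p.fibers}, which immediately equates the non-emptiness of $\Omega_{\bp,\bq}$ with that of $W_{\bp,\bq'}$.

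For the first implication I would argue by contraposition, showing that if $W_{\bp,\bq'}$ is empty then $\Delta_{\lambda'/\mu}(c;0)\cdot[X] = 0$. Recall from the proof of Theorem~\ref{t.deg} the realization $W_{\bp,\bq'} = \sigma^{-1}(\bW_{\bp,\bq'})$, where $\sigma \colon X \to \bFl(\bp,V)$ is the section corresponding to the flag $E_\bullet$. Since $\bp,\bq'$ satisfies \eqref{e.skew} and $\bW_{\bp,\bq'}$ has codimension exactly $|\lambda'/\mu|$ in the smooth (Cohen-Macaulay) ambient $\bFl$, the universal case of Theorem~\ref{t.deg}(\ref{t.deg-part2}) yields
\[
  [\bW_{\bp,\bq'}] = \Delta_{\lambda'/\mu}(\tilde c;0)\cdot[\bFl]
\]
in $A_*(\bFl)$, with $\tilde c(i,j) = c(F_{q'_i} - \bE_{p_j})$. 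If $W_{\bp,\bq'}$ is empty, then $\sigma$ factors through the open complement $U = \bFl \setminus \bW_{\bp,\bq'}$, on which $[\bW_{\bp,\bq'}]$ restricts to zero; pulling back and using compatibility of $\sigma^*$ with Chern classes (together with $\sigma^*\bE_{p_j} = E_{p_j}$ and $\sigma^*[\bFl] = [X]$, as $\sigma$ is a section) gives
\[
  \Delta_{\lambda'/\mu}(c;0)\cdot[X] = \sigma^*[\bW_{\bp,\bq'}] = 0,
\]
as required.

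For the converse, assume $X$ is projective, $W_{\bp,\bq'}$ is nonempty, and $\codim W_{\bp,\bq'} \geq |\lambda'/\mu|$. Combined with the opposite inequality furnished by Theorem~\ref{t.deg}(\ref{t.deg-part2}), this forces $\codim W_{\bp,\bq'} = |\lambda'/\mu|$, so the determinantal formula applies and yields $[W_{\bp,\bq'}] = \Delta_{\lambda'/\mu}(c;0)\cdot[X]$ in $A_*(X)$. It remains to check that $[W_{\bp,\bq'}] \neq 0$: fixing an ample class $H$ on $X$, pairing $[W_{\bp,\bq'}]$ with $H^{\dim W_{\bp,\bq'}}$ produces a zero-cycle of positive degree $\deg_H(W_{\bp,\bq'})$, so $[W_{\bp,\bq'}]$ is nonzero.

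The argument is essentially formal and no serious obstacle arises; everything reduces to Theorem~\ref{t.deg} together with elementary intersection theory. The one point that deserves mild care is the universal reformulation in the forward direction, where one must track how $\sigma^*$ interacts with the tautological flag $\bE_\bullet$ to recover the correct Chern classes $c(F_{q'_i} - E_{p_j})$ on $X$, but this is an immediate consequence of functoriality of Chern classes under the section $\sigma$.
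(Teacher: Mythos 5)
Your proof is correct and is precisely the ``standard intersection theory'' argument the paper alludes to when it introduces this corollary as a direct generalization of Kleiman--Laksov (the paper states the corollary without writing out a proof). Your contrapositive argument in the forward direction --- pulling back the universal class $[\bW_{\bp,\bq'}]$ along the section $\sigma$, which factors through the complement of $\bW_{\bp,\bq'}$ when $W_{\bp,\bq'}=\emptyset$ --- together with the converse via Theorem~\ref{t.deg}(\ref{t.deg-part2}) and positivity of the degree of a nonempty effective cycle on a projective variety, is exactly the intended route.
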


\section{Varieties of linear series as degeneracy loci}
\label{GandW}

We apply here Theorem~\ref{t.deg} to the study of the Brill-Noether theory of  a  smooth algebraic curve $C$ of genus $g$. We start by describing the degeneracy locus structure of two-pointed Brill-Noether varieties. 
 Given a linear series $\ell=(L,V)$ in $G^r_d(C)$ and a point $P\in C$, the definition of the vanishing sequence
\[
\bm{a}^{\ell}(P) = \left( 0\leq a^\ell_0(P) < \dots < a^\ell_r(P)\leq d \right)
\]
from the introduction can be equivalently phrased by saying that $\bm{a}^{\ell}(P)$ is the maximal sequence verifying the condition
\[
\dim \left(  V\cap H^0\Big(C,L\big(-a^\ell_{r+1-i}(P)\cdot P\big)\Big) \right) \geq i \qquad \mbox{for $1\leq i\leq r+1$.}
\]
Fixing two points $P$ and $Q$ in $C$ and two sequences $\ba$ and $\bb$, 
the variety of linear series $G^{\ba,\bb}_d(C, P, Q)$ is therefore defined by the conditions 
\begin{align}
\label{sections}
\begin{split}
  \dim\left( V\cap H^0(C,L(-a_{r+1-i} P)\right) &\geq i  \\
 \mbox{and}\quad \dim\left( V\cap H^0(C,L(-b_{r+1-i} Q)\right) &\geq i
\end{split}
\qquad\mbox{for all $1\leq i\leq r+1$.}
\end{align}
 We will construct $G^{\ba, \bb}_d(C, P, Q)$ as a degeneracy locus of type $\Omega_{\bp,\bq}$ inside a certain Grassmann bundle $\pi\colon\bGr \to \Pic^d(C)$, with indices $\bp$ and $\bq$ determined  below.

The construction generalizes the description of $W^r_d(C)$ reviewed in the introduction.  As before, choose $n\geq 0$ large enough so that line bundles of degree $d+n-b_r$ are non-special, that is, $n\geq 2g-1-d+b_r$.  Fix a Poincar\'e line bundle $\cL$ on $C\times\Pic^d(C)$, normalized so that $\cL|_{\{P\}\times\Pic^d(C)}$ is trivial.  Let $\pi_1$ and $\pi_2$ be the projections from $C\times \Pic^d(C)$ to $C$ and $\Pic^d(C)$, and set
\begin{align*}
  \cE_{j} &:= (\pi_2)_*\left(\cL\otimes \pi_1^*\OO_C(nP-b_{j-1}Q)\right) \quad \text{ and } \\
   \cF_{i} &:= (\pi_2)_*\left(\cL\otimes\pi_1^*\OO_{(n+a_{r+1-i})P}\right)
\end{align*}
for $1\leq i,j\leq r+1$.  The sheaf $\cE_j$ is a vector bundle of rank
\[
  p_j := \rk(\cE_j) = n+d-b_{j-1}+1-g,
\]
and $\cF_i$ is a vector bundle of rank
\[
  q_i:= \rk(\cF_i) = n+a_{r+1-i}.
\]
One thus obtains a sequence
\[
  \cE_{r+1} \hookrightarrow \cE_{r} \hookrightarrow \cdots \hookrightarrow \cE_{1}=: \cE \xrightarrow{\varphi} \cF :=\cF_{1} \twoheadrightarrow \cF_{2} \twoheadrightarrow \cdots \twoheadrightarrow \cF_{r+1}
\]
of vector bundles over ${\rm Pic}^d(C)$. 

Define $\mathcal{V}:=\cE \oplus \cF$, with  natural maps $\cV \twoheadrightarrow \cF_i$, and with $\cE_j \hookrightarrow \cV$ included via the graph of $\varphi$.  
Consider the Grassmann bundle 
\[
\pi\colon\bGr\left(r+1, \cV \right)\rightarrow {\rm Pic}^d(C), 
\]
and let $\mathbb{S}$ be the tautological rank $r+1$ sub-bundle on $\bGr\left(r+1, \cV\right)$.  
The locus in $\bGr\left(r+1, \cV \right)$ defined by the conditions 
\begin{align*}
\dim(\bS\cap \cE_1)\geq r+1  \quad \mbox{and} \quad
    \dim \ker(\bS\rightarrow \cF_i) &\geq i  \quad\text{ for all $i$,} 
\end{align*}
coincides with the locus of linear series $(L,V)\in G^r_d(C)$ such that
\begin{align*}
  \dim\left( V\cap H^0(C,L(-a_{r+1-i} P-b_0 Q)\right) &\geq i  \quad\text{ for all $i$.}
\end{align*}
Imposing the additional conditions 
\[
\dim(\bS\cap \cE_j)\geq r+2-j  \quad\mbox{for all $j$,} 
\]
as in \eqref{Sconditions},
one obtains the locus of linear series also satisfying
\[
  \dim\left( V\cap H^0(C,L(-a_{0} P-b_{j-1} Q)\right) \geq r+2-j  \quad\text{ for all $j$,}
\] 
hence satisfying (\ref{sections}).  Thus $G^{\ba, \bb}_d(C,P,Q)$ can be identified with the degeneracy locus $\Omega_{\bp,\bq} \subseteq \bGr\left(r+1, \mathcal{V} \right)$.

We now study the image of $G^{\ba,\bb}_d(C,P,Q)$ in ${\rm Pic}^d(C)$ via the map $\pi$.  Let $W^{\ba,\bb}_d(C,P,Q)$ be the degeneracy locus $W_{\bp,\bq}$ in $\Pic^d(C)$ as in \S \ref{determinantalF}, that is,
\[
W^{\ba,\bb}_d(C,P,Q) := \left\{ L\in {\rm Pic}^{d}(C) \, | \, \dim \ker(\cE_{j} \rightarrow \cF_i)\geq 1+i-j
\right\}.
\]
Equivalently, $W^{\ba,\bb}_d(C,P,Q)$ is the locus of line bundles $L \in {\rm Pic}^{d}(C)$ such that 
\[
h^0\left(C,L(-a_{r+1-i} P-b_{j-1}Q)\right)\geq 1+i-j \qquad\mbox{for all $i,j$.}
\]
Recall the definition of the two partitions $\lambda$ and $\mu$ associated to the data $g,d,\ba,\bb$:
\begin{align*}
\begin{split}
 \lambda_i &:= n+ a_{r+1-i}-(r+1-i) \\
 \mu_i &:= n-b_{i-1}+i-1 -g+d-r
\end{split}
\qquad\mbox{for $1\leq i\leq r+1$.}
\end{align*}
From Proposition \ref{p.bir}, when $\lambda/\mu$ is a skew shape, one has $\pi(G^{\ba,\bb}_d(C,P,Q))=W^{\ba,\bb}_d(C,P,Q)$, and in this case $\pi \colon G^{\ba,\bb}_d(C,P,Q)\rightarrow W^{\ba,\bb}_d(C,P,Q)$ is birational.  In general, let $\ba'$ be the sequence defined as 
\[
a'_i := a_i+ \max\{0,  d -g -a_i -b_{r-i}\} \qquad\mbox{for all $i$.}
\]
The diagram $\lambda'/\mu$ is a skew shape by construction, where 
\[
\lambda'_i=n+ a'_{r+1-i}-(r+1-i) \qquad\mbox{for all $i$.}
\]
 If $\lambda/\mu$ is already a skew shape, then $\ba'=\ba$ and $\lambda'=\lambda$.
We have the following diagram
\[
\begin{tikzcd}
G^{\ba, \bb}_d(C,P,Q) \arrow[hookrightarrow]{r}  \arrow[rightarrow]{d} & \bGr\left(r+1, \mathcal{V} \right) \arrow[rightarrow]{d}{\pi}\\
W^{\ba',\bb}_d(C,P,Q) \arrow[hookrightarrow]{r}  & \Pic^d (C)
\end{tikzcd}
\]
fitting into the framework studied in \S \ref{determinantalF}, so we can apply Theorem \ref{t.deg}.  

Since the vector bundles $\mathcal{F}_i$ have trivial Chern classes and therefore the K-theoretic Chern classes $c(i,j)=c^K(\mathcal{F}_i -\mathcal{E}_{j})$ are equal to $c^K(\mathcal{-E}_{j})$, the determinantal formula in \eqref{eq:Delta} gives
\begin{align}
\label{DeltaBNinCK}
\begin{split}
\Delta_{\lambda/\mu}(c;\beta) &=
\left|  (1-\beta T)^{-\lambda_i+\mu_j} c^K_{\lambda_i-\mu_j+j-i} \left(\mathcal{-E}_{j}\right) \right|_{1\leq i,j\leq r+1}  \\
&= \left|  \sum_{k\geq 0} {\lambda_i-\mu_j +k-1 \choose k} \beta^k c^K_{\lambda_i-\mu_j+j-i+k} \left(\mathcal{-E}_{j}\right) \right|_{1\leq i,j\leq r+1}.
\end{split}
\end{align}
Recall that the expected dimension of the pointed Brill-Noether locus $G^{\ba, \bb}_d(C,P,Q)$ is
\begin{align*}
 \rho(g,r,d,\bm{a},\bm{b}) = g - \sum_{i=0}^{r} (g-d + a_{i}+b_{r-i}) = g - |\lambda/\mu|.
\end{align*}
Assume that $G^{\ba, \bb}_d(C,P,Q)$ has dimension equal to $\rho$.  Then by Theorem \ref{t.deg} it is Cohen-Macaulay~and 
\begin{align}
\label{BNinCK}
\pi_* \left[{G^{\ba,\bb}_d(C,P,Q)} \right] = \Delta_{\lambda/\mu}(c;\beta) \qquad \mbox{in $CK^*\left(\Pic^{d}(C)\right)$.} 
\end{align}

Similarly, assume $\lambda/\mu$ is a skew shape.  If ${W^{\ba,\bb}_d(C,P,Q)}$ has dimension equal to $\rho$, then it is Cohen-Macaulay, and has class given by \eqref{DeltaBNinCK} in $CK^*\left(\Pic^{d}(C)\right)$.

The determinant in \eqref{DeltaBNinCK} will be further manipulated in \S\ref{Euler}. We conclude this section with the following statement:

\begin{proposition}
\label{prop:aa'GW}
Let $(C,P,Q)$ be any smooth two-pointed curve of genus $g$.  
If both $G^{\ba, \bb}_d(C,P,Q)$ and $W^{\ba',\bb}_d(C,P,Q)$ have the expected codimension, then they are Cohen-Macaulay and
\begin{align*}
\pi_* \left[{G^{\ba,\bb}_d(C,P,Q)} \right] &= \left(-\beta\right)^{|\bm{a}'|-|\bm{a}|} \Delta_{\lambda'/\mu}(c;\beta)\\
	&= \left(-\beta\right)^{|\bm{a}'|-|\bm{a}|} \left[{W^{\ba',\bb}_d(C,P,Q)} \right] \qquad\qquad \mbox{in $CK^*\left(\Pic^{d}(C)\right)$.}
\end{align*}
\end{proposition}

\begin{proof}
For the first equality, we start by observing that for partitions $\lambda$ and $\mu$, the determinant $|c^K_{\lambda_i-\mu_j+j-i}|$ vanishes, unless $\lambda/\mu$ is a skew shape.  Indeed, if $\lambda_k<\mu_k$ for some $k$, then the matrix is singular, since it has $0$ in position $(i,j)$, for all $i\geq k \geq j$. 
Using \eqref{BNinCK}, the entries of the determinants $\Delta_{\lambda/\mu}(c;\beta)$ and $\Delta_{\lambda'/\mu}(c;\beta)$ from \eqref{DeltaBNinCK}
differ in those bottom rows $i$ where $\lambda_i<\mu_i$. For  $\Delta_{\lambda'/\mu}(c;\beta)$, such rows have entries equal to $1$ in position $(i,i)$, and $0$ in position $(i,j)$, for $j<i$.
For $\Delta_{\lambda/\mu}(c;\beta)$, expanding  by linearity in these bottom rows, one has vanishing contributions by the above observation, unless for each such row $i$, one considers the term with $k=-\lambda_i+\mu_i$. It follows that the nonzero contribution to the left-hand side is given by the determinant with entries equal to $(-\beta)^{-\lambda_i+\mu_i}$ in position $(i,i)$, and $0$ in position $(i,j)$, for $j<i$. The first equality follows since $-\lambda_i+\mu_i = a'_i-a_i$, for each $i$ such that $\lambda_i<\mu_i$.
The second equality follows from Theorem \ref{t.deg} and the fact that $\lambda'/\mu$ is a skew shape.
\end{proof}

\begin{remark}
When $\bm{b}=(0,\dots,r)$, the two-pointed Brill-Noether varieties  specialize to the one-pointed Brill-Noether varieties $G^{\ba}_d(C,P)$ and $W^{\ba}_d(C,P)$ which parametrize respectively linear series and line bundles on $C$ with imposed vanishing sequence $\ba$ at a single point $P$. In this case, the role of the flag of sub-bundles of $\mathcal{E}$ is redundant in the above construction, as it is enough to consider maps from $\mathcal{E}$ to the flag of quotients of $\mathcal{F}$.
With this description in place, the degeneracy locus formula from \cite{himn} suffices to compute the K-theory class  $\pi_*\left[{G^{\ba}_d(C,P)}\right]$. 
Explicit computations in the one-pointed case will be given in \S\S\ref{sec:1ptcase} and \ref{svt-one-pointed}.
\end{remark}

\section{Euler characteristics}
\label{Euler}

Our next goal is to give a formula for the Euler characteristic of the two-pointed Brill-Noether loci $G^{\ba,\bb}_d(C,P,Q)$.  In order to simplify the determinantal  formula (\ref{DeltaBNinCK}) for the K-classes of varieties of linear
series, we  prove some general lemmas on K-theoretic Chern classes and apply them to the bundles $\mathcal{E}_i$ in  \S \ref{GandW}.

Throughout this section, we specialize at $\beta=-1$, and take all Chow and numerical groups with coefficients in $\mathbb{Q}$.

\begin{lemma}\label{l.ch}
Suppose a rank-$r$ vector bundle $E$ has $\ch(E)_i=0$ for $i>1$.  Then $\ch\left(c^K_i(E)\right)= c_i(E)$. 
\end{lemma}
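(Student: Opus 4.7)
The strategy is to invoke the splitting principle and reduce the statement to an identity of power sums in Chern roots. First, formally write $E = L_1\oplus\cdots\oplus L_e$, with K-theoretic Chern roots $L_k$ and cohomological Chern roots $x_k = c_1(L_k)$. The hypothesis $\ch(E)_i = 0$ for $i>1$ says exactly that the power sums $P_m(x) := \sum_{k=1}^e x_k^m$ vanish for every $m\geq 2$, while $P_1(x) = c_1(E)$.

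Second, the splitting principle identifies
$$c^K_i(E) = e_i(1-L_1^{-1},\ldots, 1-L_e^{-1}),$$
the relevant convention being $c^K_1(L) = 1-L^{-1}$ for a line bundle $L$. Applying the Chern character, which is a ring homomorphism $K^\circ(X)\to H^*(X,\QQ)$, yields
$$\ch(c^K_i(E)) = e_i(y_1,\ldots, y_e), \qquad y_k := 1-e^{-x_k}.$$
Since $c_i(E) = e_i(x_1,\ldots, x_e)$, the lemma reduces to the symmetric-function identity $e_i(y) = e_i(x)$.

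The key step is then to show $P_m(y) = P_m(x)$ for every $m\geq 1$; by Newton's identities this forces $e_i(y) = e_i(x)$ for all $i$. Expanding
$$(1-e^{-x})^m = x^m + \sum_{\ell\geq 1} a_{m,\ell}\, x^{m+\ell}$$
as a power series with rational coefficients $a_{m,\ell}$, and summing over $k$, one obtains
$$P_m(y) = P_m(x) + \sum_{\ell\geq 1} a_{m,\ell}\, P_{m+\ell}(x).$$
The hypothesis annihilates every $P_{m+\ell}(x)$ with $m+\ell\geq 2$. This covers every correction term (since $m+\ell\geq m+1\geq 2$ when $m\geq 1$) and also covers the leading term $P_m(x)$ itself whenever $m\geq 2$, leaving $P_1(y) = P_1(x)$ and $P_m(y) = 0 = P_m(x)$ for $m\geq 2$.

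The argument is essentially formal, so there is no serious obstacle; the only point requiring care is verifying that $c^K_i(E) = e_i(1-L_k^{-1})$ is indeed the splitting-principle expression compatible with the paper's normalization of K-theoretic Chern classes, which is determined by the line-bundle case and agrees with the convention used in the degeneracy locus formulas of Section~\ref{determinantalF}.
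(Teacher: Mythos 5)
Your proof is correct and follows essentially the same route as the paper's: reduce via the splitting principle and $\ch(c^K_1(L))=1-\ee^{-c_1(L)}$ to the symmetric-function identity $e_i(1-\ee^{-x_1},\ldots,1-\ee^{-x_e})=e_i(x_1,\ldots,x_e)$ modulo the higher power sums, which is exactly the content of the paper's Lemma~\ref{l.powers}, proved there too by comparing power sums and invoking Newton's identities. The only cosmetic difference is that you verify $p_m(y)=p_m(x)$ for all $m$ and then conclude directly, while the paper phrases the same computation as a congruence modulo the ideal $(p_2,\ldots,p_n)+(x_1,\ldots,x_n)^{n+1}$.
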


That is, if the Chern character of $E$ is $\ch(E) = r + c_1(E)$, then K-theory Chern classes agree with cohomology Chern classes under the Chern character isomorphism.

\begin{proof}
First recall that the Chern class of a line bundle has Chern character $\ch\left(c^K_1(L)\right) = 1 - \ee^{-c_1(L)}$, where $\ee^x$ is the formal power series $\sum_{k\geq 0} \frac{x^k}{k!}$.  Now let $L_1,\ldots,L_r$ be the K-theoretic Chern roots of~$E$, i.e., line bundle classes so that $c^K(E) = c^K(L_1)\cdots c^K(L_r)$.   Then
\begin{align*}
  \ch\left(c^K_i(E)\right) &= \ch \left( e_i \left( c^K_1(L_1),\ldots,c^K_1(L_r) \right) \right) \\
                &= e_i \left( \ch \left( c^K_1(L_1) \right), \ldots, \ch \left( c^K_1(L_r) \right) \right) \\
                &= e_i\left( 1-\ee^{-c_1(L_1)},\ldots, 1-\ee^{-c_1(L_r)} \right),
\end{align*}
where $e_i(x_1,\ldots,x_r)$ is the $i$-th elementary symmetric polynomial.  The lowest-degree term in the last line is equal to $c_i(E)$.  The higher-degree terms vanish, thanks to the hypothesis $\ch(E)_i=0$, for $i>1$, and Lemma~\ref{l.powers}.
\end{proof}

Given formal variables $x_1,\dots,x_n$, let $e_i(x_1,\dots,x_n)$  be the $i$-th elementary symmetric polynomial,  and  let $p_i (x_1,\dots,x_n):= x_1^i + \cdots +x_n^i$  be the $i$-th power sum symmetric polynomial.

\begin{lemma}
\label{l.powers}
Consider the ideal $I:=(p_2,\ldots,p_n) + (x_1,\ldots,x_n)^{n+1}$ in  $\mathbb{Q}\llbracket x_1,\dots,x_n\rrbracket$.  For $1\leq i\leq n$, one has
\[
  e_i\left(1-\ee^{-x_1},\ldots,1-\ee^{-x_n}\right) \equiv e_i\left(x_1,\ldots,x_n\right) \qquad\mbox{modulo $I$.}
\]
\end{lemma}

\begin{proof}
Write $e_i:=e_i(x_1,\dots,x_n)$ and $\overline{e}_i:=e_i\left(1-\ee^{-x_1},\ldots,1-\ee^{-x_n}\right)$, and similarly, write $p_i:=p_i(x_1,\dots,x_n)$ and $\overline{p}_i:=p_i\left(1-\ee^{-x_1},\ldots,1-\ee^{-x_n}\right)$. 
Since  
\[
p_1\left(1-\ee^{-x_1},\ldots,1-\ee^{-x_n}\right)= \sum_{k\geq 1} (-1)^{k-1}\frac{p_k}{k!}
\]
and $p_k\in (x_1,\ldots,x_n)^{n+1}$ for $k>n$, we have $\overline{p}_1 \equiv p_1$ modulo $I$.
It follows from Newton's identities that 
\begin{align*}
\overline{e}_i \equiv \frac{\overline{p}_{1}^i}{i!}  \mod (\overline{p}_2,\dots,\overline{p}_n) \qquad\mbox{and}\qquad
{e_i} \equiv \frac{{p}_{1}^i}{i!}  \mod ({p}_2,\dots,{p}_n)
\end{align*}
for  $1\leq i\leq n$.
Since $\overline{p}_2,\dots,\overline{p}_n \in I$, we conclude that 
$\overline{e}_i\equiv  \dfrac{\overline{p}_{1}^i}{i!} \equiv \dfrac{{p}_{1}^i}{i!}  \equiv e_i \mod I$,
for each $i$.
\end{proof}

Let $(C,P,Q)$ be a smooth two-pointed curve of genus $g$, and consider the vector bundles $\cE_i$ from \S\ref{GandW}.  Lemma~\ref{l.ch} applies to these bundles.  Indeed, modulo numerical (or homological) equivalence, 
the Chern classes of $-\cE_i$ are
\begin{align*}
 c_j\left(-\cE_i\right) = \frac{\theta^j}{j!},
\end{align*}
where $\theta$ is the cohomology class of the theta divisor.  (The proof given in \cite[\S VII]{acgh} is for singular cohomology, but it works as well in numerical or homological equivalence.)  
Equivalently, $\ch\left(-\cE_i\right)={\rm rank}\left(-\cE_i\right)+\theta$.  We therefore have
\begin{align}
\label{e.chE}
  \ch\Big( c^K_j\left(-\cE_i\right) \Big) = \frac{\theta^j}{j!}.
\end{align}

Now we can compute the Euler characteristic of the loci $G^{\ba,\bb}_d(C,P,Q)$ via Hirzebruch-Riemann-Roch.  The Todd class of $\Pic^d(C)$ is trivial, so
\begin{align*}
  \chi\left(\OO_{G^{\ba,\bb}_d(C,P,Q)}\right) &= \int_{\Pic^d(C)}  \ch \left( \pi_*\left[\OO_{G^{\ba,\bb}_d(C,P,Q)}\right] \right). 
\end{align*}  
Combining \eqref{BNinCK} and \eqref{e.chE} with the specialization of \eqref{DeltaBNinCK} at $\beta=-1$, the Euler characteristic is 
\begin{align}
\label{chiab}
\chi \left(\OO_{G^{\ba,\bb}_d(C,P,Q)} \right)= \int_{\Pic^d(C)}  \left| (1+T)^{-g+d-a_{r+1-i}-b_{j-1}+j-i} c_{g-d+a_{r+1-i}+b_{j-1}} \right|_{1\leq i,j\leq r+1}. 
\end{align}
From the Poincar\'e formula $\int \theta^g = g!$, it follows that the Euler characteristic is $g!$ times the coefficient of $\theta^g$ in the expansion of the determinant.  The next step is to analyze this expansion.

Let $\rho:=\rho(g,r,d,\ba,\bb)$, and recall that 
\[
\lambda_i-\mu_j +j-i= g-d+a_{r+1-i}+b_{j-1}.
\]
If we expand the operators $(1+T)^{-\lambda_i+\mu_j}$ in powers of $T$, the constant term is the cohomology class 
\begin{align}
\label{Wwhenkis0}
\left|  c_{\lambda_i-\mu_j +j-i}\right|_{1\leq i,j\leq r+1}, 
\end{align}
and is a multiple of $\theta^{g-\rho}$ (possibly zero).  The determinant in \eqref{chiab} is obtained by applying the operator $(1+T)^{\mu_j}$ to the $j$-th column of the  matrix in \eqref{Wwhenkis0}, and the operator $(1+T)^{-\lambda_i}$ to its $i$-th row. With binomial coefficients for negative integers $-s$  given by $ \binom{-s}{k}=\frac{-s(-s-1)\cdots(-s-k+1)}{k!}$, for $k\geq 0$, we have by linearity:
\begin{align*}
\sum_{|l/m|=|\lambda/\mu|+\rho } \left( \prod_{i=1}^{r+1} \binom{\mu_i}{\mu_i-m_i}\binom{-\lambda_i}{l_i-\lambda_i} \right)  \left| T^{l_i-\lambda_{i} + \mu_{j}-m_j}c_{\lambda_i-\mu_j +j-i} \right|_{1\leq i,j\leq r+1},
\end{align*}
the sum being over all sequences $l$ and $m$ with $l_i\geq \lambda_i$ and $m_i\leq \mu_i$.  (Here $l$ and $m$ are not required to be partitions, but we still use the notation $|l/m| = \sum (l_i-m_i)$.)

This proves Theorem~\ref{t.BN}.  More precisely, we have proved:

\begin{theorem}
\label{t.euler-two-pointed}
Let $(C,P,Q)$ be any smooth two-pointed curve of genus $g$.  
If $G^{\ba,\bb}_d(C,P,Q)$ has dimension equal to $\rho$, then the Euler characteristic
$\chi\left(\OO_{G^{\bm{a},\bm{b}}_d(C,P,Q)}\right)$ equals
\begin{align*}
\sum_{|l/m| =|\lambda/\mu|+\rho } g! \left( \prod_{i=1}^{r+1} \binom{\mu_i}{\mu_i-m_i}\binom{-\lambda_i}{l_i-\lambda_i} \right)  \left| \frac{1}{(l_i-m_j +j-i)!} \right|_{1\leq i,j\leq r+1}.
\end{align*}
Assume furthermore that $\lambda_i\geq \mu_i$ for all $i$.  If $W^{\bm{a},\bm{b}}_d(C,P,Q)$ has dimension equal to $\rho$, then 
\[
\chi\left(\OO_{W^{\bm{a},\bm{b}}_d(C,P,Q)}\right) = \chi\left(\OO_{G^{\bm{a},\bm{b}}_d(C,P,Q)}\right).
\]
\end{theorem}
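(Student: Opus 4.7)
The plan is to compute $\chi(\mathcal{O}_G)$ for $G = G^{\ba,\bb}_d(C,P,Q)$ by applying Hirzebruch-Riemann-Roch on $\Pic^d(C)$ to the pushforward class $\pi_*[\mathcal{O}_G]$ already determined in \eqref{BNinCK} of Section~\ref{GandW}. Since $\Pic^d(C)$ is a torsor over an abelian variety, its Todd class is trivial, so
\[
\chi\left(\mathcal{O}_G\right) = \int_{\Pic^d(C)} \ch\bigl(\pi_*[\mathcal{O}_G]\bigr),
\]
and the problem reduces to expanding the determinantal expression at $\beta = -1$, translating the entries from K-theory into cohomology, and extracting the coefficient of $\theta^g$.

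The key translation step is to show that each K-theoretic Chern class $c^K_k(-\cE_j)$ appearing in the determinant becomes $\theta^k/k!$ under the Chern character. First I would establish that $\ch(-\cE_j) = \rk(-\cE_j) + \theta$, which follows from the classical identity $c_j(-\cE_i) = \theta^j/j!$ modulo numerical equivalence (cf.\ \cite[\S VII]{acgh}); then Lemma~\ref{l.ch} yields $\ch\bigl(c^K_k(-\cE_j)\bigr) = \theta^k/k!$ via an expansion of elementary symmetric polynomials in the Chern roots and cancellation of higher-degree terms through Lemma~\ref{l.powers}. With this substitution, the entries of the determinant become polynomials in $\theta$; the operators $(1+T)^{\mu_j}$ on columns and $(1+T)^{-\lambda_i}$ on rows then expand by binomial coefficients into linear combinations of terms of the form $T^{l_i-\lambda_i+\mu_j-m_j} c_{\lambda_i-\mu_j+j-i}$. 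By multilinearity of the determinant in its rows and columns, combined with the Poincar\'e formula $\int\theta^g = g!$, only those terms with total Chern-class degree equal to $g = |\lambda/\mu| + \rho$ survive the integration, producing exactly the sum indexed by integer sequences $l \geq \lambda$ and $m \leq \mu$ with $|l/m| = |\lambda/\mu| + \rho$.

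For the second assertion, I would appeal to Theorem~\ref{t.deg}(ii) together with the discussion at the end of \S\ref{GandW}: when $\lambda/\mu$ is a skew shape and $W = W^{\ba,\bb}_d(C,P,Q)$ has expected dimension $\rho$, both $[\mathcal{O}_W]$ and $\pi_*[\mathcal{O}_G]$ are represented by the same determinantal class \eqref{BNinCK} in $K_\circ(\Pic^d(C))$, whence $\chi(\mathcal{O}_W) = \chi(\mathcal{O}_G)$ follows immediately from HRR. The main obstacle I anticipate is bookkeeping: carefully tracking the identity $\lambda_i - \mu_j + j - i = g - d + a_{r+1-i} + b_{j-1}$ together with the signs and exponents produced by specializing the operators $(1-\beta T)$ at $\beta=-1$, so that the final sum is indexed correctly by the condition $|l/m| = |\lambda/\mu| + \rho$ rather than some shifted variant.
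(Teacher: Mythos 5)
Your proposal follows the paper's own proof essentially step for step: Hirzebruch--Riemann--Roch with trivial Todd class on $\Pic^d(C)$, the translation $\ch(c^K_k(-\cE_j))=\theta^k/k!$ via Lemmas~\ref{l.ch} and~\ref{l.powers}, binomial expansion of the operators $(1+T)^{\mu_j}$ and $(1+T)^{-\lambda_i}$ by multilinearity, and extraction of the coefficient of $\theta^g$ via $\int\theta^g=g!$, with the second assertion handled by the coincidence of the two determinantal classes from \S\ref{GandW}. The argument is correct and matches the paper's approach.
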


\section{Determinantal and tableau formulas}
\label{Exs}

In this section, we will give a simplified expression for the Euler characteristic of the loci $G^{\bm{a},\bm{b}}_d(C,P,Q)$, expressing it as a weighted enumeration of standard Young tableaux, by performing a combinatorial analysis of the sum.  Along the way, we find a nonemptiness criterion for these loci, stated in Proposition~\ref{non-emptiness}.  Then we examine several special cases of particular interest.

We use the convention that a partition $\lambda$ corresponds to the shape with $\lambda_i$ boxes in the $i$-th row, where rows are indexed from top to bottom. There is a containment of shapes $\mu\subseteq \lambda$ when  two partitions $\lambda$ and $\mu$ satisfy $\lambda_i\geq \mu_i$ for all $i$. The {\it skew Young diagram}  $\lambda/\mu$ is represented as the complement of $\mu$ in~$\lambda$. 
 A \emph{standard Young tableau} on a skew shape $\lambda/\mu$ is a filling of the boxes of $\lambda/\mu$ by numbers $1,\ldots,|\lambda/\mu|$ such that the entries in each row and in each column are strictly increasing.  The number of standard Young tableaux  on $\lambda/\mu$ is commonly denoted by $f^{\lambda/\mu}$, and is given by the determinantal formula 
\begin{equation}
\label{e.standard-tableaux}
f^{\lambda/\mu} = |\lambda/\mu|! \left| \frac{1}{(\lambda_i-\mu_j+j-i)! }\right|_{1\leq i,j\leq r+1}
\end{equation}
(see \cite{aitken}).  
For example, for $\lambda=(3,1), \mu=(1,0)$, one has
\[
\lambda/\mu=
\ytableausetup{mathmode, boxsize=1.5em}
\begin{ytableau}
\none &  & \\
& \none & \none
\end{ytableau}
\qquad\mbox{and}\qquad f^{\lambda/\mu}= 3! \left| \begin{array}{cc} \dfrac{1}{2!} & \dfrac{1}{4!} \\[9pt]
0 & 1 \end{array} \right|=3.
\]

We extend the above notation to arbitrary sequences $l=(l_1,\ldots,l_{r+1})$ and $m=(m_1,\ldots, m_{r+1})$ of nonnegative integers, writing $l/m$ for a ``generalized skew diagram'' --- note that  we allow the differences $l_i-m_i$ to be negative.  Extending the notation for skew shapes, we will write
\[
  |l/m| := \sum_{i=1}^{r+1} (l_i-m_i)
\] 
and
\begin{equation}
\label{e.standard-tableaux-phi}
  f^{l/m} := |l/m|! \left| \frac{1}{(l_i-m_j+j-i)!}\right|_{1\leq i,j\leq r+1}.
\end{equation}

There are two basic facts underpinning our arguments in this section:

\medskip

\begin{enumerate}
\item[{\bf Fact 1.}] Suppose $\lambda$ and $\mu$ are {\em partitions} of length $r+1$.  Then
\[
  f^{\lambda/\mu} = |\lambda/\mu|! \left| \frac{1}{(\lambda_i-\mu_j+j-i)! }\right|_{1\leq i,j\leq r+1}
\]
is nonzero if and only if $\lambda_i\geq \mu_i$ for all $i$.  (Here one should read reciprocals of factorials of negative integers as $0$.)

\medskip

\item[{\bf Fact 2.}] Suppose $\lambda=(\lambda_1,\ldots,\lambda_{r+1})$ is a partition, and $l=(l_1,\ldots,l_{r+1})$ is any sequence of nonnegative integers such that $l_i\geq \lambda_i$ for all $i$.  If the sequence $(l_1-1,\ldots,l_{r+1}-(r+1))$ consists of distinct integers, and $w$ is the permutation which sorts them into decreasing order, then the sequence $\lambda^+_i = l_{w(i)}-w(i)+i$ is a partition with $\lambda^+_i\geq \lambda_i$ for all $i$.
\end{enumerate}

\medskip

\begin{proof}[Proof of Theorem~\ref{t.euler-two-pointed-tableau}]
We can rewrite the formula of Theorem~\ref{t.euler-two-pointed} as
\begin{align}
\chi\left(\OO_{G^{\bm{a},\bm{b}}_d(C,P,Q)}\right) &= \sum_{|l/m|=|\lambda/\mu| +\rho } 
\left( \prod_{i=1}^{r+1} \binom{\mu_i}{\mu_i-m_i}\binom{-\lambda_i}{l_i-\lambda_i} \right)  f^{l/m} \nonumber \\
 &= \sum_{|l/m|=|\lambda/\mu| +\rho } 
\left( \prod_{i=1}^{r+1} \binom{\mu_i}{\mu_i-m_i}\binom{l_i-1}{l_i-\lambda_i} \right) (-1)^{|l/\lambda|} f^{l/m}, \label{e.euler-two-pointed-phi}
\end{align}
since $|\lambda/\mu|+\rho=g$.  (Recall that the sums are over sequences $l$ and $m$ such that $l_i\geq \lambda_i$ and $m_i\leq \mu_i$ for all $i$.)

When the determinant
\[
\frac{1}{|l/m|!}f^{l/m}= \left| \frac{1}{(l_i-m_j+j-i)!}\right|_{1\leq i <j\leq r+1}
\]
is nonzero, there is a unique permutation $w\in S_{r+1}$ acting on the columns of the matrix which sorts the entries across rows into decreasing order; equivalently, 
\begin{equation}
\label{eqn:i-M}
\mu^-_j:=m_{w(j)} +j-w(j) 
\end{equation}
defines a partition $\mu^-\subseteq\mu$, using a variation of Fact 2.  Then $f^{l/m} = (-1)^{\sgn(w)} \, f^{l/\mu^-}$, and
collecting terms gives
\begin{equation}
\label{eqn:euler-mu}
\chi\left(\mathcal{O}_{G^{\bm{a},\bm{b}}_d(C,P,Q)}\right)
= \sum_{\mu^-\subseteq\mu}  \alpha^{\mu/\mu^-} \left( \prod_{i=1}^{r+1}\binom{l_i-1}{l_i-\lambda_i} \right) \,(-1)^{|l/\lambda|} f^{l/\mu^-} ,
\end{equation}
where the sum is over partitions $\mu^-\subseteq\mu$ and sequences $l=(l_1,\ldots,l_{r+1})$ of nonnegative integers such that $l_i\geq \lambda_i$ for all $i$, $|\mu/\mu^-| +|l/\lambda|=\rho$, and
\begin{align}
 \alpha^{\mu/\mu^-} &:= \sum_{w\in S_{r+1}}(-1)^{\sgn(w)} \left( \prod_{j=1}^{r+1} \binom{\mu_{w(j)}}
 {\mu_{w(j)}-\mu^-_{j}+j-w(j)}\right) \nonumber \\
 &=  \left| \binom{\mu_i}{\mu_i-\mu^-_j + j-i}\right|_{1\leq i,j\leq r+1}.   \label{eqn:M-formula}
\end{align}

Similarly, using Fact 2 again, when the determinant 
\[
\frac{1}{|l/\mu^-|!}f^{l/\mu^-}= \left| \frac{1}{(l_i-\mu^-_j+j-i)!}\right|_{1\leq i <j\leq r+1}
\]
is nonzero, there is a unique permutation $w\in S_{r+1}$ acting on the rows which sorts the entries into decreasing order down columns.  Equivalently, 
\begin{equation}
\label{eqn:i-L}
\lambda^+_i:= l_{w(i)}-w(i)+i,  \qquad\text{ for } 1\leq i\leq r+1,
\end{equation}
defines a partition $\lambda^+\supseteq \lambda$.  Then $f^{l/\mu^-} = (-1)^{\sgn(w)} \, {f^{\lambda^+/\mu^-}}$.  Collecting terms gives
\begin{equation}
\label{eqn:euler-lambda}
\left( \prod_{i=1}^{r+1}\binom{l_i-1}{l_i-\lambda_i} \right) \, f^{l/\mu^-} = \sum_{\lambda^+\supseteq \lambda}  \, \zeta^{\lambda^+/\lambda}  \cdot f^{\lambda^+/\mu^-} ,
\end{equation}
where the sum is over partitions $\lambda^+$ of length  $r+1$ (so $\lambda^+/\mu^-$ is a skew diagram) such that $|\lambda^+/\lambda|=|l/\lambda|$, and 
\begin{align}
 \zeta^{\lambda^+/\lambda} &:= \sum_{w\in S_{r+1}}(-1)^{\sgn(w)} 
 \left( \prod_{i=1}^{r+1} \binom{\lambda^+_{i}+w(i)-i-1}
 {\lambda^+_{i}-\lambda_{w(i)}+w(i)-i}\right) \nonumber \\
  &= \left| \binom{\lambda^+_i+j-i-1}{\lambda^+_i-\lambda_j+j-i}\right|_{1\leq i,j\leq r+1}.   \label{eqn:L-formula}
\end{align}

The binomial determinants $\alpha^{\mu/\mu^-}$ and $\zeta^{\lambda^+/\lambda}$ enumerate tableaux, by the method of Gessel-Viennot.  A \emph{(column) semi-standard Young tableau} on a given shape is a filling of the boxes by positive integers such that the entries are weakly increasing across each row and strictly increasing down each column.  A filling is a \emph{row semi-standard Young tableau} if the transpose condition holds: the entries are strictly increasing across each row and weakly increasing down each column.  A \emph{strict Young tableau} is a filling whose entries are strictly increasing across each row and down each column. 

By \cite[Theorem 14]{gv}, the determinant $\alpha^{\mu/\mu^-}$ is equal to the number of row semi-standard Young tableaux on $\mu/\mu^-$ whose entries in row $i$ are between $1$ and $\mu_i$, inclusive, and the determinant $\zeta^{\lambda^+/\lambda}$ is equal to the number of semi-standard Young tableaux on $\lambda^+/\lambda$ whose entries in row $i$ are between $-\lambda_i$ and $-1$, inclusive.  Such tableaux are in bijection with strict Young tableaux on $\lambda^+/\lambda$ whose entries in row $i$ are between $1$ and $\lambda^+_i-1$: given a semi-standard tableau on $\lambda^+/\lambda$ with $i$-th row entries in $\{-\lambda_i,\ldots,-1\}$, add to each entry the index of its column to obtain a strict tableau with $i$-th row entries in $\{1,\ldots,\lambda^+_i-1\}$.  Combining equations \eqref{eqn:euler-mu} and \eqref{eqn:euler-lambda} concludes the proof of Theorem~\ref{t.euler-two-pointed-tableau}.
\end{proof}

Next we will prove a nonemptiness criterion for the variety $G^{\bm{a},\bm{b}}_d(C, P, Q)$, using Corollary \ref{c.nonempty}.  By setting $\beta=0$ in \eqref{BNinCK}, and passing to numerical equivalence, we obtain a variation on the formula for the cohomology class of $W^r_d(C)$:

\begin{proposition}
\label{numclassW}
Let $(C,P,Q)$ be a smooth two-pointed curve of genus $g$.  If $\lambda_i\geq \mu_i$ for all $i$ and $W^{\ba,\bb}_d(C,P,Q)$ has dimension equal to $\rho$, then its numerical class is
\[
\left[W^{\ba,\bb}_d(C,P,Q)\right] = \left|  \frac{1}{(a_{r-i} + b_{j} +g -d)!} \right|_{0\leq i,j\leq r} 
\theta^{g-\rho}.
\]

If $G^{\ba,\bb}_d(C,P,Q)$ has dimension equal to $\rho$, then $\pi_*[G^{\ba,\bb}_d(C,P,Q)]$ equals $[W^{\ba,\bb}_d(C,P,Q)]$ when $\lambda/\mu$ is a skew diagram, and vanishes otherwise. 
\end{proposition}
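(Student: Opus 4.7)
The plan is to deduce Proposition~\ref{numclassW} directly from the specialization of equation (\ref{BNinCK}) at $\beta=0$, which transports the formula from $CK^*(\Pic^d(C))$ to the Chow ring $A^*(\Pic^d(C))$. Under this specialization, the K-theoretic Chern classes $c^K_k(-\cE_j)$ become the ordinary Chow Chern classes $c_k(-\cE_j)$, and only the $k=0$ term of each power series $(1-\beta T)^{-\lambda_i+\mu_j}c^K_{\lambda_i-\mu_j+j-i}(-\cE_j)$ survives. So the first step is to write
\[
  \pi_*\left[G^{\ba,\bb}_d(C,P,Q)\right] = \left| c_{\lambda_i-\mu_j+j-i}(-\cE_j) \right|_{1\leq i,j\leq r+1}
\]
in $A^*(\Pic^d(C))$ whenever $G^{\ba,\bb}_d(C,P,Q)$ has dimension $\rho$, and the analogous identity for $[W^{\ba,\bb}_d(C,P,Q)]$ when $\lambda/\mu$ is a skew diagram and $W$ has dimension $\rho$ (both from Theorem~\ref{t.deg}).

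Next, I would pass to numerical equivalence and substitute the known formula $c_k(-\cE_j) = \theta^k/k!$ recalled in \S\ref{Euler}. A brief bookkeeping step — noting that for any permutation $\sigma$ the quantity $\sum_i(\lambda_i-\mu_{\sigma(i)}+\sigma(i)-i)$ is independent of $\sigma$ and equals $|\lambda/\mu| = g-\rho$ — shows that every term in the expanded determinant contributes the same power $\theta^{g-\rho}$. Factoring this power out and substituting $\lambda_i-\mu_j+j-i = g-d+a_{r+1-i}+b_{j-1}$, followed by the reindexing $i\mapsto i-1$, $j\mapsto j-1$, yields the claimed formula
\[
  \left[W^{\ba,\bb}_d(C,P,Q)\right] = \left|\frac{1}{(a_{r-i}+b_j+g-d)!}\right|_{0\leq i,j\leq r}\theta^{g-\rho}.
\]

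For the second assertion, when $\lambda/\mu$ is a skew diagram, combining the two displayed identities above gives $\pi_*[G^{\ba,\bb}_d(C,P,Q)] = [W^{\ba,\bb}_d(C,P,Q)]$ directly. The remaining point — vanishing of $\pi_*[G^{\ba,\bb}_d(C,P,Q)]$ when $\lambda/\mu$ fails to be a skew shape — is the formal determinantal argument already sketched at the end of \S\ref{GandW}: if $\lambda_k<\mu_k$, then for all $(i,j)$ with $i\geq k\geq j$ one has $\lambda_i-\mu_j+j-i<0$, so the corresponding Chern classes vanish. This produces a zero block of size $(r+2-k)\times k$ in the lower-left of a matrix of size $r+1$, and since $(r+2-k)+k>r+1$, the determinant is zero.

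The main obstacle is essentially bookkeeping — verifying that the degree count is uniform across permutations so that $\theta^{g-\rho}$ factors cleanly, and confirming the reindexing matches the stated formula. No deeper geometric input is needed beyond what has already been established in Theorem~\ref{t.deg} and the Chern class computation for $-\cE_j$.
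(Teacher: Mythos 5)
Your proposal is correct and follows essentially the same route as the paper: specialize \eqref{BNinCK} at $\beta=0$, pass to numerical equivalence using $c_k(-\cE_j)=\theta^k/k!$, observe that each permutation term has total degree $|\lambda/\mu|=g-\rho$ so that $\theta^{g-\rho}$ factors out, reindex via $\lambda_i-\mu_j+j-i=g-d+a_{r+1-i}+b_{j-1}$, and handle the vanishing case by the zero-block (Fact 1) argument already noted at the end of \S\ref{GandW}. The paper's own justification is only a terse parenthetical; you have merely spelled out the same steps in full.
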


In comparing with \eqref{BNinCK}, note the shift of indexing of the matrix, and recall that the definitions of $\lambda$ and $\mu$ imply $\lambda_i-\mu_j+j-i = a_{r+1-i} + b_{j-1} +g -d$.  The vanishing statement follows algebraically from Fact 1, or geometrically from the fact that $\dim \pi(G^{\ba,\bb}_d(C,P,Q))< \dim (G^{\ba,\bb}_d(C,P,Q))$, unless $\lambda/\mu$ is a skew diagram; see also the specialization of Proposition \ref{prop:aa'GW} at $\beta=0$.

Now we can state the nonemptiness criterion.

\pagebreak[3]

\begin{proposition}
\label{non-emptiness}
Let $(C,P,Q)$ be a smooth two-pointed curve of genus $g$.  If
\begin{align}
\label{oss}
\rho' := g - \sum_{i=0}^r \max\{0, a_i+b_{r-i} +g-d \} \geq 0,
\end{align}
\nopagebreak
then the locus of special linear series $G^{\ba,\bb}_d(C, P, Q)$ is non-empty.
\end{proposition}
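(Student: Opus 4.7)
The plan is to apply Corollary~\ref{c.nonempty} to the modified sequence $\ba'$, which was constructed precisely so that $\lambda'/\mu$ is a bona fide skew diagram.

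Step one is to verify that the Brill-Noether number for the shifted data is the hypothesized $\rho'$. Using the definition $a'_i = a_i + \max\{0,\, d-g-a_i-b_{r-i}\}$, one checks immediately that $a'_i + b_{r-i} + g - d = \max\{0,\, a_i + b_{r-i} + g - d\}$, so
\[
  \rho(g,r,d,\ba',\bb) \;=\; g - \sum_{i=0}^r \max\{0,\, g - d + a_i + b_{r-i}\} \;=\; \rho'.
\]

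Step two is to exhibit $\Delta_{\lambda'/\mu}(c;0)\cdot[\Pic^d(C)]$ as a nonzero element of $A_*(\Pic^d(C))$. The Chern classes entering the determinant are those of $-\cE_j$, and in the Chow ring of $\Pic^d(C)$ one has $c_k(-\cE_j) = \theta^k/k!$. Expanding the determinant, the total $\theta$-degree of every term is $|\lambda'/\mu| = g - \rho'$, so the class factors as
\[
  \Delta_{\lambda'/\mu}(c;0)\cdot[\Pic^d(C)] \;=\; \left|\frac{1}{(\lambda'_i-\mu_j+j-i)!}\right|_{1\le i,j\le r+1}\theta^{g-\rho'} \;=\; \frac{f^{\lambda'/\mu}}{(g-\rho')!}\,\theta^{g-\rho'},
\]
via the Aitken identity~\eqref{e.standard-tableaux}. (This is the same computation recorded in Proposition~\ref{numclassW}, applied to $\ba'$.)

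The result then follows at once. The scalar $f^{\lambda'/\mu}$ is a strictly positive integer because $\lambda'/\mu$ is a genuine skew shape (Fact~1 of \S\ref{Exs}); and $\theta^{g-\rho'}\neq 0$ in $A_*(\Pic^d(C))$ whenever $\rho'\geq 0$, since $\theta^g = g!\cdot[\mathrm{pt}]$ by Poincar\'e's formula. Thus $\Delta_{\lambda'/\mu}(c;0)\cdot[\Pic^d(C)]\neq 0$, and Corollary~\ref{c.nonempty} yields nonemptiness of $W^{\ba',\bb}_d(C,P,Q)$, hence of $G^{\ba,\bb}_d(C,P,Q) = \Omega_{\bp,\bq}$. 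There is no real obstacle: the argument only reassembles pieces already in hand, and the only thing that requires attention is carrying the reindexing between $(a_i,b_i)$ and $(\lambda'_i,\mu_i)$ through the Jacobi-Trudi identity cleanly.
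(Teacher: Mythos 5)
Your proof is correct and follows essentially the same route as the paper's: pass to the modified sequence $\ba'$ so that $\lambda'/\mu$ is a genuine skew shape, identify $\rho(g,r,d,\ba',\bb)$ with $\rho'$, compute $\Delta_{\lambda'/\mu}(c;0)$ numerically as a positive multiple of $\theta^{g-\rho'}$ via Proposition~\ref{numclassW} and Fact~1, and conclude with Corollary~\ref{c.nonempty}. The only cosmetic difference is that you make the positivity of the determinantal coefficient explicit through the Aitken identity for $f^{\lambda'/\mu}$, where the paper simply cites Fact~1.
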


\pagebreak[3]

\noindent This  was first proved by Osserman, using degeneration techniques \cite{o}.  When $\bb=(0,1,\dots,r)$, it recovers the statement for the one-pointed case in \cite[Proposition 1.2]{eh}.

\begin{proof}
The nonemptiness of $G^{\bm{a}, \bm{b}}_d(C,P,Q)$ is equivalent to the nonemptiness of its image $W=W^{\bm{a}', \bm{b}}_d(C,P,Q)$ in ${\rm Pic}^d(C)$.  By Corollary~\ref{c.nonempty}, $W$ is nonempty when the class $\Delta_{\lambda'/\mu}(c;0)$ is nonzero.  By Proposition \ref{numclassW}, this class is numerically equivalent to
\begin{equation}\label{e.nonemptypf}
 \left|   \frac{1}{(a'_{r-i} + b_{j} +g -d)!} \right|_{0\leq i,j\leq r} 
\theta^{g-\rho'},
\end{equation}
where, as before, $\ba'$ is the sequence defined by
\[
a'_i := a_i+ \max\{0,  d -g -a_i -b_{r-i}\}.
\]
This means $\rho' = \rho(g,r,d,\ba',\bb)$.

Associating partitions $\lambda'$ and $\mu$ to the data $g,d,\ba',\bb$ as usual, the definition of $\ba'$ guarantees that $\lambda'_i\geq \mu_i$ for all $i$.  It follows that the determinantal coefficient is nonzero (see Fact 1), so the expression \eqref{e.nonemptypf} is nonzero if and only if $\rho'\geq 0$.  This is equivalent to the condition in \eqref{oss}. 
\end{proof}

Now we turn to some special cases.

\subsection{The curve case}
\label{rho1} 
Let us write $\lambda+\epsilon_i$ for the diagram obtained by adding one box to the right of the $i$-th row of $\lambda$, and $\mu-\epsilon_i$ for the diagram obtained by subtracting one box from the $i$-th row of $\mu$.  (This means the diagram $\lambda/(\mu-\epsilon_i)$ is obtained by {\em adding} one box to the left of the $i$-th row of $\lambda/\mu$.)

Now assume $\rho(g,r,d,\bm{a},\bm{b})=1$.  The reformulation in (\ref{e.euler-two-pointed-phi}) of Theorem \ref{t.BN} reduces to
\begin{align*}
\chi \left(\OO_{G^{\bm{a},\bm{b}}_d(C, P, Q)} \right) =
 \sum_{i=1}^{r+1} \mu_i\, f^{\lambda/(\mu-\epsilon_i)}
-\sum_{i=1}^{r+1} \lambda_i\, f^{(\lambda+\epsilon_i)/\mu}.
\end{align*}
By Fact 1, $f^{\lambda/(\mu-\epsilon_i)}$ vanishes when $\lambda/(\mu-\epsilon_i)$ is not a skew diagram, and $f^{(\lambda+\epsilon_i)/\mu}$ vanishes when $(\lambda+\epsilon_i)/\mu$ is not a skew diagram.  Using the identity 
\[
(r+1) (|\lambda/\mu|+1) f^{\lambda/\mu} = \sum_{i=1}^{r+1} (\lambda_i + r+2-i) f^{(\lambda+\epsilon_i)/\mu}  - \sum_{i=1}^{r+1} (\mu_i +r+1-i) f^{\lambda/(\mu-\epsilon_i)},
\]
for the number of standard skew Young tableaux, we recover \cite[Theorem 1.2]{clpt}.

\subsection{The one-pointed case} 
\label{sec:1ptcase}
When $\bm{b}=(0,\dots,r)$, the locus $G^{\bm{a},\bm{b}}_d(C, P, Q)$ is identical to the one-pointed locus 
\[
G^{\bm{a}}_d(C, P) := \left\{ \ell \in G^r_d(C) \, | \, \bm{a}^\ell(P) \geq \bm{a}\right\}.
\]
On the other hand, when the points $P$ and $Q$ collide together on the curve $C$, the locus $G^{\bm{a},\bm{b}}_d(C, P, Q)$ specializes to the locus of linear series $(L,V)\in G^r_d(C)$ such that 
\[
\dim \left( V\cap H^0(L(-(a_i + b_{r-j})P))\right)\geq 1+j-i.
\]
Fix $l$ such that $l_i\geq \lambda_i$ and $|l/\lambda|=\rho$.  By an application of the Vandermonde identity,
\[
 \left|  \frac{\theta^{l_i+j-i}}{(l_i+j-i)!} \right|_{1\leq i,j\leq r+1} 
= g! \frac{\prod_{1\leq i<j \leq r+1} (l_i-l_j+j-i)}{\prod_{i=1}^{r+1} (l_i+r+1-i)!},
\]
so Theorem~\ref{t.BN} reduces to
\[
\chi \left(\OO_{G^{\bm{a}}_d(C, P)}\right)
= \sum_{|l/\lambda|=\rho} 
{-\lambda_1 \choose l_1-\lambda_1} \cdots {-\lambda_{r+1} \choose l_{r+1}-\lambda_{r+1}} 
\times
g! \frac{\prod_{1\leq i<j \leq r+1} (l_i-l_j+j-i)}{\prod_{i=1}^{r+1} (l_i + r+1-i)!}.
\]
When in addition $\rho(g,r,d,\bm{a},\bm{b})=1$, this sum becomes
\[
\chi \left(\OO_{G^{\bm{a}}_d(C, P)}\right) = - g! \sum_{k=0}^r (g-d+r+a_k-k) \frac{\prod_{0\leq i<j\leq r} (a_j - a_i + \delta^k_j - \delta^k_i)}{\prod_{i=0}^r (g-d+r+a_i+\delta^k_i)!},
\]
where $\delta$ is the Kronecker delta.

\subsection{Set-valued tableaux and the one-pointed case} \label{svt-one-pointed}
In the one-pointed case, we can re-write the Euler characteristic in terms of numbers of certain tableaux. A \textit{set-valued tableau} on a  shape $\lambda$ is a labelling of the boxes of $\lambda$ by finite non-empty subsets of $\mathbb{N}$ such that the maximum element of the label of any box $(i,j)$ is at most the minimum element of the label at $(i,j+1)$, and smaller than the minimum element of the label at $(i+1,j)$ (see \cite{b}).  Given a nonnegative integer $\rho$, a \emph{$\rho$-standard set-valued tab\-leau} on $\lambda$ is a set-valued tableau on $\lambda$ such that the labels of the boxes of $\lambda$ are subsets of $\left\{1,\ldots,|\lambda|+\rho\right\}$ and each of $1,\ldots,|\lambda|+\rho$ appears exactly once.  (See \S\ref{Schub} for more about set-valued tableaux and the connection with Grothendieck polynomials.)

Chan and Pflueger conjectured a formula expressing the Euler characteristic of a two-pointed Brill-Noether locus via set-valued tableaux on a skew shape.  The following establishes the one-pointed version of their conjecture.\footnote{Chan and Pflueger have now proved their  conjecture using different methods in \cite{cp}.}

\begin{corollary}
\label{cor:CPconj}
Suppose $\dim G^{\bm{a}}_d(C,P)= \rho$, and let $\lambda$ be the partition corresponding to $\ba$.  Then 
\[
\chi\left(\mathcal{O}_{G^{\bm{a}}_d(C,P)}\right) = (-1)^\rho\cdot \# \{ \text{$\rho$-standard set-valued tableaux on $\lambda$}\}.
\] 
This is zero if and only if $W^{\bm{a}}_d(C,P)={\rm Pic}^d(C)$.
\end{corollary}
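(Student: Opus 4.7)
The plan is to specialize the general Euler characteristic formula to the one-pointed setting, rewrite it as a manifestly non-negative signed sum indexed by partitions, and then identify that sum with the count of $\rho$-standard set-valued tableaux on $\lambda$.

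\emph{Step 1: Reduction and sign extraction.} Take $\bm{b} = (0,1,\ldots,r)$: these conditions at $Q$ are automatic since every linear series has $a^\ell_i(Q) \geq i$, so $G^{\bm{a},\bm{b}}_d(C,P,Q) = G^{\bm{a}}_d(C,P)$. Under this choice, the partition $\mu$ from the two-pointed framework becomes the rectangle $(N^{r+1})$ with $N = n-g+d-r$, and $\lambda/\mu$ coincides, up to a column shift, with the Brill--Noether partition $\lambda$ of the corollary. In the closed form derived in the subsection ``The one-pointed case,'' rewrite $\binom{-\lambda_i}{l_i-\lambda_i} = (-1)^{l_i-\lambda_i}\binom{l_i-1}{l_i-\lambda_i}$; using $\sum(l_i-\lambda_i) = \rho$ this pulls out a uniform sign $(-1)^\rho$, leaving a manifestly non-negative expression with $|l| = g$.

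\emph{Step 2: Collapse to a single sum over $\lambda^+$.} The Vandermonde factor $g!\prod_{i<j}(l_i-l_j+j-i)/\prod_i(l_i+r+1-i)!$ equals the signed evaluation $f^l$ of Aitken's formula \eqref{e.standard-tableaux-phi} at the sequence $l$. Re-index the sum by the partition $\lambda^+\supseteq\lambda$ obtained by sorting $(l_i-i)$ into strictly decreasing order via Fact~2, and collect the binomial coefficients $\prod_i\binom{l_i-1}{l_i-\lambda_i}$ into the determinantal factor $\zeta^{\lambda^+/\lambda}$, exactly as in the proof of Theorem~\ref{t.euler-two-pointed-tableau}. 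This yields
\[
\chi(\OO_{G^{\bm{a}}_d(C,P)}) = (-1)^\rho \sum_{\substack{\lambda^+\supseteq\lambda\\|\lambda^+/\lambda|=\rho}} \zeta^{\lambda^+/\lambda}\cdot f^{\lambda^+}.
\]

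\emph{Step 3: Bijection with set-valued tableaux.} The heart of the proof is the combinatorial identity
\[
\sum_{\lambda^+\supseteq\lambda,\;|\lambda^+/\lambda|=\rho} \zeta^{\lambda^+/\lambda}\cdot f^{\lambda^+} = \#\{\rho\text{-standard SVT on }\lambda\}.
\]
I plan to establish this via a direct bijection between pairs $(T,S)$ --- with $T$ a standard Young tableau on $\lambda^+$ and $S$ a strict tableau on $\lambda^+/\lambda$ with $S(i,j)\in\{1,\ldots,\lambda^+_i-1\}$ --- and $\rho$-standard SVTs on $\lambda$. The value $S(i,j)$ records the column of row $i$ into which the entry $T(i,j)$ is absorbed as an ``extra,'' with a simultaneous rightward rearrangement of the existing entries of $T$ to ensure the resulting set-valued data satisfies the SVT strictness conditions. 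The main obstacle is checking that the strict/standard conditions on $(T,S)$ correspond precisely to the row/column strictness conditions of the SVT and that the construction is invertible; I expect to parallel the bijective methods underlying Buch's generating function $G_\lambda = \sum_T \beta^{|T|-|\lambda|} x^T$ and Lenart's Schur expansion of the symmetric Grothendieck polynomial.

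\emph{Step 4: Nonvanishing.} When $\lambda$ is nonempty, a $\rho$-standard SVT exists: take any SYT on $\lambda$ and load the surplus entries $|\lambda|+1,\ldots,|\lambda|+\rho$ into the bottom-right corner box (which has no right or downward neighbour). Hence the SVT count, and so $|\chi|$, vanishes iff $\lambda = \emptyset$. Finally, $\lambda = \emptyset$ iff $g-d+a_{r+1-i}+i-1\leq 0$ for all $i$, which by Riemann--Roch for generic $L\in\Pic^d(C)$ together with semi-continuity is equivalent to $W^{\bm{a}}_d(C,P)=\Pic^d(C)$.
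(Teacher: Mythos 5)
Your Steps 1, 2 and 4 track the paper: the reduction to
$\chi(\OO_{G^{\ba}_d(C,P)}) = (-1)^\rho\sum_{|\lambda^+/\lambda|=\rho}\zeta^{\lambda^+/\lambda}f^{\lambda^+}$
is exactly the specialization of Theorem~\ref{t.euler-two-pointed-tableau} that the paper performs, and your nonvanishing argument (load the surplus entries into the corner box of the last row, which has no neighbour to the right or below) correctly settles the final claim. The problem is Step 3, which is the entire content of the corollary and which you do not actually prove. You state the identity
$\sum_{\lambda^+}\zeta^{\lambda^+/\lambda}f^{\lambda^+} = \#\{\rho\text{-standard SVT on }\lambda\}$
and then describe a bijection only at the level of intent: ``$S(i,j)$ records the column of row $i$ into which the entry $T(i,j)$ is absorbed, with a simultaneous rightward rearrangement,'' followed by an explicit acknowledgment that checking well-definedness and invertibility is ``the main obstacle'' you ``expect'' to overcome. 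That is a plan, not an argument; as written the central combinatorial identity is unverified. Note also that your proposed bijection must be built for the $\zeta$ normalization, in which the strict tableau on $\lambda^+/\lambda$ has row-$i$ entries bounded by $\lambda^+_i-1$, whereas the Lenart/Buch coefficients you invoke bound the row-$i$ entries by $i-1$; reconciling these is an extra (if routine) step you have not addressed.

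The paper closes this gap without constructing any new bijection: it quotes Lenart's Schur expansion of the symmetric Grothendieck polynomial $G_\nu$, extracts the coefficient of the square-free monomial $x_1\cdots x_{|\nu|+\rho}$ on both sides to get
$\#\{\rho\text{-standard SVT on }\nu\}=\sum_{\nu^+}g^{\nu^+/\nu}f^{\nu^+}$
with $g^{\nu^+/\nu}$ counting strict tableaux with row-$i$ entries in $\{1,\ldots,i-1\}$, and then takes $\nu=\lambda'$, observing that conjugation carries strict tableaux on $\lambda^+/\lambda$ with row-$i$ bound $\lambda^+_i-1$ to strict tableaux on $\nu^+/\nu$ with row-$i$ bound $i-1$, so $\zeta^{\lambda^+/\lambda}=g^{\nu^+/\nu}$ while $f^{\lambda^+}=f^{\nu^+}$. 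If you want to keep your bijective route you must actually exhibit the map (essentially Buch's uncrowding, transposed) and verify the strictness conditions in both directions; otherwise, the shortest repair is to cite Lenart's theorem and add the conjugation step, as the paper does.
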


\begin{proof} 
In the one-pointed case, Theorem \ref{t.euler-two-pointed-tableau} becomes
\[
\chi\left(\mathcal{O}_{G^{\bm{a}}_d(C,P)}\right)= (-1)^\rho \sum_{|\lambda^+/\lambda|=\rho}\zeta^{\lambda^+/\lambda} \cdot f^{\lambda^+},
\]
so we must identify the sum on the RHS with the number of $\rho$-standard set-valued tableaux on $\lambda$.

For any partition $\nu$, it follows from a theorem of Lenart  \cite[Theorem 2.2]{lenart} that\footnote{The result of Lenart has now been extended to skew shapes in \cite[Theorem 6.8]{cp}. By means of this extension, the argument in the proof of Corollary \ref{cor:CPconj} can be applied to prove the two-pointed version of the statement.
}
\[ 
\# \{ \text{$\rho$-standard set-valued tableaux on $\nu$}\}= \sum_{|\nu^+/\nu|=\rho}  g^{\nu^+/\nu} f^{\nu^+},
\]
where $g^{\nu^+/\nu}$ is the number of strict Young tableaux on $\nu^+/\nu$ whose entries in row $i$ are between $1$ and $i-1$, inclusive, and $f^{\nu^+}$ is the number of standard Young tableaux on $\nu^+$.  (To deduce this from Lenart's theorem, which writes the Grothendieck polynomial for $\nu$ as a sum of Schur polynomials, compare the coefficient of the monomial $x_1\cdots x_{|\nu|+\rho}$ on each side of his formula.)

Our claim follows by taking $\nu$ to be the conjugate partition $\lambda'$, i.e., the diagram obtained by reflecting across the diagonal, so that rows and columns are interchanged.  It is easy to see that $\rho$-standard set valued tableaux on $\lambda$ and $\nu$ are in bijection.  Similarly, standard Young tableaux on $\lambda^+$ and $\nu^+=(\lambda^+)'$ are also in bijection, so $f^{\lambda^+}=f^{\nu^+}$.  Finally, $\zeta^{\lambda^+/\lambda} = g^{\nu^+/\nu}$, for $\nu=\lambda'$ and $\nu^+=(\lambda^+)'$, because sending a tableau $T$ to its conjugate $T'$ defines a bijection from strict tableaux on $\lambda^+/\lambda$ with $i$-th row entries in $\{1,\ldots,\lambda^+_i-1\}$ to strict tableaux on $\nu^+/\nu$ with $i$-th row entries in $\{1,\ldots,i-1\}$.
\end{proof}

\subsection{The classical case}

Here there are no point conditions, and in the formulas one can take $\mu=\emptyset$, and let $\lambda = (g-d+r)^{r+1}$ be the rectangular shape.  Any partition $\lambda^+\supseteq \lambda$ of length $r+1$ can be written as $\lambda+\gamma$, for some partition $\gamma$ of length $r+1$.  The determinant $\zeta^{\lambda^+/\lambda}$ can therefore be written as
\begin{align*}
\zeta^{\lambda^+/\lambda} 
&=  \left| \frac{(g-d+r+\gamma_i+j-i-1)_{\gamma_i+j-i}}{(\gamma_i+j-i)!}\right|_{1\leq i,j\leq r+1},
\end{align*}
where $(n)_k = n(n-1)\cdots(n+1-k)$ is the falling factorial.  Manipulating the matrix leads to a factorization of this determinant as
\[
\left| \frac{1}{(\gamma_i+j-i)!}\right|_{1\leq i,j\leq r+1}\cdot \prod_{i=1}^{r+1} (g-d+r+\gamma_i-i)_{\gamma_i}.
\]
Applying this simplification to Theorem \ref{t.euler-two-pointed-tableau}, we obtain:
\begin{corollary} 
\label{c.euler-classical}
If $\dim G^r_d(C) = \rho(g,r,d)\leq g$, then 
\begin{align*}
\chi \left(\mathcal{O}_{G^r_d(C)}\right)&=
\chi \left(\mathcal{O}_{W^r_d(C)}\right) \\
&=\frac{(-1)^\rho}{\rho !} \sum_{|\gamma|=\rho} f^\gamma  \cdot \prod_{i=1}^{r+1} (g-d+r+\gamma_i-i)_{\gamma_i} \cdot f^{\lambda+\gamma} 
\end{align*}
where the sum is over partitions $\gamma=(\gamma_1\geq\cdots\geq \gamma_{r+1}\geq 0)$, and $\lambda+\gamma$ is the partition $(g-d+r+\gamma_1,\allowbreak\ldots,g-d+r+\gamma_{r+1})$.
\end{corollary}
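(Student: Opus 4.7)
The plan is to specialize Theorem~\ref{t.euler-two-pointed-tableau} to $\mu = \emptyset$ and $\lambda = N^{r+1}$, where $N := g-d+r$, and then reduce the resulting determinant.  Since $\mu = \emptyset$ forces $\mu^- = \emptyset$, we have $\alpha^{\emptyset/\emptyset} = 1$.  Partitions $\lambda^+ \supseteq \lambda$ of length $r+1$ with $|\lambda^+/\lambda| = \rho$ correspond bijectively to partitions $\gamma = \lambda^+ - \lambda$ of $\rho$ with at most $r+1$ parts, and then $f^{\lambda^+/\mu^-} = f^{\lambda+\gamma}$ and $(-1)^{|\lambda^+/\lambda|} = (-1)^\rho$.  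So Theorem~\ref{t.euler-two-pointed-tableau} collapses to
\[
  \chi(\OO_{G^r_d(C)}) = (-1)^\rho \sum_{|\gamma|=\rho} \zeta^{(\lambda+\gamma)/\lambda} \cdot f^{\lambda + \gamma},
\]
and the whole task becomes evaluating $\zeta^{(\lambda+\gamma)/\lambda}$.

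Using \eqref{eqn:L-formula} with $\lambda_j \equiv N$ and $\lambda^+_i = N + \gamma_i$, each entry of this determinant equals
\[
  \binom{N+\gamma_i+j-i-1}{\gamma_i+j-i} = \binom{n_i + j - 1}{N-1},
\]
where $n_i := N + \gamma_i - i$.  The main step is iterated column reduction: apply rounds of $C_j \mapsto C_j - C_{j-1}$ (processing $j$ from right to left, and fixing one additional left-most column after each round), so that Pascal's identity $\binom{m}{k} - \binom{m-1}{k} = \binom{m-1}{k-1}$ turns each column difference into a binomial with both indices decreased by one.  After $r$ such rounds, the matrix becomes $\big[\binom{n_i}{N-j}\big]_{i,j}$.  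This is the main technical obstacle: the individual operations are elementary, but identifying the correct reduction pattern and tracking which columns stabilize in which round is the heart of the argument.

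Rewriting the reduced entry as $\binom{n_i}{\gamma_i+j-i} = (N+\gamma_i-i)! / \big[(\gamma_i+j-i)!(N-j)!\big]$, one factors $(N+\gamma_i-i)!$ out of row $i$ and $1/(N-j)!$ out of column $j$, leaving $\det[1/(\gamma_i+j-i)!]$.  The combined row and column constants simplify, after re-indexing, as
\[
  \prod_{i=1}^{r+1}(N+\gamma_i-i)! \cdot \prod_{j=1}^{r+1} \frac{1}{(N-j)!} = \prod_{i=1}^{r+1} \frac{(N+\gamma_i-i)!}{(N-i)!} = \prod_{i=1}^{r+1}(N+\gamma_i-i)_{\gamma_i}.
\]
By \eqref{e.standard-tableaux} with $\mu = \emptyset$, the residual determinant equals $f^\gamma/\rho!$ (and by Fact~1 this vanishes unless $\gamma$ is a partition, so the sum effectively runs over partitions).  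Substituting back yields
\[
  \zeta^{(\lambda+\gamma)/\lambda} = \frac{f^\gamma}{\rho!} \prod_{i=1}^{r+1}(N+\gamma_i-i)_{\gamma_i},
\]
and the stated formula for $\chi(\OO_{G^r_d(C)})$ follows immediately.  Finally, the equality $\chi(\OO_{G^r_d(C)}) = \chi(\OO_{W^r_d(C)})$ is immediate from the last displayed equation at the end of \S\ref{GandW}: since $\mu = \emptyset$, the shape $\lambda/\emptyset$ is automatically skew, so $\pi_*[G^r_d(C)] = [W^r_d(C)]$ in connective K-theory and the Euler characteristics agree.
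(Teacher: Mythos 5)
Your overall strategy matches the paper's: specialize Theorem~\ref{t.euler-two-pointed-tableau} to $\mu=\emptyset$ and $\lambda=N^{r+1}$ with $N=g-d+r$, and factor the determinant $\zeta^{(\lambda+\gamma)/\lambda}$. The paper only says ``manipulating the matrix leads to a factorization,'' so you are supplying the omitted detail, and your reduction to $(-1)^\rho\sum_\gamma\zeta^{(\lambda+\gamma)/\lambda}f^{\lambda+\gamma}$ and your target identity $\zeta^{(\lambda+\gamma)/\lambda}=\frac{f^\gamma}{\rho!}\prod_i(N+\gamma_i-i)_{\gamma_i}$ are both correct. However, the determinant evaluation has a genuine gap. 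The rewriting $\binom{n_i+j-1}{\gamma_i+j-i}=\binom{n_i+j-1}{N-1}$ uses the symmetry $\binom{a}{b}=\binom{a}{a-b}$, which fails when the upper index $a=N+\gamma_i+j-i-1$ is negative: there the original entry is $0$ (its lower index $\gamma_i+j-i$ is then negative), whereas $\binom{a}{N-1}$ is a nonzero generalized binomial. Likewise the later step $\binom{n_i}{N-j}=n_i!/\bigl[(\gamma_i+j-i)!\,(N-j)!\bigr]$ requires $n_i=N+\gamma_i-i\geq 0$. These failures occur exactly when $N\leq r$ (i.e.\ $d\geq g$) and $\gamma$ has small parts in the lower rows --- a situation squarely within the corollary's hypotheses, and the one arising in the paper's own $g-d+r=1$ example. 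Concretely, for $g=3$, $r=1$, $d=3$, $\gamma=(1,0)$, the matrix for $\zeta^{(2,1)/(1,1)}$ has rows $(1,1)$ and $(0,1)$ (the $(2,1)$ entry is $\binom{-1}{-1}=0$), so $\zeta=1$; your rewritten matrix is all $1$'s with determinant $0$, and following your steps literally would give $\chi=0$ instead of the correct value $-2$.

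The gap is repairable rather than fatal: for fixed $\gamma$, both $\bigl|\binom{N+\gamma_i+j-i-1}{\gamma_i+j-i}\bigr|$ and $\frac{f^\gamma}{\rho!}\prod_i(N+\gamma_i-i)_{\gamma_i}$ are polynomials in $N$, your column-reduction argument is valid verbatim once $N\geq r+1$, and two polynomials in $N$ agreeing for all large $N$ agree identically. You should either insert this polynomiality remark or rework the manipulation with binomial conventions valid for all $N\geq 0$. The remaining parts of your argument (the collapse of the sum when $\mu=\emptyset$, the use of \eqref{e.standard-tableaux}, and the equality $\chi(\OO_{G^r_d(C)})=\chi(\OO_{W^r_d(C)})$ via the skewness of $\lambda/\emptyset$) are fine.
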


In low dimensions, the Euler characteristic can be written in a fairly simple closed form.

When $\rho(g,r,d)=0$, the formula in Corollary \ref{c.euler-classical} recovers Castelnuovo's count for the number of line bundles of degree $d$ with $r+1$ sections:
\[
N^r_{g,d}:= \chi \left(\OO_{G^r_d(C)}\right) = g!\prod_{i=0}^r \frac{i!}{(g-d+r+i)!}.
\]
When $\rho(g,r,d)=1$, we recover \cite[Theorem 4]{eh}:
\[
\chi \left(\OO_{G^r_d(C)}\right) = -\frac{(g-d+r)(r+1)}{g-d+2r+1} N^r_{g,d}.
\]
When $\rho(g,r,d)=2$, we obtain
\[
\chi \left(\OO_{G^r_d(C)}\right) = \frac{(r+1)^2(g-d+r)^2}{2(g-d+2r)(g-d+2r+2)} \,N^r_{g,d}.
\]
Finally, when $\rho(g,r,d)=3$, set $s:=g-d+r$, and  we have
\[
\chi \left(\OO_{G^r_d(C)}\right) = - \frac{(r+1)^2 s^2 \left[ \left((r + s + 1)^2 - 2 \right) s (r + 1) - 2 \right]}{6(s+r-1)(s+r)(s+r+1)(s+r+2)(s+r+3)} N^r_{g,d}.
\]

\section{Schubert and Grothendieck polynomials}
\label{Schub}

As one more application of the degeneracy locus formula of Theorem~\ref{t.degloci}, we deduce determinantal formulas for \textit{(double) Schubert} and \textit{Grothendieck polynomials} for 321-avoiding permutations. Indeed in this section, we identify our K-theory formulas with \textit{double} versions of the {\em flagged skew Grothendieck polynomials} recently introduced by Matsumura \cite{matsumura}.

For decreasing sequences $\bp =(p_1,\dots,p_t)$ and $\bq=(q_1,\dots,q_t)$, we defined partitions $\lambda$ and $\mu$  by
\begin{align*}
  \lambda_i &= q_i-t+i, &
  \mu_j &= p_{j}-(t+1-j)
\end{align*}
in \S\ref{ss.321}.
When $\bp$ and $\bq$ satisfy
\renewcommand{\theequation}{$*$}
\begin{equation}
 q_i \geq p_{i} -1 \quad \text{ for all }i,
\end{equation}
the partitions form a skew diagram $\lambda/\mu$, and we defined an associated permutation $w$ by setting
\[
  w(p_i) = q_i+1 \qquad\mbox{for $1\leq i\leq t$,}
\]
 and then filling in the remaining entries with the unused numbers in increasing order.  
As noted in \S\ref{ss.321}, this is a 321-avoiding permutation, and all 321-avoiding permutations arise this way.

\begin{remark} 
\label{r.labeled-skew} The above is equivalent to the bijection of 321-avoiding permutations with labeled skew tableaux of Billey-Jockusch-Stanley (\cite{bjs}), which can be re-formulated as follows. For a 321-avoiding permutation $w$, the skew shape $\sigma(w)$ considered in \cite{bjs} is a $180$ degree rotation of our skew shape $\lambda/\mu$, that is, $\sigma(w)=\eta/\tau$ where $\eta_i=\lambda_1-\mu_{t+1-i}$ and $\tau_i=\lambda_1-\lambda_{t+1-i}$.  Let $f_w=(f_1,f_2,\ldots,f_t)$ be the increasing sequence of indices $j$ such that $w(j)>j$, and let $e_i=w(f_i)-1$. Then the labeling $\omega(w)$ of the skew shape $\sigma(w)$ is obtained by placing the entries $e_i,e_i-1,\ldots, f_i$ in the $i$-th row of $\sigma(w)$ such that the entries increase by one in each column, and decrease by one in each row. In our setup, the labeling $\omega(w)$ is determined by $f_i=p_{t+1-i}$ and $e_i=q_{t+1-i}$.
\end{remark}

\renewcommand{\theequation}{\arabic{equation}}
\addtocounter{equation}{-1}

\subsection{Schubert polynomials}

For any permutation $w$, the \textit{double Schubert polynomial} $\mathfrak{S}_w(x,y)$ of Lascoux and Sch\"utzenberger is a canonical representative  for the cohomology class of the corresponding Schubert variety or degeneracy locus \cite{fulton-flags}.  Similarly, the \textit{double Grothendieck polynomial} $\mathfrak{G}_w(x,y)$ represents the structure sheaf of a Schubert variety or degeneracy locus in K-theory \cite[Theorem~3]{fl}.  
These polynomials are defined inductively, but for special types of permutations, one can give direct formulas.  Our goal here is to give such formulas for 321-avoiding permutations.

First we state the formula for Schubert polynomials.  Given sets of variables $x$ and $y$, let
\[
  c(i,j) = \frac{\prod_{a=1}^{q_i} (1-u y_a)}{\prod_{b=1}^{p_{j}} (1-u x_b)},
\]
and define $c_k(i,j)$ by collecting the coefficient of $u^k$ in the expansion of this rational function (in positive powers of $x$ and $y$).  
For example, if $y=0$, then $c_k(i,j)$ is the complete homogeneous symmetric polynomial $h_k(x_1,\ldots,x_{p_{j}})$ (for any $i$), and if $x=0$, then $c_k(i,j)$ is the elementary symmetric polynomial $(-1)^k e_k(y_1,\ldots,y_{q_i})$ (for any $j$).

\begin{corollary}
\label{c.double-schubert}
Let $w$ be a 321-avoiding permutation, with associated tuples $\bp,\bq$ satisfying \eqref{e.skew}, and let $\lambda/\mu$ be the corresponding skew Young diagram.  The double Schubert polynomial for $w$ has the following determinantal expression:
\[
  \mathfrak{S}_w(x,y) = \Delta_{\lambda/\mu}(c;0) = \left| c_{\lambda_i-\mu_j+j-i}(i,j) \right|_{1\leq i,j\leq t},
\]
where $c_k(i,j)$ is the polynomial in $x$ and $y$ defined above.
\end{corollary}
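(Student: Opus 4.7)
The plan is to specialize Theorem~\ref{t.deg}(\ref{t.deg-part2}) at $\beta=0$ — where it gives a formula in Chow cohomology rather than K-theory — applied to the universal Schubert variety for $w$. Concretely, I would take $X = \Fl(V)$ to be the full flag variety for an $n$-dimensional vector space $V$, with $n = p_1 + q_1$, equipped with the action of a maximal torus $T$. Let $E_\bullet$ be the tautological flag on $X$ and $F_\bullet$ a $T$-stable opposite flag. Working $T$-equivariantly, the ring $A^*_T(X)$ contains both sets of variables: $x$ as (the negatives of) the Chern roots of the quotients $E_p/E_{p-1}$, and $y$ as (the negatives of) the equivariant Chern roots of $F_q/F_{q-1}$, with signs chosen according to the standard convention for double Schubert polynomials.

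Next I would identify the degeneracy locus $W_{\bp,\bq} \subseteq X$ with the Schubert variety $X_w$ corresponding to the 321-avoiding permutation $w$. This is essentially the content of Lemma~\ref{l.w}: the rank conditions cutting out $W_{\bp,\bq}$ are exactly the rank conditions for $X_w$ (Fulton's essential set viewpoint), and the 321-avoiding hypothesis is precisely what makes $w$ the minimal permutation attached to these conditions. Since Schubert varieties have rational (hence Cohen--Macaulay) singularities and $\codim X_w = \ell(w) = |\lambda/\mu|$, the hypotheses of Theorem~\ref{t.deg}(\ref{t.deg-part2}) are satisfied.

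The $\beta=0$ specialization of Theorem~\ref{t.deg}(\ref{t.deg-part2}) then gives
\[
[X_w] \;=\; [W_{\bp,\bq}] \;=\; \bigl|\, c_{\lambda_i-\mu_j+j-i}(F_{q_i}-E_{p_j})\,\bigr|_{1\le i,j\le t}
\]
in $A^*_T(X)$. By the standard realization of double Schubert polynomials as equivariant Schubert classes \cite{fulton-flags}, the left-hand side is represented by $\mathfrak{S}_w(x;y)$. On the right-hand side, the splitting principle identifies
\[
\sum_{k\ge 0} u^k \, c_k(F_{q_i}-E_{p_j}) \;=\; \frac{\prod_{a=1}^{q_i}(1-u y_a)}{\prod_{b=1}^{p_j}(1-u x_b)},
\]
which is exactly the $c(i,j)$ defined in the corollary. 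Matching coefficients finishes the proof.

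The main obstacle is essentially bookkeeping: one must fix sign conventions so that the identification of the formal variables $x,y$ with equivariant Chern roots makes both the double Schubert polynomial $\mathfrak{S}_w(x;y)$ (on the geometric side) and the generating function for $c(i,j)$ (on the combinatorial side) come out consistently. No further geometric input is needed beyond Theorem~\ref{t.deg} and the standard correspondence between 321-avoiding permutations and skew shapes reviewed just above the statement.
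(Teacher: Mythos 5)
Your proposal is correct in substance, but it takes a genuinely different (though closely parallel) route from the paper. The paper's proof is a one-line specialization of Theorem~\ref{t.deg}(\ref{t.deg-part2}): it takes $X$ to be a \emph{product of two flag varieties}, with $E_\bullet$ tautological on one factor and $F_\bullet$ tautological on the other, so that $W_{\bp,\bq}$ is the universal double Schubert locus in $\Fl(V)\times\Fl(V)$ whose class is $\mathfrak{S}_w(x;y)$ by Fulton's theorem; the $x$ and $y$ variables are Chern roots of $E_{p_j}^*$ and $F_{q_i}^*$, and everything happens in ordinary (non-equivariant) Chow groups. You instead work on the single flag variety $\Fl(V)$ with one tautological flag and one fixed $T$-stable flag, and read off the $y$ variables equivariantly. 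That works, but note that Theorem~\ref{t.deg} is stated only for ordinary $CK_*$ of a variety, so your argument needs one extra (routine) step: either pass to approximation spaces $\Fl(V)\times_T U$ in the sense of Totaro/Edidin--Graham and apply the theorem there, or observe that without equivariance the fixed flag is a trivially filtered bundle and the $y$ variables collapse to zero, recovering only the single Schubert polynomial. The paper's product-of-flags setup sidesteps this entirely, since both flags vary and the second factor plays the role your torus plays. Your identification of $W_{\bp,\bq}$ with the Schubert variety via Lemma~\ref{l.w}, the codimension count $\ell(w)=|\lambda/\mu|$, the Cohen--Macaulayness, and the splitting-principle computation of $c(i,j)$ all match the paper's intent and are fine; the sign-convention bookkeeping you flag is indeed the only delicate point in either version.
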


\noindent
Since double Schubert polynomials are obtained by specializing double Grothendieck polynomials at $\beta=0$, the statement is a special case of Corollary~\ref{c.321-double-groth}, proved below.

This recovers a formula of Lascoux and Chen-Yan-Yang (see \cite{cyy}), which in turn generalized a formula of Billey-Jockusch-Stanley \cite{bjs} for the single Schubert polynomials of 321-avoiding permutations --- that is, the case $y=0$.  
More precisely, the matrices computing these formulas in \cite{cyy} are obtained by reflecting about the anti-diagonal the matrices computing the determinants in Corollary \ref{c.double-schubert}.  
The right-hand side is a {\em flagged double skew Schur function}, a variant of the flagged double Schur function introduced by Chen-Li-Louck \cite{cll}.  (``Flagging'' refers to the nested sets of variables appearing along rows and columns of the determinant: the $i$-th row uses $\{y_1,\ldots,y_{q_i}\}$, and the $j$-th column uses $\{x_1,\ldots,x_{p_j}\}$.) 

\begin{example}
\label{ex:31254}
An example of a 321-avoiding permutation which is not also vexillary (another class having determinantal expressions, thanks to an older theorem of Wachs) is $w=3\;1\;2\;5\;4$.  Here $\bp=(4,1)$ and $\bq=(4,2)$, so $\lambda = (3,2)$ and $\mu=(2,0)$, and the formula says
\begin{align*}
  \mathfrak{S}_{31254} &= \left|\begin{array}{cc} c_1(1,1) & c_4(1,2) \\ 0 & c_2(2,2) \end{array}\right| = c_1(1,1)\cdot c_2(2,2) \\
   &= (x_1+x_2+x_3+x_4-y_1-y_2-y_3-y_4)\!\cdot\!(x_1^2-x_1y_1-x_1y_2+y_1y_2).
\end{align*}

Comparing with \cite{bjs}, and using their notation, the labeled skew diagram $(\sigma(w)=\eta/\tau,\omega(w))$ associated to $w=3\;1\;2\;5\;4$ is given by:
\[
\ytableausetup{mathmode, boxsize=2em}
\begin{ytableau}
\none & 2 & 1\\
4& \none & \none
\end{ytableau}
\]
where $\eta=(3,1)$, $\tau=(1,0)$, $f=(1,4)$, and $e=(2,4)$. The matrix computing the determinant $  \mathfrak{S}_{31254}$ in \cite{bjs} is obtained by reflecting the above matrix  about the anti-diagonal.
\end{example}

\subsection{Grothendieck polynomials}

Now we turn to Grothendieck polynomials.  Here the variables should be identified as follows.  
Let
\[
  c(i,j) = \prod_{a=1}^{q_i}\prod_{b=1}^{p_j} \frac{(1+\beta y_a-u y_a)(1+\beta x_b)}{(1+\beta y_a)(1+\beta x_b-u x_b)}.
\]
The term $c_k(i,j)$ is obtained as before, by expanding and collecting the coefficient of $u^k$.

\begin{corollary}\label{c.321-double-groth}
Let $w$ be a 321-avoiding permutation, with associated tuples $\bp,\bq$ satisfying \eqref{e.skew}, and let $\lambda/\mu$ be the corresponding skew Young diagram.  The double Grothendieck polynomial for $w$ has the following determinantal expression:
\begin{align*}
  \mathfrak{G}_w(x,y) &= \Delta_{\lambda/\mu}(c;\beta) \\
  &=\left| \sum_{k\geq 0} \binom{\lambda_i-\mu_j+k-1}{k}\beta^k c_{\lambda_i-\mu_j+j-i+k}(i,j) \right|_{1\leq i,j\leq t},
\end{align*}
where $c_k(i,j)$ is the polynomial in $x$, $y$, and $\beta$ defined above.
\end{corollary}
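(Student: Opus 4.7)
The plan is to mirror the proof of Corollary~\ref{c.double-schubert} essentially verbatim, keeping $\beta$ formal rather than specializing to $0$. First I would take the base $X$ to be a product of two partial flag varieties --- one parametrizing flags of subbundles of ranks $\bp$ and the other flags of quotient bundles of ranks $\bq$ inside a fixed (sufficiently large) ambient vector space $V$. Pulling back the tautological bundles from each factor gives the required chain $E_\bullet$ of subbundles and $F_\bullet$ of quotient bundles; the map $\varphi$ is the composition $E_{p_1}\hookrightarrow V\twoheadrightarrow F_{q_1}$. The universal Schubert variety $W_{\bp,\bq}\subseteq X$ attached to $w$ has codimension exactly the length of $w$, which equals $|\lambda/\mu|$ by Lemma~\ref{l.w}, so the hypotheses of Theorem~\ref{t.deg}(\ref{t.deg-part2}) are satisfied.

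By definition, the double Grothendieck polynomial $\mathfrak{G}_w(x;y)$ is the representative of $[\OO_{W_w}]$ in the $K$-theory of $X$ that one obtains when $x_b$ and $y_a$ are taken to be the K-theoretic Chern roots of the duals of the tautological bundles. Applying Theorem~\ref{t.deg}(\ref{t.deg-part2}) in connective K-theory --- using the operational framework discussed in the Remark at the end of \S\ref{determinantalF}, which is unambiguous since $X$ is smooth --- yields
\[
  [\OO_{W_{\bp,\bq}}] \;=\; \Delta_{\lambda/\mu}(c;\beta)\cdot[X]
\]
with $c(i,j) = c(F_{q_i} - E_{p_j})$ the K-theoretic total Chern class series. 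Specializing at $\beta=-1$ produces $\mathfrak{G}_w(x;y)$ on the left-hand side.

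The remaining step is to identify $c(F_{q_i}-E_{p_j}) = c(F_{q_i})/c(E_{p_j})$ with the rational function in $x$ and $y$ stated in the corollary. For this one uses that in connective K-theory the total Chern class generating series of a line bundle $L$ with first Chern root $z$ is $c(L;u) = 1+uz$, and the multiplicative formal group law $F(x,y) = x+y+\beta xy$ has formal inverse $\iota(z) = -z/(1+\beta z)$. Hence the dual $L^{*}$ has Chern root $\iota(z)$, and its generating series is
\[
  c(L^{*};u) \;=\; 1 + u\iota(z) \;=\; \frac{1+\beta z - uz}{1+\beta z}.
\]
Multiplying over Chern roots of $E_{p_j}^{*}$ and $F_{q_i}^{*}$ reproduces exactly the expressions defining $c(E_{p_j})$ and $c(F_{q_i})$ in the statement, and extracting the coefficient of $u^k$ in the ratio $c(F_{q_i})/c(E_{p_j})$ gives $c_k(i,j)$. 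The corollary then follows by substituting into $\Delta_{\lambda/\mu}(c;\beta)$.

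The main obstacle, as in Corollary~\ref{c.double-schubert}, is bookkeeping around conventions: one must verify that the generating series $(1+\beta z-uz)/(1+\beta z)$ is indeed the one arising from the K-theoretic Chern classes appearing in Theorem~\ref{t.deg}, and that with these conventions the pullback of $[\OO_{W_{\bp,\bq}}]$ to the product of partial flag varieties really is the Lascoux--Sch\"utzenberger double Grothendieck polynomial $\mathfrak{G}_w(x;y)$. Both are standard. The $\beta=0$ specialization recovers Corollary~\ref{c.double-schubert}, serving as a sanity check.
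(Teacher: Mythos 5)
Your proposal is correct and follows essentially the same route as the paper: the paper also deduces the corollary from Theorem~\ref{t.deg}(\ref{t.deg-part2}) applied to tautological bundles on a product of two flag varieties, with the $x$ and $y$ variables taken as Chern roots of $E_{p_j}^*$ and $F_{q_i}^*$ in $CK^*(X)$; you have merely made explicit the formal-group-law computation identifying $c(F_{q_i}-E_{p_j})$ with the stated rational function. The only small slip is the phrase about ``specializing at $\beta=-1$'': the corollary is a statement about the $\beta$-deformed (connective K-theory) Grothendieck polynomial, so no specialization is needed, as your subsequent computation in fact shows.
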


\begin{proof}
This follows directly from Theorem~\ref{t.deg}(\ref{t.deg-part2}), by choosing a base and vector bundles so that there are no relations among the relevant Chern classes.

Here is one way to do this.  Let $\Fl(\bp,V)$ and $\Fl(V,\bq)$ be the partial flag varieties of  subspaces of dimesions $p_j$ and quotients of dimensions $q_i$, respectively, so they come with tautological bundles $E_{p_j}\subseteq V$ and $V \twoheadrightarrow F_{q_i}$.  Let $X=\Fl(\bp,V)\times \Fl(V,\bq)$, and identify variables $x$ and $y$ with Chern classes of the tautological bundles by writing
\[
 c(E_{p_j})=\prod_{b=1}^{p_j} \frac{1+\beta x_b - ux_b}{1+\beta x_b} \quad\mbox{and} \quad c(F_{q_i})=\prod_{a=1}^{q_i} \frac{1+\beta y_a - uy_a}{1+\beta y_a};
\]
that is, the $x$ variables are the Chern roots of $E_{p_j}^*$, the $y$ variables are the Chern roots of $F_{q_i}^*$, and we have $c(i,j) = c(F_{q_i}-E_{p_j})$.  
For any fixed degree $d$, one can take $\dim V$ sufficiently large so that there are no relations among the Chern classes of $E_{p_j}$ and $F_{q_i}$ in $CK^d(X)$.

Via the projection $X \to \Fl(V,\bq)$, one can regard $X$ as a (partial) flag bundle.  By Lemma~\ref{l.w}, the degeneracy locus $W_{\bp,\bq} \subseteq X$, defined by the conditions
\[
\dim \textrm{ker} \left(E_{p_j}\rightarrow F_{q_i} \right)\geq 1+i-j \qquad \mbox{for all $i,j$},
\]
is identified with the Schubert locus corresponding to $w$ in this flag bundle.  Now \cite[Theorem~3]{fl} says this locus is represented by the double Grothendieck polynomial $\mathfrak{G}_w(x,y)$ in K-theory, while Theorem~\ref{t.deg}(\ref{t.deg-part2}) says it is represented by $\Delta_{\lambda/\mu}(c;\beta)$.  Since there are no relations among the variables, we must have an equality of polynomials.
\end{proof}

We conclude with a tableau formula for the Grothendieck polynomials  $\mathfrak{G}_w(x) =\mathfrak{G}_w(x,0)$. As in Section \ref{svt-one-pointed},
a \textit{set-valued tableau} of skew shape $\lambda/\mu$ is a labelling of the boxes of $\lambda/\mu$ by finite non-empty subsets of $\mathbb{N}$ such that the maximum element of the label of any box $(i,j)$ is at most the minimum element of the label at $(i,j+1)$, and smaller than the minimum element of the label at $(i+1,j)$ (see \cite{b}).  Given a skew shape $\lambda/\mu$ and a flagging $f = (f_1, \ldots, f_t)$, a \emph{flagged skew set-valued tableau} of skew shape $\lambda/\mu$ with flagging $f$ is a set-valued tableau on $\lambda/\mu$ such that every entry in the $i$-th row is a subset of $\{1,2,\ldots,f_i\}$.  Let $FSVT(\lambda/\mu,f)$ denote the set of all such flagged skew tableaux. For a (flagged) set-valued tableau $T$, let  $x^T$ be the monomial in which the exponent of $x_i$ is the number of boxes of $T$ which contain $i$. 

\begin{corollary}
\label{c.m}
Let $w$ be a 321-avoiding permutation and let $\sigma(w)=\eta/\tau$ be the skew Young diagram with flagging $f_w$ corresponding to $w$ via the Billey-Jockusch-Stanley bijection, as in Remark~\ref{r.labeled-skew}.  The Grothendieck polynomial $\mathfrak{G}_w(x) =\mathfrak{G}_w(x,0)$ is equal to 
\begin{equation} 
\label{eqn:fsgp}
\sum_{T\in FSVT(\sigma(w),f_w)} \beta^{|T|-|\sigma(w)|} x^T.
\end{equation}
\end{corollary}

\begin{proof}
Matsumura \cite[\S4]{matsumura} defined \textit{flagged skew Grothendieck polynomials} to be generating functions of flagged set-valued tableaux given by  \eqref{eqn:fsgp}, and proved that they have determinantal expressions. 
Corollary~\ref{c.321-double-groth} also holds after the matrix is reflected about the anti-diagonal --- by replacing the $(i,j)$ entry with the $(t+1-j,t+1-i)$ entry --- since the determinant is unchanged by this operation.  The entries of this reflected matrix are equal to those in the determinantal formulas of \cite[\S4]{matsumura}, as explained in \cite[Remark~1.1]{a}.
\end{proof}

\subsection{Flagged set-valued skew tableaux and pipe dreams}

Corollary \ref{c.m} recovers the tableau formulas for Schubert polynomials of 321-avoiding permutations \cite[Theorem 2.2]{bjs} and for Grothendieck polynomials of Grassmann permutations \cite[Theorem 5.8]{kmy}.  The proofs of those formulas rely on writing Schubert and Grothendieck polynomials in terms of \emph{pipe dreams} (after \cite{km}) along with bijections between certain tableaux and pipe dreams (\cite[Theorem~2.2]{bjs}, \cite[Proposition~5.3]{kmy}).  To conclude, we give a bijection between flagged set-valued skew tableaux and pipe dreams for 321-avoiding permutations that extends these bijections and gives an alternative proof of Corollary \ref{c.m}.

For a flagged set-valued skew tableau $T\in FSVT(\sigma(w),f_w)$, let $\overline{T}$ be the flagged skew tableau obtained by taking the smallest element of each box of $T$ (see Figure \ref{fig:T}). 

\begin{figure}[htb]
\begin{center}
\begin{tabular}{cc} 
$T=$ 
\ytableausetup{mathmode, boxsize=2em}
\begin{ytableau}
\none & 1 & 1\\
23& \none & \none
\end{ytableau}
& \qquad\qquad
$\overline{T}=$ 
\ytableausetup{mathmode, boxsize=2em}
\begin{ytableau}
\none & 1 & 1\\
2& \none & \none
\end{ytableau}
\end{tabular}
\end{center}
\caption{For the permutation $w=31254$ in Example \ref{ex:31254}, and the given $T\in FSVT(\sigma(w),f_w)$, $\overline{T}$ is the associated flagged skew tableau.}
\label{fig:T}
\end{figure}

A \emph{pipe} dream is  a tiling of the fourth quadrant of the plane  by  \emph{crosses} $\textcross$ and \emph{elbows} $\textelbow$. A \emph{reduced pipe dream} (or rc-graph) for a  permutation $w$  is a tiling such that the pipe that starts at the beginning of the $i$-th row exits the top of the $w_i$-th column, with no two pipes of $P$ crossing each other more than once.  For a pipe dream $P$, we write $\overline{P}$ for the reduced pipe dream obtained by replacing all but the northeasternmost cross between two pipes by an elbow, and say that $P$ is a pipe dream for a permutation $w$ if $\overline{P}$ is a reduced pipe dream for $w$.

\begin{figure}[htb]
\begin{center}
\begin{tabular}{cc} 
 $P=  
\begin{array}{cccccc}
	& \perm1{} & \perm2{} &  \perm3{} &  \perm4{} &  \perm5{}\\
 		\petit1 & \+ & \+& \jr &  \jr & \jr \\
		\petit2 &   \jr & \jr & \+ & \jr  &  \jr   \\
		\petit3 & \jr  &  \+ & \jr & \jr  & \jr   \\
		\petit4 & \jr & \jr   & \jr &  \jr & \jr  \\
		\petit5 & \jr  &\jr  & \jr   &  \jr  & \jr
\end{array}$
&
 $\qquad\overline{P}=  \begin{array}{cccccc}
	& \perm1{} & \perm2{} &  \perm3{} &  \perm4{} &  \perm5{}\\
 		\petit1 & \+ & \+ & \jr & \jr&  \jr  \\
		\petit2 &   \jr & \jr & \+ & \jr  &  \jr   \\
		\petit3 & \jr  & \jr & \jr & \jr  & \jr   \\
		\petit4 & \jr & \jr   & \jr &  \jr & \jr  \\
		\petit5 & \jr  &\jr  & \jr   &  \jr  & \jr  
\end{array}$
\end{tabular}
\end{center}
\caption{A pipe dream $P$ and related reduced pipe dream  $\overline{P}$ for the permutation $w=31254$.}
\label{fig:P}
\end{figure}

Following the notation of Remark \ref{r.labeled-skew}, to $T\in FSVT(\sigma(w),f_w)$ we can associate a pipe dream $\Omega(T)$ as follows: 
\begin{quote} 
Place a cross in position
 $(i,\omega(b)-i+1)$ for each entry $i$ in a box $b$ of $\sigma(w)$, and an elbow in all other positions.
\end{quote}

Given a pipe dream $P$, let $x^P:=\prod_{(i,j)}x_i$, where the product is over crosses $(i,j)$ in $P$, and let $|P|$ be the total number of crosses. Fomin and Kirillov (\cite{fk}, \cite{fk2}; see also \cite{km} for the language of pipe dreams) show that 
\begin{equation}
\label{eqn:Gpipe}
 \mathfrak{G}_w(x) = \sum_P \beta^{|P|-\ell(w)}x^P
 \end{equation}
where the sum is over $P$ such that $\overline{P}$ is a reduced pipe dream for $w$. This specializes to \cite[Theorem 1.1]{bjs} when $\beta=0$. 

\begin{example}
For $T$ as in Figure \ref{fig:T}, we have crosses in exactly positions $(1,1),(1,2),(2,3)$, and $(3,2)$, so that $\Omega(T)$ is equal to the pipe dream $P$ of Figure~\ref{fig:P}. Similarly,  for $\overline{T}$ as in Figure~\ref{fig:T},  $\Omega(\overline{T})$ is equal to the reduced pipe dream $\overline{P}$ of Figure \ref{fig:P}.   Here,  $x^{\overline{T}}=x_1^2x_2=x^{\overline{P}}$ and $x^T=x_1^2x_2x_3=x^P$, where $P=\Omega(T)$ and $\overline{P}=\Omega(\overline{T})$.
\end{example}

\begin{proposition} \label{prop:bijection} Let $w$ be a 321-avoiding permutation and let $\sigma(w)=\eta/\tau$ be the skew Young diagram with flagging $f_w$ corresponding to $w$. Then the map $\Omega$ gives a weight-preserving bijection from $FSVT(\sigma(w),f_w)$ to the set of pipe dreams for $w$.
\end{proposition}

The map $\Omega$ generalizes the bijecton between flagged skew tableaux and reduced pipe dreams of $321$-avoiding permutations in \cite[Theorem 2.2]{bjs} and the bijection between flagged set valued tableaux and pipe dreams of Grassmann permutations in \cite[Proposition 5.3]{kmy}.

\begin{proof}   
By its definition the map $\Omega$ is an injection and specializes to the bijection between flagged skew tableaux and reduced pipe dreams for $w$. Therefore, if $P$ is a pipe dream for $w$, there is a flagged skew tableau $\overline{T}$ such that $\Omega(\overline{T})=\overline{P}$.  Since the proof of  \cite[Proposition~5.3(a)]{kmy} for ordinary shapes carries through for skew shapes, no pipe of $\overline{P}$ passes horizontally through one cross and vertically through another. (For straight shapes, the pipe dreams in \cite{kmy} and here differ by a reflection across the vertical axis.) 

We claim that if a horizontal and vertical pipe cross at a $\textcross$ and pass through a $\textelbow$ tile southwest of it, then the two tiles lie on the same anti-diagonal. This holds since if a pipe crosses horizontally at position $(i_0, j_0)$ it cannot cross any pipe vertically, hence to the west of $(i_0, j_0)$, the pipe is bounded to be at or above the anti-diagonal through $(i_0, j_0$). Similarly, if a pipe crosses vertically at $(i_0, j_0)$, it cannot cross  any pipe horizontally, hence going south of $(i_0, j_0$), the pipe is bounded to be at or below the anti-diagonal through $(i_0, j_0)$. If the two pipes also meet at an elbow, then that  elbow must lie on the anti-diagonal containing $(i_0, j_0)$.

The pipe dream $P$ is obtained from $\overline{P}$ by altering some such $\quad\jr\quad$ tiles to $\textcross$ tiles. This corresponds exactly to inserting extra entries in the box of $\overline{T}$ corresponding to the original  $\textcross$ tile of $\overline{P}$ . More specifically, let $b$ be a box in the labelled skew diagram $\sigma(w)$.  Let $i_0$ be the smallest entry of $b$.  Then for $i>i_0$, the additional entries of $b$ correspond to crosses in positions $(i,\omega(b)-i+1)$ in the pipe dream $\Omega(T)$ (these are southwest of the entry $(i_0, \omega(b)-i_0+1)$  in the anti-diagonal $\{(i,j)\,|\, i+j=\omega(b)+1\}$), and conversely. 

The bijection in \cite{bjs} and \cite{bb} between flagged skew tableaux  and reduced pipe dreams satisfies  $x^{\overline{T}}=x^{\Omega(\overline{T})}$.  By the description of the extra entries in fillings of $\overline{T}$, we conclude that $x^T=x^{\Omega(T)}$ and so $\Omega$ is a weight-preserving bijection.
\end{proof}

We observe that  $|\sigma(w)|=|\overline{T}|=|\Omega(\overline{T})|=l(w)$.  Comparing the summands in  \eqref{eqn:fsgp} and  \eqref{eqn:Gpipe}
 under  the above bijection, this shows that the generating function formulas \eqref{eqn:fsgp} and  \eqref{eqn:Gpipe}
 agree term by term, and therefore, this bijection gives an alternative proof of Corollary \ref{c.m}.

\providecommand{\bysame}{\leavevmode\hbox to3em{\hrulefill}\thinspace}
\providecommand{\MR}{\relax\ifhmode\unskip\space\fi MR }
\providecommand{\MRhref}[2]{%
  \href{http://www.ams.org/mathscinet-getitem?mr=#1}{#2}
}
\providecommand{\href}[2]{#2}

\end{document}